\renewenvironment{abstract}
{\small\vspace{-1em}
\begin{center}
\bfseries\abstractname\vspace{-.5em}\vspace{0pt}
\end{center}
\list{}{
\setlength{\leftmargin}{0.6in}%
\setlength{\rightmargin}{\leftmargin}}%
\item\relax}
{\endlist}
\declaretheorem[name=Theorem, numberwithin=section]{theorem}
\declaretheorem[name=Lemma, sibling=theorem]{lemma}
\declaretheorem[name=Corollary, sibling=theorem]{corollary}
\declaretheorem[name=Conjecture, sibling=theorem]{conjecture}
\declaretheorem[name=Problem, sibling=theorem]{problem}
\declaretheorem[name=Claim, sibling=theorem]{claim}
\def\cqedsymbol{\ifmmode$\lrcorner$\else{\unskip\nobreak\hfil
\penalty50\hskip1em\null\nobreak\hfil$\lrcorner$
\parfillskip=0pt\finalhyphendemerits=0\endgraf}\fi}
\newcommand{\cG}{\mathcal{G}}
\let\le\leqslant
\let\ge\geqslant
\let\leq\leqslant
\let\geq\geqslant
\title{Recolouring planar graphs of \\girth at least five}
\author[1]{Valentin Bartier}
\author[2]{Nicolas Bousquet\thanks{Supported by ANR project GrR (ANR-18-CE40-0032)}}
\author[1]{Carl Feghali\thanks{Supported by the Czech Science Foundation under research grant 19-21082S and by
the French National Research Agency under research grant ANR DIGRAPHS ANR-19-CE48-0013-01.}}
\author[3]{Marc Heinrich}
\author[4]{Benjamin Moore\thanks{Supported by a NSERC doctoral grant.}}
\author[2]{Théo Pierron}
\affil[1]{Univ. Lyon, ENS Lyon, CNRS, LIP, F-69342, Lyon Cedex 07, France}
\affil[2]{Univ. Lyon, Université Lyon 1, LIRIS UMR CNRS 5205, F-69621, Lyon, France}
\affil[3]{School of Computing, University of Leeds, UK}
\affil[4]{Charles University, Institute of Computer Science, Prague, Czech Republic}
\date{\today}
\begin{document}

\maketitle

\begin{abstract}
For a positive integer $k$, the $k$-recolouring graph of a graph $G$ has as vertex set all proper $k$-colourings of $G$ with two $k$-colourings being adjacent if they differ by the colour of exactly one vertex. A result of Dyer et al. regarding graphs of bounded degeneracy implies that the $7$-recolouring graphs of planar graphs, the $5$-recolouring graphs of triangle-free planar graphs and the $4$-recolouring graphs planar graphs of girth at least six are connected. On the other hand, there are planar graphs whose $6$-recolouring graph is disconnected, triangle-free planar graphs whose $4$-recolouring graph is disconnected and planar graphs of any given girth whose $3$-recolouring graph is disconnected.  

The main result of this paper consists in showing, via a novel application of the discharging method, that the $4$-recolouring graph of every planar graph of girth five is connected. This completes the classification of the connectedness of the recolouring graph for planar graphs of given girth. We also prove some theorems regarding the diameter of the recolouring graph of planar graphs. 
\end{abstract}

\section{Introduction and results}

Let $k \geq 1$ be an integer, and let $G = (V, E)$ be a graph. A \emph{(proper) $k$-colouring} of $G$ is a function $\sigma : V \rightarrow \{ 1,\ldots,k \}$ such that, for every edge $xy\in E$, we have $\sigma(x)\neq \sigma(y)$. 
The \emph{$k$-recolouring graph of $G$}, denoted by $\cG(G,k)$, is the graph whose vertices are the $k$-colourings of $G$, with two vertices being adjacent whenever the corresponding colourings differ on exactly one vertex.

The problem of determining whether $\cG(G,k)$ is connected is equivalent to that of determining whether the Glauber dynamics on the $k$-colourings of $G$ is ergodic (see e.g.~\cite{ChenDMPP19,frieze2007survey}). One of the earliest results in this direction is the following theorem by Dyer et al.~\cite{dyer2006randomly} and rediscovered by Cereceda et al.~\cite{Cereceda09}. For a positive integer $d$, a graph is $d$-degenerate if each of its subgraphs contains a vertex of degree at most $d$.

\begin{theorem}\label{thm:deg}
Let $k$ and $d$ be positive integers such that $k \geq d+2$. If $G$ is a $d$-degenerate graph, then $\cG(G,k)$ is connected. 
\end{theorem}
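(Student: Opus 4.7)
The plan is to prove the theorem by induction on $|V(G)|$. The base case is trivial: if $G$ has a single vertex, then $\cG(G,k)$ is a complete graph on $k \geq d+2 \geq 2$ vertices, hence connected. For the inductive step, I would use the fact that $d$-degeneracy gives a vertex $v$ with $\deg(v) \leq d$, and that $G' = G - v$ remains $d$-degenerate as a subgraph of $G$.

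Given two $k$-colourings $\alpha, \beta$ of $G$, their restrictions $\alpha', \beta'$ to $G'$ are proper $k$-colourings, so by the induction hypothesis there is a recolouring sequence $\alpha' = \sigma_0, \sigma_1, \ldots, \sigma_m = \beta'$ in $\cG(G', k)$. The goal is to lift this sequence to a walk in $\cG(G, k)$ from $\alpha$ to $\beta$ by tracking a valid colour for $v$ throughout, starting from $\alpha(v)$ and ending at $\beta(v)$ (the final transition for $v$, if needed, being a single recolouring step at the end).

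The heart of the argument is the lifting step. At each transition $\sigma_i \to \sigma_{i+1}$ a single vertex $u$ changes colour from some $c$ to some $c'$. If $u \notin N(v)$, or if $u \in N(v)$ but the current colour of $v$ is different from $c'$, then recolouring $u$ to $c'$ is also a valid move on $G$ and we are done. The problematic case is $u \in N(v)$ with $v$ currently coloured $c'$. Here I would insert one preliminary step: recolour $v$ to some $c''$ chosen to avoid both (i) all current colours on $N(v)$ in $\sigma_i$, and (ii) the incoming colour $c'$. Condition (i) forbids at most $|N(v)| \leq d$ colours; condition (ii) adds one more. Since $k \geq d+2$, such a $c''$ exists, and afterwards recolouring $u$ to $c'$ is safe. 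Each original transition is thus simulated by at most two recolouring steps on $G$, producing a walk in $\cG(G,k)$ from $\alpha$ to a colouring agreeing with $\beta$ on $G'$; a final recolouring of $v$ to $\beta(v)$ completes the walk.

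The main obstacle is precisely the accounting in the problematic case: one has to simultaneously respect the constraints imposed by the $d-1$ neighbours of $v$ other than $u$ (with their $\sigma_i$-colours), by the new colour $c'$ that $u$ is about to take, and by the current colour $c'$ of $v$ itself; this is where $k \geq d+2$ (and not merely $d+1$) becomes essential. Apart from this bookkeeping, the proof is a routine induction, and no further combinatorial ideas are required.
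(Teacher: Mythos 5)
Your proposal is correct. The paper does not actually include a proof of Theorem~\ref{thm:deg}—it is quoted from Dyer et al.\ and Cereceda et al.—but your argument is exactly the standard proof of this result: peel off a vertex $v$ of degree at most $d$ guaranteed by $d$-degeneracy, recolour $G - v$ by the inductive hypothesis, and lift that sequence by occasionally inserting a recolouring of $v$ to a colour avoiding both its at most $d$ neighbours' current colours and the one incoming colour, which uses $k \geq d+2$. Your bookkeeping of the forbidden colours in the problematic case is accurate, and your inductive invariant (that $v$ always holds a colour disjoint from its neighbours) is correctly maintained. It is worth noting that this lifting technique is precisely the idea that the paper formalises quantitatively in its Lemma~\ref{lem:bestchoice}, where the same argument is refined to bound how many times $v$ itself is recoloured; so your proof is not only correct but also consonant with the machinery the paper builds on top of this theorem.
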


The bound on $k$ in Theorem \ref{thm:deg} is best possible (for example, if $G$ is the complete graph on $d+1$ vertices, then $\cG(G,d+1)$ a collection of $(d-1)!$ isolated vertices). Perhaps surprisingly, this bound also turns out to be best possible for planar graphs, triangle-free planar graphs and planar graphs of girth at least six. Indeed, 
by Euler's formula, planar graphs are $5$-degenerate, triangle-free planar graphs are $3$-degenerate and planar graphs of girth at least six are $2$-degenerate. Thus, Theorem~\ref{thm:deg} implies that if $G$ is a planar graph, then $\cG(G,7)$ is connected. Similarly, $\cG(G,5)$ is connected if $G$ is triangle-free and planar, and $\cG(G,4)$ is connected if $G$ has girth at least six and planar. 

On the other hand, there is a planar graph $G$ and a $6$-colouring of $G$ where the closed neighbourhood of each vertex contains all $6$ colours \cite{BonamyB18} and hence this $6$-colouring forms an isolated vertex in $\cG(G,6)$, implying $\cG(G,6)$ is disconnected. For triangle-free planar graphs, consider the $4$-colouring of the cube $Q_3$ where all colours are used in every face. Again, this colouring forms an isolated vertex in $\cG(Q_3,4)$ and hence $\cG(Q_3,4)$ is disconnected. 
 For planar graphs of girth at least six, the $3$-colouring of a cycle $C$ whose length is a multiple of three where the colours $1, 2, 3$ alternate in a cyclic fashion also forms an isolated vertex in $\cG(C,3)$. This motivates the following question. 

\begin{problem}\label{prob:main}
What is the smallest integer $\kappa$ such that for every planar graph $G$ of girth five, the graph $\cG(G,\kappa)$ is connected?
\end{problem}

Note by our earlier discussion that $\kappa \in \{4, 5\}$ for Problem \ref{prob:main}. In the main contribution of this paper, we settle the problem by showing that $\kappa = 4$. 

\begin{theorem}\label{thm:4col-girth5}
For every planar graph $G$ of girth five, the graph $\cG(G,4)$ is connected.  
\end{theorem}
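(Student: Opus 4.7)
The plan is to proceed by induction on $|V(G)|$. Assume $G$ is a smallest counterexample: a planar graph of girth at least $5$ whose $4$-recolouring graph $\cG(G,4)$ is disconnected, with $|V(G)|$ minimum. The goal is to exhibit a \emph{recolouring-reducible configuration} in $G$, meaning a subgraph $H\subseteq G$ whose presence lets us reduce the connectedness question on $G$ to the corresponding question on a strictly smaller planar graph of girth at least $5$, contradicting minimality.

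The easiest reducible configuration is a vertex $v$ with $d(v)\le 2$: in any proper $4$-colouring $v$ has at least two available colours, so one can lift any recolouring sequence of $G-v$ by occasionally parking $v$ on an alternative available colour. Hence we may assume $\delta(G)\ge 3$. The more substantial work is to build a short catalogue of reducible configurations made of degree-$3$ vertices (for example, an edge between two degree-$3$ vertices, a short path whose internal vertices all have degree $3$, or a degree-$3$ vertex with a prescribed local environment). For each such configuration I would identify a small set $X$ of deletable vertices, apply the induction hypothesis to $G-X$ (which remains planar of girth $\ge 5$) to obtain a recolouring sequence between suitable extensions of the two target colourings, and then lift this sequence to $G$ by inserting a bounded number of additional recolouring steps on $X$ between consecutive steps of the sequence in $G-X$.

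Once the catalogue is fixed, the final step is a discharging argument showing that every planar graph of girth $\ge 5$ with $\delta\ge 3$ contains at least one of the listed configurations. Assign initial charges $\mu(v)=d(v)-4$ to each vertex and $\mu(f)=d(f)-4$ to each face; by Euler's formula the total charge is $-8$. Only degree-$3$ vertices carry negative initial charge ($-1$), since every face has degree $\ge 5$ and thus initial charge $\ge 1$. Discharging rules then send positive charge from faces (and from higher-degree vertices) to degree-$3$ vertices, and the key calculation is to check that, in the absence of any reducible configuration from the catalogue, every element of $G$ ends with nonnegative charge, contradicting the total of $-8$.

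The hard part will be proving reducibility for the configurations built from several interacting degree-$3$ vertices. Unlike in single-colouring extension theorems, one must lift an entire path in $\cG(G-X,4)$ rather than extend a single colouring. The delicate case arises when, during the recolouring of $G-X$, some vertex of $X$ becomes \emph{frozen} because its three neighbours happen to use three distinct colours, leaving a unique available colour that may disagree with the current or next target colour. Overcoming this requires choosing the configurations so that the boundary of $X$ in $G$ is rich enough for any such freezing to be undone by a short local sequence of recolourings (possibly via a controlled Kempe-chain-like swap confined to $X$ and its immediate neighbourhood). This is presumably where the ``novel application of discharging'' enters: the rules must be tailored not merely to guarantee the existence of a reducible configuration, but also to ensure that each configuration it produces comes with enough boundary flexibility to support the lifting of a full recolouring sequence, not only of a single colouring.
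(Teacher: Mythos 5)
Your skeleton is right --- minimal counterexample, show $\delta(G)\geq 3$, then discharging to force a reducible configuration --- and you correctly identify freezing of degree-$3$ vertices as the central obstruction. But there are two concrete gaps that would stop this line as written.

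First, you plan a \emph{finite} ``short catalogue'' of configurations built from degree-$3$ vertices (edges, short paths, etc.), and there is no reason to believe any of these is reducible. Deleting a single degree-$3$ vertex $v$ is already not clearly reducible: a recolouring of $G-v$ can freeze $v$ arbitrarily often, and parking $v$ requires interleaving extra moves that the neighbours of $v$ might in turn block. The paper circumvents this by reducing $5$-faces (not small sets of degree-$3$ vertices), and --- this is the genuinely novel step --- by working with an \emph{infinite, recursively defined} family of configurations: a vertex is ``bad'' for a $5$-face either if it has degree $3$, or if it has degree $4$ and the opposite $5$-face consists entirely of bad vertices, and so on. The reducible structures are then ``$5$-face with all vertices bad'' and ``$5$-vertex whose four incident $5$-faces have all bad boundary vertices.'' A finite catalogue cannot replace this: a degree-$4$ vertex on a $5$-face can only be controlled by controlling the arbitrarily long tree of opposite $5$-faces behind it, which is exactly what the inductive notion of badness and the auxiliary freezing lemmas (Lemmas~\ref{lem:frozen3}--\ref{lem:bad-frozen}) encode. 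Your discharging sketch with $\mu(v)=d(v)-4$, $\mu(f)=d(f)-4$ also sits awkwardly with this: in girth $5$, a degree-$3$ vertex can be surrounded by $5$-faces all of whose vertices have degree $3$, so the local surplus from faces alone does not suffice unless the rules send charge \emph{through} degree-$4$ vertices to opposite faces, which is precisely what requires the infinite bad-pair machinery. The paper's charging is $2d(v)-6$ and $\ell(f)-6$ for this reason.

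Second, you miss a free simplification that the paper exploits heavily: by Gr\"otzsch's theorem, $G$ has a proper $3$-colouring, so it suffices to connect any $4$-colouring $\alpha$ to a fixed $3$-colouring $\beta$. Having one endpoint use only three colours makes colour $4$ a safe ``parking'' colour in the final leg of the lift (see the ends of the proofs of Lemmas~\ref{lem:5-face} and~\ref{lem:config2-deg3}); without this, even the reducibility of the bad $5$-face is substantially harder. Your Kempe-chain idea does not appear in the paper and is not needed once these two ingredients are in place.
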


By our earlier discussion, Theorem \ref{thm:4col-girth5} completes the classification of the connectedness of the recolouring graph for planar graphs of given girth. 
The proof idea of Theorem \ref{thm:4col-girth5} is based on a new application of the discharging method, in which we allow an infinite number of reducible configurations rather than the usual finite number. More precisely, we first prove that a special type of $C_5$ is forbidden. We then show that any possible gluing of these $C_5$ in a tree-like structure is also forbidden. We then conclude with a very short and simple discharging argument;  see Section~\ref{sub:defs} for a more detailed outline. 
\medskip

On a slightly different track, the question of determining what is the diameter of $\cG(G,k)$ (when it is connected) also received considerable attention. In other words, how fast can we reach one $k$-colouring of $G$ from another by changing the colour of one vertex at a time? This question and, particularly, the following strengthening of Theorem \ref{thm:deg} conjectured by Cereceda \cite{Cereceda} have been the subject of much attention in recent years. 

\begin{conjecture}
\label{Cerecedaconjecture}
Let $k, d$ be positive integers, $k \geq d + 2$ and let $G$ be a $d$-degenerate graph on $n$ vertices. Then $\cG(G,k)$ has diameter $O(n^2)$. 
\end{conjecture}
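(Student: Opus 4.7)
The plan is to proceed by induction on $n$ using a degeneracy ordering $v_1,\dots,v_n$ in which each $v_i$ has at most $d$ neighbours among $v_{i+1},\dots,v_n$. Writing $G_i = G[\{v_i,\dots,v_n\}]$, I would fix two $k$-colourings $\alpha$ and $\beta$ of $G$, and try to recolour $\alpha$ into $\beta$ in three phases: first adjust the colour of $v_1$ so that it is compatible with the upcoming inductive step; then apply the induction hypothesis to $G_2$ to transform $\alpha|_{G_2}$ into $\beta|_{G_2}$ in $O((n-1)^2)$ moves; finally set $v_1$ to $\beta(v_1)$. Since $v_1$ sees at most $d$ colours on its neighbours and $k \ge d+2$, two colours are always available at $v_1$, so phases one and three are never blocked.

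The difficulty lies in phase two: any of the $O(n^2)$ moves inside $G_2$ that recolours a neighbour $u$ of $v_1$ may conflict with the current colour of $v_1$, forcing us to pre-shuffle $v_1$ to its other free colour. Iterated naively, this multiplies the recolouring length by a constant depending on $d$ at each level of recursion, and unrolling gives only $O(n^{d+1})$. To recover the conjectured $O(n^2)$, I would set up an amortised analysis via a potential function $\Phi$ that counts, roughly, the number of already-processed vertices whose current colour is ``blocking'' an upcoming move; the aim is to show that each move of the inductive step increases $\Phi$ by $O(1)$ and that $\Phi$ is always $O(n)$, so that the aggregate cost of the $v_1$-shuffles across the whole recursion is only $O(n)$.

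The main obstacle, and the reason this conjecture remains open for $d \ge 3$, is designing such a potential so that moves at deep levels of the recursion do not trigger unbounded cascades up through the higher-index vertices: a neighbour of $v_1$ recoloured at depth $\Theta(n)$ can in turn force a neighbour of $v_2$ to move, and it is precisely these chains that blow up the best general bound currently known. A plausible intermediate step is therefore to first establish the $O(n^2)$ diameter bound under additional structural assumptions where chain lengths are controlled (bounded treewidth or maximum degree, or the planar girth-$5$ setting to which Theorem~\ref{thm:4col-girth5} gives access); one can hope that the right potential, once isolated in such a subclass, will point to the correct general argument.
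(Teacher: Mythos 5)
This statement is Cereceda's Conjecture, which the paper states but does not prove; indeed the paper explicitly notes that it ``remains open even for $d=2$,'' and cites the best known bound $O(n^{d+1})$ from Bousquet and Heinrich. Your write-up is honest about this: you outline the natural inductive/degeneracy-ordering approach, observe (correctly) that the naive lifting multiplies the cost at each level and unrolls to $O(n^{d+1})$, and then say that one ``would'' need a potential function whose design is precisely the open problem. That is a description of the difficulty, not a proof; no potential $\Phi$ is actually constructed, and no bound on the cascade lengths is established. So there is a genuine gap --- in fact the entire content of the conjecture --- and your proposal does not close it. It is also worth being precise that Theorem~\ref{thm:4col-girth5} of the paper does not give you a quantitative foothold in the way your last paragraph suggests: that theorem only shows $\cG(G,4)$ is \emph{connected} for planar girth-$5$ graphs, with no polynomial diameter bound, so it cannot serve as an intermediate $O(n^2)$ result. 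The known partial results the paper cites in that direction (chordal, bounded treewidth, bipartite planar) are separate theorems of Bonamy--Johnson et al.\ and Bousquet--Heinrich, not consequences of the present paper's results.

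Since the paper offers no proof of this conjecture, there is no ``paper's approach'' to compare against; the most useful remark is simply that you have correctly identified why the $d$-degeneracy induction stalls, but have not supplied the missing amortisation argument, and should present this as a research direction rather than a proof.
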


This bound would be best possible \cite{BonamyJ12}. Although the conjecture has resisted several efforts, there have been some partial results surrounding it. The most important breakthrough comes from Bousquet and Heinrich \cite{BousquetH19+}, who show amongst other results, that $\cG(G,k)$ has diameter $O(n^{d+1})$. Nonetheless, Conjecture \ref{Cerecedaconjecture} remains open even for $d = 2$. 

When $k$ is substantially larger than $d$, Bousquet and Perarnau \cite{BousquetP16} gave the following bound.

\begin{theorem}\label{thm:bousquet}
Let $k$ and $d$ be positive integers, such that $k \geq 2d + 2$. If $G$ is a $d$-degenerate graph on $n$ vertices, then $\cG(G,k)$ has diameter $O(n)$. 
\end{theorem}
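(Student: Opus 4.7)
The plan is to exhibit, for any two proper $k$-colourings $\alpha,\beta$ of a $d$-degenerate graph $G$ on $n$ vertices with $k\ge 2d+2$, a walk of length at most $2n$ in $\cG(G,k)$ connecting them. The strategy is to construct a single intermediate proper $k$-colouring $\gamma$ and to realise both $\alpha \rightsquigarrow \gamma$ and $\gamma \rightsquigarrow \beta$ by recolouring each vertex of $G$ exactly once.

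I would start by fixing a degeneracy ordering $v_1,\dots,v_n$ of $V(G)$ in which each $v_i$ has at most $d$ \emph{back-neighbours} in $\{v_1,\dots,v_{i-1}\}$. The central counting fact is that, because $k\ge 2d+2$, the set
\[
S_i \;=\; \{1,\dots,k\}\;\setminus\;\bigcup_{v_j\ \text{back-neighbour of}\ v_i}\{\alpha(v_j),\beta(v_j)\}
\]
has at least two elements---call these the \emph{both-safe} colours at $v_i$. Suppose for a moment that we have constructed a proper colouring $\gamma$ with $\gamma(v_i)\in S_i$ for every $i$. Then the walk $\alpha\rightsquigarrow\gamma$ is obtained by sweeping in the \emph{reverse} order $v_n,v_{n-1},\dots,v_1$ and, at each step, recolouring $v_i$ from $\alpha(v_i)$ to $\gamma(v_i)$ in one move: the forward-neighbours of $v_i$ already sit at $\gamma$-values (compatible because $\gamma$ is proper) and its back-neighbours still sit at $\alpha$-values (compatible because $\gamma(v_i)\in S_i$ forbids back-neighbour $\alpha$-colours). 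The walk $\beta\rightsquigarrow\gamma$ is perfectly symmetric, using the $\beta$-half of the definition of $S_i$, and concatenating the first walk with the reverse of the second yields $\alpha\rightsquigarrow\beta$ in at most $2n$ steps, as required.

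The main obstacle is thus the existence of $\gamma$. I would build it greedily in the forward order $v_1,\dots,v_n$, picking at each $v_i$ a colour $\gamma(v_i)\in S_i$ with $\gamma(v_i)\neq\gamma(v_j)$ for each back-neighbour $v_j$ (so $\gamma$ remains proper). A na\"ive union bound treats $\{\gamma(v_j),\alpha(v_j),\beta(v_j)\}$ as three distinct forbidden colours per back-neighbour and only succeeds under $k\ge 3d+1$, which is weaker than what is available when $d\ge 2$. To close the gap down to $k\ge 2d+2$ one must exploit that, whenever feasible, $\gamma(v_j)$ can be chosen to coincide with $\alpha(v_j)$ or $\beta(v_j)$, collapsing each back-neighbour's contribution at $v_i$ from three distinct forbidden colours to two. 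The key lemma I would aim at proving is therefore the following: at every greedy step, either $S_i$ already contains a colour distinct from the at most $d$ back-neighbour $\gamma$-values, or else a controlled local revision of the $\gamma$-value of one back-neighbour (toggling it between $\alpha$- and $\beta$-values) restores the invariant and unblocks the current step. This careful interplay between the forward greedy construction and occasional local backtracking is the technical heart of the argument, and once established the overall linear diameter bound follows from the sweep above.
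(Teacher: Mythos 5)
Your reduction to a single intermediate colouring $\gamma$ is appealing, and the sweep step is correct: \emph{if} a proper $\gamma$ exists with $\gamma(v_i)\in S_i$ for every $i$, then sweeping $v_n,\dots,v_1$ gives $\alpha\rightsquigarrow\gamma$ in $n$ moves and similarly $\beta\rightsquigarrow\gamma$, for a total of at most $2n$. The gap, which you flag yourself, is the existence of $\gamma$, and it is not a formality: the ``key lemma'' you propose is false as stated. Take $d=2$, $k=6$, $V=\{v_1,\dots,v_5\}$ with edges $v_1v_2,v_1v_3,v_2v_3,v_2v_4,v_3v_4,v_3v_5,v_4v_5$ (a $2$-degenerate graph, and $v_1,\dots,v_5$ is a valid degeneracy ordering), and the proper colourings $\alpha=(1,3,2,4,1)$, $\beta=(2,4,1,3,2)$. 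Then $S_3=S_4=S_5=\{5,6\}$, yet $v_3v_4v_5$ is a triangle, so \emph{no} proper $\gamma$ with $\gamma(v_i)\in S_i$ exists, no matter how cleverly the greedy chooses or backtracks. Your toggle mechanism is also unavailable exactly when needed: here $\alpha(v_3)=2$ and $\beta(v_3)=1$, but $S_3=\{5,6\}$, so setting $\gamma(v_3)$ to an $\alpha$- or $\beta$-value destroys the invariant $\gamma(v_3)\in S_i$ that the whole sweep depends on. (In this small example one can rescue the construction by reordering so that $v_3$ comes first, but you neither claim nor prove that a good degeneracy ordering always exists, and that would be a further nontrivial lemma.)

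For comparison, the paper does not prove Theorem~\ref{thm:bousquet} itself (it is cited from Bousquet and Perarnau), but the paper's Lemma~\ref{lem:bestchoice} yields a short proof by a different and more robust route, avoiding the construction of $\gamma$ altogether. Induct on $|V(G)|$: pick a vertex $v$ with $d_G(v)\le d$, and by induction recolour $G-v$ from $\alpha_{\restriction (G-v)}$ to $\beta_{\restriction (G-v)}$ so that each vertex changes colour at most $d+1$ times. Then $N(v)$ changes colour at most $d_G(v)\cdot(d+1)$ times in total, and Lemma~\ref{lem:bestchoice} (applicable since $k\ge 2d+2\ge d_G(v)+2$, with $k-d_G(v)-1\ge d+1$) lifts the sequence while recolouring $v$ at most $\bigl\lceil d_G(v)(d+1)/(k-d_G(v)-1)\bigr\rceil+1\le d+1$ times. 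The invariant ``each vertex is recoloured at most $d+1$ times'' propagates, giving diameter at most $(d+1)n=O(n)$. Your approach would, if completed, give the sharper constant $2$, but as it stands the existence of $\gamma$ is unproved and the specific mechanism you sketch for it breaks on the example above.
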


We conjecture that the bound on $k$ in Theorem~\ref{thm:bousquet} can be lowered to $d+3$ and this would be best possible~\cite{BonamyJ12}.

\begin{conjecture}\label{conj:d+3}
Let $k, d$ be positive integers, $k \geq d + 3$ and let $G$ be a $d$-degenerate graph on $n$ vertices. Then $\cG(G,k)$ has diameter $O(n)$. 
\end{conjecture}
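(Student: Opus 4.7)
The plan is to attempt to sharpen Theorem~\ref{thm:bousquet} from $k \geq 2d+2$ down to $k \geq d+3$ through a more careful amortised analysis of recolouring sequences along a $d$-degeneracy ordering. Fix a $d$-degeneracy ordering $v_1, \ldots, v_n$ and source and target $k$-colourings $\sigma, \tau$. The strategy would be to process vertices in reverse order, transforming $\sigma$ into $\tau$ one vertex at a time, while allowing controlled temporary moves on vertices processed earlier. Since $k \geq d+3$, when only the neighbours $v_j$ of $v_i$ with $j<i$ are constrained (at most $d$ of them), the vertex $v_i$ has at least three colours available at any point; this is the slack we have to exploit.

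First, I would try to formalise a local rerouting lemma: if we wish to move a vertex $v_i$ to its target colour $\tau(v_i)$ but this colour is currently held by some earlier neighbour $v_j$, then we may temporarily shift $v_j$ using one of its at least three free colours. The hope is that each such move triggers only a constant number of temporary moves per vertex, yielding the desired $O(n)$ bound. To control the accounting, I would introduce a potential function $\Phi$ combining (i) the number of vertices currently miscoloured with respect to $\tau$ and (ii) a small ``debt'' carried by each unprocessed vertex that is released when a rerouting touches it. The aim is to show that each single-vertex move decreases $\Phi$ by at least a positive constant on average, so that the full recolouring sequence has length $O(n)$.

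The main obstacle, which is precisely why the conjecture has resisted proof, is the possibility of \emph{cascades} of forced recolourings: triggering a rerouting at one vertex may, when all three slack colours happen to be blocked by more distant fixed vertices, propagate through a long path of dependencies, making the amortised cost per move larger than $O(1)$. In the proof of Theorem~\ref{thm:bousquet}, the extra slack between $d+2$ and $2d+2$ is used precisely to short-circuit such cascades; shaving this gap down to one requires a new structural input. A natural candidate would be a Kempe-chain analysis on the bichromatic subgraphs induced by pairs of slack colours, exploiting $d$-degeneracy to argue that the relevant Kempe components are small enough on average to absorb the cost of each cascade and distribute it to the vertices it touches. Marrying such a Kempe analysis with the charging scheme above would close the argument, but producing the required structural bound on cascade lengths when only three free colours are available is the step where genuinely new ideas appear to be needed.
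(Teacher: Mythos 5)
This statement is Conjecture~\ref{conj:d+3}, not a theorem: the paper does not prove it. It is explicitly stated as an open conjecture, noted to be known only for outerplanar graphs and for $1$-degenerate graphs, with partial results (Bartier--Bousquet for chordal graphs with $k\geq d+4$, Dvo\v{r}\'ak--Feghali for planar graphs with $10$ colours, and Theorems~\ref{thm:5col-girth6} and~\ref{listrecolouring11} of this paper itself) recorded as evidence towards it. So there is no proof in the paper for you to match.

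Your proposal, to its credit, does not actually claim to close the conjecture: you sketch a potential-function and Kempe-chain strategy and then candidly flag the cascade problem as ``the step where genuinely new ideas appear to be needed.'' That self-assessment is accurate, and the obstacle you isolate is the real one. To be concrete about why the amortisation breaks down: with $k = d+3$ and exactly $d$ already-fixed earlier neighbours, a vertex has exactly three free colours at any moment, but nothing in degeneracy alone prevents all three from being simultaneously blocked by \emph{later} (not yet processed) neighbours, forcing a detour whose length you cannot bound locally. The bichromatic Kempe components you invoke need not be small in a $d$-degenerate graph (already for $d=2$ a single Kempe chain can span $\Theta(n)$ vertices), so without an additional structural hypothesis there is no uniform bound on cascade length, and the per-move amortised cost estimate has no foundation. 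In short, your proposal is a reasonable research outline but contains a genuine gap at exactly the point you identify; it should not be presented as a proof of the conjecture, and the paper does not supply one either.
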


To the best of our knowledge, Conjecture~\ref{conj:d+3} is only known to hold for outerplanar graphs \cite{BartierBH21} and $1$-degenerate graphs. For partial results, Bartier and Bousquet~\cite{BousquetB19} proved that $\cG(G,d+4)$ has diameter $O(n)$ for every $d$-degenerate chordal graph $G$ of bounded maximum degree, and Dvo\v{r}\'ak and Feghali  proved that $\cG(G,10)$ has diameter $O(n)$ for every planar graph $G$ \cite{dvovrak2020update, dvovrak2021thomassen}, and if $G$ is triangle-free, then so does $\cG(G,7)$ \cite{dvovrak2021thomassen}. 

As a second contribution in this paper, we confirm the conjecture for planar graphs of girth at least $6$, improving the bound on $k$ in Theorem~\ref{thm:bousquet} in this special case.

\begin{theorem}\label{thm:5col-girth6}
For every planar graph $G$ on $n$ vertices of girth at least $6$, $\cG(G,5)$ has diameter $O(n)$.  
\end{theorem}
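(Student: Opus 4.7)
The plan is to prove the theorem by induction on $n$, relying on the structure of planar graphs of girth at least $6$ to shave one colour off the Bousquet--Perarnau bound $k \ge 2d+2 = 6$ available from $2$-degeneracy alone. Given two $5$-colourings $\alpha,\beta$, I will identify a small \emph{reducible configuration} $C \subseteq V(G)$, transform $\alpha$ into a colouring that agrees with $\beta$ on $C$ using $O(1)$ moves confined to $N[C]$, then delete a vertex of $C$ to obtain a smaller planar graph of girth at least $6$ on which the inductive hypothesis applies. If each configuration costs $O(1)$ moves and removes at least one vertex, the total is $O(n)$.

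The structural half uses Euler's formula: a planar graph of girth at least $6$ satisfies $|E|\le \tfrac{3}{2}(n-2)$, so the initial charges $d(v)-3$ sum to at most $-6$, and a short discharging argument forces $G$ to contain one of a small family of light configurations (e.g.\ a vertex of degree at most $1$, two adjacent vertices of degree $2$, or a short path of $2$-vertices anchored at a low-degree vertex). Which family to choose is dictated by the local-fix step: every configuration in the family must admit an $O(1)$-step recolouring of $\alpha$ to match $\beta$ on $C$ using only five colours. Because every vertex of $C$ has degree at most $2$ in $G$, at most two colours are blocked by its outside neighbours, leaving three locally free colours; this is exactly the slack needed to reroute, when the target colour $\beta(v)$ of some $v\in C$ is currently used by a neighbour, through an auxiliary free colour and possibly through a shift of a partner of $v$ inside $C$. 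A case analysis over the finite family completes the local step.

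To close the recursion cleanly when $C$ is removed, it is natural to prove the slightly stronger list-recolouring statement in which each vertex $v$ carries a list $L(v)$ of at least five allowed colours; deleting a vertex of $C$ then simply shrinks the lists of its former neighbours by the colours newly fixed on $C$, and the inductive hypothesis applies to the smaller instance.

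The principal obstacle is synchronising the two halves. Unlike in the Bousquet--Perarnau regime, where $k \ge 2d+2$ makes a generic free colour available at every vertex, here only $d+3=5$ colours are available and the naive scheme fails. The girth-$6$ hypothesis therefore has to be used twice: once, globally, to guarantee via discharging that a reducible configuration always exists, and once, locally, to rule out short cycles of conflicting recolouring constraints inside $N[C]$ while the constant-length fix is being performed. Designing the family of configurations so that both uses go through simultaneously is the technical heart of the argument.
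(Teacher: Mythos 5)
Your plan captures the right global shape --- a discharging argument produces a light reducible configuration, and a constant-work local argument handles it --- but the way you close the induction does not go through, and it is instructive to compare it with how the paper glues the two halves together.

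The central difficulty is the ``fix then recurse'' step. You propose to first transform $\alpha$ so that it agrees with $\beta$ on the configuration $C$ using $O(1)$ moves confined to $N[C]$, then delete a vertex $v$ of $C$ and induct on $G - v$ with shrunk lists. This fails twice. First, the local fix need not exist: for $v\in C$ of degree $2$ with neighbours $u_1,u_2$, if $\beta(v)\in\{\alpha(u_1),\alpha(u_2)\}$ then $v$ cannot reach $\beta(v)$ without first moving $u_1$ or $u_2$, whose own constraints may propagate arbitrarily far into $G-C$; the girth hypothesis alone does not bound that chain inside $N[C]$. Second, the list-shrinking breaks the inductive hypothesis: after fixing $v$ at $\beta(v)$ and deleting it, each neighbour loses one colour and ends up with a list of size $4<5$, so the hypothesis (lists of size at least $5$) cannot be applied to the smaller instance. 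A degree-sensitive hypothesis such as $|L(u)|\ge\min(5,d(u)+3)$ does not rescue it, since a post-deletion neighbour of degree $3$ or more still needs a $5$-list but has only $4$. Your structural claim that every vertex of $C$ has degree at most $2$ in $G$ is also not what discharging gives for girth $6$: the unavoidable configuration is a $2$-degenerate $1$-island of order at most $12$, in which internal vertices may have larger degree, and the slack comes from the degeneracy order together with the ``at most one external neighbour'' property rather than a uniform degree bound of $2$.

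The paper avoids all three issues by reversing the order of operations. It never fixes $H$ at $\beta$ upfront. A minimal counterexample yields, by minimality (not a list induction), a recolouring sequence on $G-H$ with a bounded number of recolourings per vertex, and this sequence is \emph{lifted} to $G$ via Lemma~\ref{lem:bestchoice}: if all neighbours of $v$ change colour at most $C$ times, then $v$ needs to change at most $\lceil C/(k-d(v)-1)\rceil+1$ times. Adding the vertices of the $2$-degenerate $1$-island one at a time in a degeneracy order keeps the relevant degree at most $3$, so each level of the lift costs only a constant multiplicative blowup and the total stays bounded. This dispenses with freezing $C$, matching $\beta$ on $C$ early, and list shrinkage. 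If you want to salvage a ``fix then delete'' induction, you would need an invariant that survives both the deletion and the resulting loss of a colour at every neighbour, and the arithmetic above shows there is no room for that inside the $k=5$ budget.
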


To prove Theorem \ref{thm:5col-girth6}, we show that planar graphs of girth $\geq 6$ contain a $2$-degenerate $1$-island of constant size (see Section~\ref{sec:5col-girth6} for a definition), by using virtually the same arguments as in \cite{EsperetO16} for a similar result; we then simply show that such a configuration is reducible for the problem. 
We remark that it may be possible to improve Theorem~\ref{thm:5col-girth6} by showing that $\cG(G,4)$ has linear diameter (this would be best possible, since, as discussed, there exists planar graphs $G$ of girth $\geq 6$ such that $\cG(G,3)$ is disconnected).

As our final contribution, we address a recent conjecture by Dvo\v{r}\'ak and Feghali that generalizes their aforementioned result that $\cG(G,10)$ has diameter $O(n)$ for every planar graph $G$. In order to state the conjecture and our theorem, we require some further definitions. A \emph{list assignment} $L$ for $G$ is a function that, to
each vertex $v \in V$, assigns a set $L(v)$ of colours. An \emph{$L$-colouring of $G$} is (proper) colouring $\varphi$ of $G$ such that $\varphi(v) \in L(v)$ for each $v \in V$. For a list assignment $L$ of $G$, the \emph{$L$-recolouring graph of $G$}, denoted by $\cG(G,L)$, is the graph whose vertices are the $L$-colourings of $G$, with two vertices being adjacent whenever the corresponding colourings differ on exactly one vertex. Dvo\v{r}\'ak and Feghali \cite{dvovrak2021thomassen} made the following conjecture.

\begin{conjecture}\label{conj:listcol}
Let $G = (V, E)$ be a planar graph on $n$ vertices, and let $L$ be a list assignment for $G$ with $|L(v)| \geq 10$ for each $v \in V$.  Then $\cG(G,L)$ has diameter $O(n)$. 
\end{conjecture}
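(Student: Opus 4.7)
The plan is to adapt the framework of Dvořák and Feghali~\cite{dvovrak2020update,dvovrak2021thomassen}, who established that $\cG(G,10)$ has diameter $O(n)$ for every planar graph $G$, to the list setting. Their proof proceeds by a discharging argument exhibiting a bounded-size reducible configuration $S$ in every planar graph, together with a procedure that, using $O(1)$ recolouring steps concentrated in a neighbourhood of $S$, transforms any two $10$-colourings into colourings agreeing on $S$; induction on $|V(G)|$ then ``forgets'' $S$. I would follow this blueprint, replacing the uniform palette $\{1,\ldots,10\}$ by list assignments satisfying $|L(v)|\geq 10$.

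The first step is to prove a list analogue of the reducibility lemma: every planar graph $G$ with $|L(v)|\geq 10$ for all $v$ contains a bounded-size subset $S\subseteq V(G)$ such that for any two $L$-colourings $\alpha,\beta$ of $G$, one can bring $\alpha$ to a colouring agreeing with $\beta$ on $S$ using $O(1)$ recolourings confined to $N_G[S]$. The key quantitative input is that any vertex $v$ with $|L(v)|\geq 10$ and at most $8$ already-coloured neighbours has at least two free colours in its list, which provides the slack needed to perform local recolourings. Granted this lemma, the theorem follows by induction on $|V(G)|$, but the inductive hypothesis must be strengthened so that fixing the colours on $S$ and recursing on $G-S$ remains in the class: one works with planar graphs equipped with list assignments satisfying $|L(v)|\geq 10+\bigl|N_G(v)\cap (\text{already-removed vertices})\bigr|$, a condition preserved by the recursion since $|S|=O(1)$ and the list of each surviving neighbour of $S$ is shrunk by at most $|S|$.

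The main obstacle is that many of the reducibility arguments in the Dvořák-Feghali proof use Kempe-type colour swaps along paths, which require two colours to belong simultaneously to the lists of adjacent vertices along the swap; this synchronicity is automatic when all vertices share a common palette but can fail in the list setting. To overcome this, I would replace global Kempe chains by local, purely intra-$S$ recolourings: since $|S|=O(1)$ the candidate colours on each $v\in S$ live in the constant-size list $L(v)$, and a finite case analysis on the (constantly many) possible local configurations and boundary colourings should suffice to exhibit an $O(1)$-length recolouring sequence. A secondary technical point is that the list of reducible configurations produced by the original discharging argument may need to be enlarged to absorb the extra list-dependent case splits, and the discharging rules correspondingly revised; checking that planarity (via $\mad(G)<6$ when combined with girth hypotheses, or the analogous bound $\mad(G)<6$ in general) still forces the appearance of a list-reducible configuration in every planar graph is where I expect the bulk of the work to lie.
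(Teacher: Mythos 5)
You have been asked to prove Conjecture~\ref{conj:listcol}, but note that the paper itself does \emph{not} prove this statement: it is stated as an open conjecture (attributed to Dvo\v{r}\'ak and Feghali~\cite{dvovrak2021thomassen}). What the paper actually establishes is the weaker Theorem~\ref{listrecolouring11}, where the list-size bound is $11$ rather than $10$. So there is no proof in the paper to compare against; a correct submission here would either be a genuine new proof of the $10$-list case (which would improve on the paper) or an acknowledgment that the statement remains open.

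As for your proposal on its own terms: it is a high-level sketch, not a proof, and the steps that carry all of the difficulty are deferred. You correctly identify the central obstruction in moving from a uniform palette to lists (Kempe-style swaps require shared colours on adjacent vertices), but the remedy you offer---``replace global Kempe chains by local, purely intra-$S$ recolourings and do a finite case analysis''---is exactly the part that needs to be carried out, and the Dvo\v{r}\'ak--Feghali argument for $10$ colours uses Kempe chains in an essential way that does not obviously localize. Also, your proposed strengthened inductive hypothesis ``$|L(v)| \ge 10 + |N_G(v) \cap (\text{removed vertices})|$'' runs in the wrong direction: when you delete $S$ and fix its colours, the effective list of a surviving vertex \emph{shrinks} by the number of its precoloured neighbours in $S$, so the invariant you need is of the form $|L(v)| \ge 10 - (\text{number of precoloured neighbours})$; requiring the lists to get \emph{larger} is not something the recursion supplies. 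By contrast, the paper's proof of the $11$-colour list version (Theorem~\ref{listrecolouring11}) is considerably more elementary: it combines the basic lifting bound of Lemma~\ref{lem:bestchoice} with Borodin's theorem that a planar graph of minimum degree $5$ contains a triangle of degree-sum at most $17$, and a short minimal-counterexample argument bounding the number of recolourings per vertex by $100$. That machinery gives $11$ but not $10$, and closing the gap to $10$ in the list setting is precisely what is left open.
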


In the case where the number $10$ of colours is replaced by $12$, Conjecture~\ref{conj:listcol} follows from [\cite{BousquetH19+}, Theorem 1]. We improve this result as follows: 

\begin{theorem}
\label{listrecolouring11}
Let $G = (V, E)$ be a planar graph on $n$ vertices,  and let $L$ a list assignment for $G$ where $|L(v)| \geq 11$ for every $v \in V$. Then $\cG(G,L)$ has diameter $O(n)$.
\end{theorem}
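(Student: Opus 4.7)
My plan is to prove Theorem \ref{listrecolouring11} by induction on $n = |V(G)|$, showing that $\cG(G,L)$ has diameter at most $c n$ for some absolute constant $c$. The Bousquet-Heinrich framework (cited in the excerpt as the source of the $|L(v)| \geq 12$ bound) will provide the base machinery; the task is to save exactly one colour by exploiting planar structure beyond mere $5$-degeneracy. Concretely, I will seek a constant-size \emph{reducible configuration} $S \subseteq V(G)$ that is $s$-degenerate internally and has at most $t$ vertices with neighbours outside $S$, with $s + t \leq 9$ so that the required condition $|L(v)| \geq s + t + 2$ is implied by the hypothesis. The natural target is $s = 5,\, t = 4$, or a comparable combination.

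To establish the existence of such a configuration in every planar graph, I will use a discharging argument modelled on Esperet and Ochem~\cite{EsperetO16} (and on the proof of Theorem \ref{thm:5col-girth6}). I assign each vertex $v$ an initial charge $d(v) - 4$ and each face of length $r$ an initial charge $r - 4$; by Euler's formula the total charge is $-8$. I will then design discharging rules, through a case analysis focused on low-degree vertices and short faces, so that every vertex and face ends with non-negative charge unless $G$ contains the desired reducible configuration. Since the total charge is negative, the argument must find the configuration.

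Given the reducible configuration $S$, I construct the recolouring sequence from an arbitrary $L$-colouring $\alpha$ to another $L$-colouring $\beta$ in three phases. First, a prelude of $O(1)$ moves drives the vertices of $S$ to ``stable'' intermediate colours compatible with both $\alpha$ and $\beta$ on the boundary of $S$, using the $11$-list condition to guarantee such colours exist. Next, an inductively constructed sequence of $O(n - |S|)$ moves on $G - S$ transforms $\alpha|_{V(G) \setminus S}$ to $\beta|_{V(G) \setminus S}$ while keeping $S$ frozen. Finally, a postlude of $O(1)$ moves sets $S$ to match $\beta$. Summing the three phases across the recursion yields the claimed $O(n)$ diameter bound.

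The main obstacle is the discharging proof itself: the margin between the Bousquet-Heinrich threshold of $12$ colours and the target of $11$ is razor-thin, so the rules must extract just enough positive charge from the structure of planar graphs to force a reducible configuration, without any hypothesis beyond planarity. A secondary subtlety is list-size bookkeeping: freezing $S$ at intermediate colours reduces the effective lists of boundary vertices by up to $t$, so the hypothesis $|L(v)| \geq 11$ does not directly survive under the reduction. I would handle this by strengthening the induction statement to permit a bounded number of boundary vertices with lists of size $11 - O(1)$, and by verifying that each reduction step introduces at most $O(1)$ such new vertices while absorbing at least one, keeping the invariant stable across the recursion.
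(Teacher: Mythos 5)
Your proposal takes a genuinely different route from the paper and has gaps that would need substantial new ideas to close. The paper's proof is much simpler than what you are attempting: it cites a known theorem of Borodin (Theorem~\ref{borodinthm}: every planar graph of minimum degree $5$ contains a triangle $v_1v_2v_3$ with $d(v_1)+d(v_2)+d(v_3)\leq 17$), takes a minimal counterexample, rules out minimum degree $\leq 4$, two adjacent degree-$5$ vertices, and a $(5,6,6)$-triangle, then contradicts Borodin's theorem. Crucially, it never freezes the reducible configuration nor shrinks any lists: Lemma~\ref{lem:bestchoice} is used to add the three triangle vertices back one at a time, dynamically recolouring them whenever their already-reinstated or outside neighbours change colour, and the invariant maintained is only on the \emph{count} of recolourings per vertex (at most $100$), not on the colours themselves.

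Your ``prelude / frozen recursion / postlude'' architecture is where the argument actually breaks. Freezing $S$ while the inductive sequence on $G-S$ runs is not sound: nothing prevents that sequence from recolouring a neighbour of $S$ to a colour that conflicts with the frozen colouring of $S$. The only way to repair this is to delete the frozen colours from the lists of those neighbours, but then the recursive call is on a graph that no longer satisfies $|L(v)|\geq 11$, and the amount deleted from a given list equals the number of neighbours that vertex has in $S$ --- a quantity you have not bounded at all. Your suggestion to strengthen the induction to tolerate a few smaller lists is a strictly stronger theorem, and the one-line accounting of exceptional vertices (``introduces $O(1)$, absorbs at least one'') does not keep their count bounded nor does it obviously give a valid base case. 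Moreover, the quantitative target $s+t\leq 9$ with the condition $|L(v)|\geq s+t+2$ does not match the lifting arithmetic: when $S$ has more than a couple of vertices and they are reinstated one by one, the recolouring counts of their already-reinstated neighbours grow along the ordering, so to keep the per-vertex count constant you need a geometric-decay condition, roughly that $k-d(v_i)-1$ strictly exceeds the number of $v_i$'s ``growing'' neighbours, not merely $k\geq d(v_i)+2$. Finally, you never actually exhibit the configuration $S$ or carry out the discharging, so its existence in a minimal counterexample is asserted rather than shown; the paper sidesteps this entirely by delegating the structural work to Borodin's theorem.
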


Our proof of Theorem \ref{listrecolouring11} follows the same spirit as our proof of Theorem \ref{thm:5col-girth6}, by combining a structural result of Borodin for planar graphs with a standard reducible configuration argument. 

\begin{table}[h]
    \centering
\begin{tabular}{|c|l|l|l|l|}
\hline Girth/colours & $4$ & $5$ & $6$ & $7$  \\
\hline $3$ & $+\infty$ & $+\infty$ & $+\infty$ & $O(n^6)$~\cite{BousquetH19+}  \\
\hline $4$ & $+\infty$ & $O(n^4)$~\cite{BousquetH19+} & $O(n \log^3(n))$~\cite{Feghali19+} & $O(n)$ \cite{dvovrak2021thomassen}  \\
\hline $5$ & $<+\infty$ (Thm~\ref{thm:4col-girth5})& $O(n \log^2 n)$~\cite{Feghali19+} & -  & -   \\
\hline $6$ & $O(n^3)$~\cite{BousquetH19+} & $O(n)$ (Thm~\ref{thm:5col-girth6}) & - & -  \\
\hline $7+$ & $O(n \log n)$~\cite{Feghali19+} & - & - & - \\
\hline
\end{tabular}    
\label{table}
\caption{Existing and open cases for the diameter of the $k$-recolouring graph of planar graphs with girth $g$ for some combinations of values of $k$ and $g$. Note that any bound at a given position in the table implies the same bound at its right and below it.}
\end{table}

We end this section with a summary in Table \ref{table} of some of the results on the diameter of $\cG(G, k)$  for planar graphs $G$ of given girth $g$. We should remark that there is no known lower bound beating the (trivial) $\Omega(n)$ bound on the diameter of such $\cG(G, k)$ whenever it is connected, which suggests an interesting direction of research. 

\medskip
For more results on this topic or on reconfiguration versions of decision problems other than graph colouring, we refer the reader to the surveys by van den Heuvel~\cite{Heuvel13} and by Nishimura~\cite{Nishimura17}.

\section{The proofs of Theorems \ref{thm:5col-girth6} and \ref{listrecolouring11}}
\label{sec:5col-girth6}

We start with some definitions. For a subgraph $H$ of a graph $G$ and a colouring $\sigma$ of $G$, we denote by $\sigma_{\restriction H}$ the restriction of $\sigma$ to $H$. A colouring $\sigma'$ of $G$ is obtained from $\sigma$ by \emph{a single step recolouring}, denoted $\sigma \sim \sigma'$, if $\sigma$ and $\sigma'$ differ in the colour of exactly one vertex. Let $\alpha$ and $\beta$ be two colourings of $G$.  Given a sequence $\sigma$ of recolourings $ \alpha_{\restriction H} \sim \dots \sim \beta_{\restriction H}$ in $H$, we say that $\sigma'$ \emph{lifts} to a sequence $\sigma$ of recolourings  $\alpha  \sim \dots \sim \beta$ in $G$ if $\sigma _{\restriction H = \sigma'}$. For a recolouring sequence $\alpha = \alpha_1 \sim \alpha_2 \sim \dots \sim \alpha_m = \beta$, the  set of \emph{new colours} of $H$ is the set $\bigcup_{i = 2}^m\{\alpha_i(u): \alpha_i(u) \not= \alpha_1(u), u \in H\}.$

In order to prove Theorems \ref{thm:5col-girth6} and \ref{listrecolouring11}, we will use the following basic, yet powerful, lemma (whose proof is implicit in a number of papers). 


\begin{lemma}\label{lem:bestchoice}
Let $G$ be a graph and $v$ a vertex of $G$. Let $k$ and $C$ be positive integers, $k \ge d(v)+2$.  Let $\alpha$ and $\beta$ be $k$-colourings of $G$. Suppose that there is a recolouring sequence from $\alpha_{\restriction (G - v)}$ to $\beta_{\restriction (G - v)}$ that recolours the neighbourhood $N(v)$ of $v$ at most $C$ times. 
Then the sequence lifts to a sequence in $G$ from $\alpha$ to $\beta$ by at most $ \lceil \frac{C}{k-d(v)-1} \rceil+1$ recolourings of $v$. 
\end{lemma}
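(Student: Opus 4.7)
My plan is to lift the recolouring sequence greedily, keeping the colour of $v$ unchanged whenever possible and inserting a recolouring of $v$ only when it is forced. I would process the given sequence $\gamma_0 \sim \gamma_1 \sim \cdots \sim \gamma_m$ on $G-v$ (with $\gamma_0 = \alpha_{\restriction (G-v)}$ and $\gamma_m = \beta_{\restriction (G-v)}$) one step at a time, maintaining the invariant that the current colour $c$ of $v$ differs from every current colour used on $N(v)$. A step that recolours some vertex $u$ to colour $c^\star$ lifts directly to $G$ unless $u \in N(v)$ and $c^\star = c$; in that case, I first insert a recolouring of $v$ to a new legal colour. Since $v$ is properly coloured and $k \geq d(v)+2$, there are at least $k - d(v) - 1 \geq 1$ legal new colours for $v$, namely those avoiding $c$ and the at most $d(v)$ current neighbour colours.

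For the count, I would use a Belady-style look-ahead. Let $(e_j)_{j \leq t}$, with $t \leq C$, be the ordered subsequence of ``new colours of recoloured neighbours''. Each forced recolouring of $v$ occurs at some position $\tau_i$ in this subsequence; I choose $v$'s new colour there to be the legal colour whose next occurrence in the tail $(e_j)_{j > \tau_i}$ is as late as possible (with $+\infty$ if it never reappears). The $\geq k - d(v) - 1$ legal colours then yield $\geq k - d(v) - 1$ pairwise distinct next-occurrence positions in $(e_j)$, all strictly greater than $\tau_i$, so the chosen maximum is at least $\tau_i + (k - d(v) - 1)$. Consequently, consecutive forced times satisfy $\tau_{i+1} \geq \tau_i + (k - d(v) - 1)$, and combined with $\tau_1 \geq 1$ and $\tau_r \leq t \leq C$ this gives $r \leq \lceil C/(k-d(v)-1) \rceil$.

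After processing every step, the colouring of $G - v$ is $\beta_{\restriction (G-v)}$ but $v$ may still carry some colour $c^\ast \neq \beta(v)$; since $\beta$ is proper, exactly one extra recolouring of $v$ (to $\beta(v)$) is needed, yielding the claimed bound $\lceil C/(k-d(v)-1) \rceil + 1$ on the total number of $v$-recolourings. I do not anticipate a significant conceptual obstacle — the construction is routine and the Belady analysis is textbook. The only delicate point is the pigeonhole on next-occurrence positions: it produces a gap of $k-d(v)-1$ (and not $k-d(v)-2$) precisely because distinct colours have distinct next-occurrence positions in $(e_j)$, and this is what makes the amortized bound match the statement.
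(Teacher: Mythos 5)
Your proof is correct and yields the stated bound, but by a different construction from the paper's. The paper partitions the at most $C$ neighbour recolourings into consecutive blocks of size $t = k - d(v) - 1$ and recolours $v$ once per block, immediately before it, to a colour avoiding both the $\leq d(v)$ colours currently on $N(v)$ and the $\leq t$ colours about to appear inside the block (such a colour exists since $d(v) + t = k - 1 < k$); this costs $\lceil C/t \rceil$ anticipatory recolourings plus one more to reach $\beta(v)$. You instead recolour $v$ lazily, only when forced, breaking ties with Belady's furthest-in-future rule: the $\geq t$ legal colours have pairwise distinct next-occurrence positions in $(e_j)_{j > \tau_i}$, so the chosen one survives at least $t$ more entries of the stream, giving $\tau_{i+1} - \tau_i \geq t$; combined with $\tau_1 \geq 1$ and $\tau_r \leq C$ this yields $r \leq 1 + \lfloor (C-1)/t \rfloor = \lceil C/t \rceil$ forced recolourings, plus the final one to $\beta(v)$. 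Both proofs pivot on the same count of $\geq t$ usable colours at each moment $v$ is recoloured and match on the resulting bound; the paper's batching is slightly quicker to state, whereas your lazy rule makes transparent that the anticipatory recolourings in the paper's version are not actually necessary to hit the bound.
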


\begin{proof}

Since the total number of times the neighbours of $v$ change their colour is at most $C$, we can let $a_1, \dots, a_C$ be the new colours in $N(v)$ in order. Let $t = k - d_G(v) - 1$. We start by recolouring $v$ to a colour not in $\{a_1, \dots, a_t\}$, followed, in turn, by the first $t$ recolourings $a_1, \dots, a_t$ in $N(v)$. We then recolour $v$ to a colour not in $\{a_{t + 1}, \dots, a_{2t}\}$, followed by the next $t$ recolourings in $N(v)$; we continue this process until all $C$ recolourings in $N(v)$ have occurred. Notice that it is always possible to recolour $v$ since there are at least $t$ colours not appearing on $v$ or any of its neighbours.  Finally, we recolour $v$ to $\beta(v)$. Clearly, $v$ is recoloured at most $\lceil \frac{C}{t} \rceil+1$ times.
\end{proof}

To prove Theorem \ref{listrecolouring11}, we combine Lemma \ref{lem:bestchoice} with the following result of Borodin~\cite{Borodin}.


\begin{theorem}\label{borodinthm}
If $G$ is a planar graph with minimum degree $5$, then $G$ has a $3$-face $T$ with vertices $v_{1},v_{2},v_{3}$ such that $d(v_{1}) + d(v_{2}) + d(v_{3}) \leq 17$. 
\end{theorem}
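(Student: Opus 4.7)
I would argue by contradiction via the discharging method. Assume $G$ is a plane graph with $\delta(G) \ge 5$ in which every $3$-face has vertex-degree sum at least $18$. Assign initial charges $\mu(v) = d(v) - 4$ to each vertex and $\mu(f) = d(f) - 4$ to each face. Euler's formula yields $\sum \mu = -8$; under $\delta(G) \ge 5$, only $3$-faces carry negative charge (each equal to $-1$), every vertex has charge at least $1$, and every $\ell$-face with $\ell \ge 4$ has charge at least $0$.

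The natural first discharging rule is for each vertex $v$ to send $(d(v)-4)/d(v)$ along each of its $d(v)$ angles to the corresponding incident face. After this pass every vertex has charge $0$, and a face $f$ of length $\ell$ with incident vertex-degrees $d_1,\dots,d_\ell$ has new charge $\mu^{*}(f) = 2\ell - 4 - 4\sum_i 1/d_i$. For $\ell \ge 4$ this is non-negative since each $d_i \ge 5$. For a $3$-face, non-negativity is equivalent to $\sum_i 1/d_i \le 1/2$. A short case check over triples with $d_i \ge 5$ and $\sum d_i \ge 18$ shows that the only offending configurations are $(5,5,8)$, $(5,5,9)$, and $(5,6,7)$, with deficits $1/10$, $2/45$ and $4/105$ respectively.

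To cover these residual deficits I would add a second rule: each vertex $v$ with $d(v) \ge 6$ donates a small additional amount $\varepsilon(d(v))$ to every incident tight $3$-face, financed by slightly decreasing the amount it sends to its \emph{non-triangular} incident faces in the first pass. Non-triangular faces leave the first pass with surplus at least $4/5$, so a small uniform reserve does not push them negative; with suitable choices of $\varepsilon$ the three tight types are covered and every element of $V\cup F$ becomes non-negative, contradicting $\sum \mu = -8$.

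\textbf{Main obstacle.} The hard case is the $(5,5,8)$-triangle: its two $5$-vertices have no reserve to offer, so the entire deficit $1/10$ must come from the single $8$-vertex, which may a priori be incident to many tight $3$-faces at once. The key structural step I expect to need is a local bound, derived from the degree-sum hypothesis, on the number of tight $3$-faces around a given high-degree vertex — for instance, two $(5,5,8)$-faces sharing the $8$-vertex impose rigid adjacency constraints on the surrounding $5$-vertices and their common edges. Making this counting precise, and simultaneously calibrating the constants $\varepsilon(d)$ so that all three tight configurations are covered without destabilising the non-triangular faces from which the reserve is drawn, is where the bulk of the technical work will sit.
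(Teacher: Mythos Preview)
The paper does not prove this theorem; it is quoted as a result of Borodin and used as a black box in the proof of Theorem~\ref{listrecolouring11}. So there is no ``paper's proof'' to compare against, and your proposal stands or falls on its own merits.

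Your first pass is correct and standard: with charges $d(v)-4$ and $d(f)-4$ and the rule that $v$ sends $(d(v)-4)/d(v)$ to each incident face, every vertex reaches $0$, every $4^+$-face is non-negative, and the only negative triangles under the hypothesis $\sum d_i\ge 18$ are indeed $(5,5,8)$, $(5,5,9)$, $(5,6,7)$ with the deficits you computed. The genuine gap is your second pass. You propose that a vertex of degree $\ge 6$ finance the extra donation to tight triangles ``by slightly decreasing the amount it sends to its non-triangular incident faces.'' But nothing in the hypotheses prevents an $8$-vertex $v$ from having \emph{all eight} incident faces triangular, each of type $(5,5,8)$: the neighbours $u_1,\dots,u_8$ then form an $8$-cycle of $5$-vertices, and every constraint (minimum degree $5$, degree-sum $\ge 18$ on every triangle, planarity) is locally satisfied. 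In that situation $v$ has already spent its entire charge $4$ in the first pass, it has no non-triangular face to borrow from, and it is asked for an additional $8\cdot\frac1{10}=\frac45$. Your ``Main obstacle'' paragraph correctly flags this, but the remedy you sketch---a local bound on the number of tight $3$-faces at a high-degree vertex---is not proved and, in the naive form, is not true: the degree-sum hypothesis alone does not cap this number.

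Borodin's actual argument does not try to finance the deficit solely from the high-degree vertex of a tight triangle. One workable route is to let the degree-$5$ vertices in a $(5,5,8)$ or $(5,5,9)$ face pull charge from the faces on the \emph{other side} of their incident edges: since a $5$-vertex $u$ in such a triangle has two neighbours of degree $5$, the opposite face across the edge $uu'$ (with $d(u)=d(u')=5$) is either a $4^+$-face with surplus $\ge\frac45$, or a triangle whose third vertex has degree $\ge 8$ and hence has its own reserve; one then carefully routes small amounts of charge across these edges. Getting the constants to close for all three tight types simultaneously is exactly the delicate bookkeeping in Borodin's paper, and it cannot be replaced by the single-step local rule you describe.
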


We say that $(G, \alpha, \beta)$ is a counterexample to Theorem \ref{listrecolouring11} if
\begin{itemize}
    \item $G$ is a planar graph,
    \item $\alpha$ and $\beta$ are $L$-colourings of $G$, where $L$ is as in Theorem \ref{listrecolouring11}, and
    \item $\alpha$ cannot be transformed to $\beta$ by at most $100$ recolourings per vertex.
\end{itemize}
A counterexample $(G, \alpha, \beta)$ to Theorem \ref{thm:5col-girth6} is said to be \emph{minimal} if $|V(G)|$ is minimum amongst all counterexamples to Theorem \ref{thm:5col-girth6}.

\begin{proof}[Proof of Theorem \ref{listrecolouring11}]

Let $(G, \alpha, \beta)$ be a minimal counterexample to Theorem \ref{listrecolouring11}. 

We first claim that $G$ has minimum degree at least $5$. Otherwise, $G$ has a vertex $v$ with degree at most $4$. By minimality, there is a recolouring sequence from $\alpha_{\restriction ({G-v})}$ to $\beta_{\restriction ({G-v})}$ where each vertex gets recoloured at most $100$ times. By Lemma \ref{lem:bestchoice}, this sequence lifts to a sequence in $G$ by at most $68$ recolourings of $v$, a contradiction.

We claim that $G$ does not have two vertices $v_{1}$ and $v_{2}$ such that $v_{1}v_{2} \in E(G)$ and the degrees of $v_{1}$ and $v_{2}$ are precisely $5$. Otherwise, by minimality, there is a recolouring sequence from $\alpha_{\restriction(G-\{v_{1}, v_{2}\})}$ to  $\beta_{\restriction (G-\{v_{1}, v_{2}\})}$ where each vertex gets recoloured at most $100$ times. By Lemma \ref{lem:bestchoice}, this sequence lifts to a sequence in $G-v_{1}$ by recolouring $v_{2}$ at most $68$ times. By the same lemma,  the latter sequence lifts to a sequence in $G$ by at most $95$ recolouring of $v_{1}$, which is a contradiction. This proves the claim.  

A similar argument can be applied to show that $G$ has no triangle $v_{1},v_{2},v_{3}$ such that $v_{1}$ has degree $5$ and both $v_{2}$ and $v_{3}$ have degree $6$ (consider the graphs $G-\{v_{1}, v_{2}, v_{3}\}$, $G - \{v_1, v_2\}$, $G - v_1$ and $G$ in order). Indeed, $v_3$ is recoloured $68$ times, $v_2$ is recoloured $95$ times and finally $v_1$ is recoloured at most $96$ times.

To complete the proof, by Theorem \ref{borodinthm}, $G$ contains a triangle $T$ with vertices $v_{1},v_{2},v_{3}$ such that $d(v_{1}) + d(v_{2}) + d(v_{3}) \leq 17$. As $G$ does not contain an edge where both vertices have degree $5$, it implies that up to relabelling, $d(v_{1}) = 5$, $d(v_{2}) = 6$ and $d(v_{3}) = 6$, which contradicts the preceding argument. 
\end{proof}


The proof of Theorem \ref{thm:5col-girth6} also follows from Lemma \ref{lem:bestchoice}, but will require some more work. 


Given a graph $G$ and an induced subgraph $H$ of $G$, we say that $H$ is a \textit{$2$-degenerate $1$-island} if the following hold:
\begin{itemize}
    \item each vertex $v \in V(H)$ has at most one neighbour in $G-H$, and
    \item there exists an ordering $v_{1},\ldots,v_{|V(H)|}$ of the vertices of $H$ such that for each $i \in \{2,\ldots,|V(H)|\}$, the vertex $v_{i}$ has at most two neighbours in the graph $G - H + \{v_{1},\ldots,v_{i-1}\}$.
\end{itemize}

We say that $(G, \alpha, \beta)$ is a counterexample to Theorem \ref{thm:5col-girth6} if $G$ is a planar graph of girth at least $6$ and $\alpha$ and $\beta$ are $5$-colourings of $G$ that cannot be transformed to one another by at most $6143$ recolourings per vertex. A counterexample $(G, \alpha, \beta)$ to Theorem~\ref{thm:5col-girth6} is \emph{minimal} if $|V(G)|$ is minimum amongst all counterexamples to Theorem~\ref{thm:5col-girth6}. 

It turns out that a minimal counterexample cannot contain a $2$-degenerate $1$-island of order at most $12$. 

\begin{lemma}\label{lem:island}
Let $(G, \alpha, \beta)$ be a minimal counterexample to Theorem \ref{listrecolouring11}. Then $G$ does not contain a $2$-degenerate $1$-island $H$ where $|V(H)| \leq 12$.
\end{lemma}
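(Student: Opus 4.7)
The plan is to derive a contradiction by assuming $G$ contains such a $2$-degenerate $1$-island $H$, and then building a recolouring sequence in $G$ from $\alpha$ to $\beta$ that uses at most $6143$ recolourings per vertex, thereby contradicting the counterexample assumption. I would first apply the minimality of $(G, \alpha, \beta)$ to $G' := G - V(H)$: this subgraph is planar with girth at least $6$ and has strictly fewer vertices, so by minimality $(G', \alpha_{\restriction G'}, \beta_{\restriction G'})$ is not a counterexample, yielding a recolouring sequence $\sigma'$ from $\alpha_{\restriction G'}$ to $\beta_{\restriction G'}$ that recolours each vertex at most $6143$ times.

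Let $v_1, \ldots, v_m$ (with $m \leq 12$) be a $2$-degenerate $1$-island ordering, and set $G_i := G' \cup \{v_1, \ldots, v_i\}$, so $G_0 = G'$ and $G_m = G$. The $1$-island property gives that $v_i$ has at most one neighbour in $G'$, and the $2$-degeneracy ordering gives $d_{G_i}(v_i) \leq 2$ for every $i \geq 1$. I would then iteratively apply Lemma \ref{lem:bestchoice} with $k = 5$ (so $k - d(v_i) - 1 \geq 2$) to lift $\sigma'$ through $G_1, G_2, \ldots, G_m = G$, modifying at each step only the recolouring count of the newly added $v_i$.

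The quantitative core of the argument is to track $M_i := \max_{j \leq i} c_j$, where $c_j$ is the recolouring count of $v_j$ in the final sequence. The worst case for $v_i$ (with $i \geq 2$) is when $v_i$ has exactly one neighbour in $G'$ (whose count is bounded by $6143$) and exactly one neighbour in $\{v_1, \ldots, v_{i-1}\}$ (whose count is bounded by $M_{i-1}$), in which case Lemma \ref{lem:bestchoice} gives $c_i \leq \lceil (6143 + M_{i-1})/2 \rceil + 1$; the alternative (both neighbours inside $H$) yields only $c_i \leq M_{i-1} + 1$, which is smaller whenever $M_{i-1} \leq 6143$. For $i = 1$, since $v_1$ has no earlier vertex as a neighbour, the lemma gives $c_1 \leq \lceil 6143/3 \rceil + 1 = 2049$. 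Setting $d_i := 6143 - M_i$, the recurrence rewrites as $d_i + 2 = (d_{i-1} + 2)/2$, so $d_i = 2^{13-i} - 2$; in particular $d_{12} = 0$, giving $M_{12} \leq 6143$. Hence every vertex of $G$ is recoloured at most $6143$ times in the lifted sequence, contradicting the counterexample assumption.

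The main obstacle will be the bookkeeping for the recurrence: one must verify that the worst-case scenario described above is indeed dominant throughout the lifting, and that the constant $6143$ in the definition of counterexample has been chosen precisely so that the $m \leq 12$ lifting steps exactly saturate the budget. The geometric decay of the slack $d_i$ explains the otherwise mysterious choice $6143 = 3 \cdot 2^{11} - 1$, and any looser analysis of Lemma \ref{lem:bestchoice} (for instance ignoring that the neighbour in $G - V(H)$ never sees its count inflated during the lifting) would fail to match the threshold.
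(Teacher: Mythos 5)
Your proposal is correct and matches the paper's argument step for step: you both delete $V(H)$, apply minimality to $G - V(H)$, re-add the island vertices one at a time using the $2$-degeneracy ordering, and apply Lemma~\ref{lem:bestchoice} at each step with $k - d(v_i) - 1 \geq 2$ (or $\geq 3$ for $v_1$) to track the recolouring budget $c_i$ (the paper) or $M_i$ (your notation) via the recurrence $c_i = \lceil (6143 + c_{i-1})/2 \rceil + 1$ starting from $c_1 = 2049$. Your closed-form $d_i = 2^{13-i}-2$ for the slack and the resulting explanation of the constant $6143 = 3\cdot 2^{11} - 1$ is a nice clarifying addition, but the underlying estimate is identical.
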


\begin{proof}
Suppose towards a contradiction that $G$ contains a $2$-degenerate $1$-island $H$ with $|V(H)| \leq 12$. Let $t = |V(H)|$, and let $v_{1},\ldots,v_{t}$ be an ordering of $V(H)$ such that for all $i \in \{2,\ldots,t\}$ the vertex $v_{i}$ has at most two neighbours in the graph $G- H + \{v_{1},\ldots,v_{i-1}\}$. By minimality, there is a recolouring sequence $\sigma'$ in $G-H$ from $\alpha_{\restriction ({G-H})}$ to $\beta_{\restriction ({G-H})}$ that recolours every vertex at most $6143$ times. We  lift $\sigma'$ to a recolouring sequence $\sigma$ from $\alpha$ to $\beta$ in $G$ as follows. Let $c_{1} = 2049$, and for $i \geq 2$, let $c_{i} := \lceil \frac{6143 + c_{i-1}}{2}\rceil + 1 $. Note that $c_{12} = 6143$ and $c_{i+1} \geq c_{i}$. 

To prove the lemma, it suffices to show that $\sigma'$  lifts to $\sigma$ by recolouring each $v_{i}$ at most $c_{i}$ times. We proceed by induction on $i$. For $i =1$, since $v_{1}$ has at most one neighbour in $G-H$, Lemma \ref{lem:bestchoice} implies that $v_{1}$ is recoloured at most $2049= c_{1}$ times. Suppose  $i \in \{2,\ldots,t\}$. Since $H$ is a $2$-degenerate $1$-island,  $v_{i}$ has at most one neighbour in $G-H$ and at most two neighbours in $G - H + \{v_{1},\ldots,v_{i-1}\}$, the neighbours of $v_{i}$ are recoloured at most $c_{i-1} + 6143$ times. By Lemma \ref{lem:bestchoice}, $\sigma'$ lifts to a recolouring sequence of $G - H + \{1, \dots, v_i\}$ by recolouring $v_{i}$ at most $c_{i}$ times, as desired. 
\end{proof}

To finish the proof of Theorem \ref{thm:5col-girth6}, it suffices to show, by Lemma \ref{lem:island}, that every planar graph of girth at least $6$ contains a $2$-degenerate $1$-island $H$ where $|V(H)| \leq 12$. This follows nearly immediately from the proof of Theorem 8 in~\cite{EsperetO16}. We include the proof for completeness since the result stated in~\cite{EsperetO16} is slightly weaker (but holds for more general surfaces). 

\begin{lemma}
Every planar graph of girth at least $6$ contains a $2$-degenerate $1$-island $H$ where $|V(H)| \leq 12$.
\end{lemma}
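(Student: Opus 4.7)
I would prove the lemma by a discharging argument that mirrors the proof of Theorem 8 in \cite{EsperetO16}. Assume for contradiction that $G$ is a planar graph of girth at least $6$ containing no $2$-degenerate $1$-island on at most $12$ vertices, so every induced subgraph $H$ with $|V(H)|\le 12$ fails either the outside-degree condition or the ordering condition.

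The first step is to translate this assumption into a list of small \emph{forbidden local configurations}. For instance, $G$ has minimum degree at least $2$ (else a vertex of degree $\le 1$ is itself a $1$-island of size $1$); no two adjacent degree-$2$ vertices (size-$2$ island); no degree-$3$ vertex with two degree-$2$ neighbours (the three form, thanks to girth $\ge 6$, an induced path satisfying the $1$-island and ordering conditions, giving a size-$3$ island); and so on. Iterating this reasoning up to $12$ vertices and systematically using the girth assumption to exclude short cycles among the relevant vertices, one builds a rich catalogue of reducible clusters of low-degree vertices that $G$ cannot contain.

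Next I would run the standard Euler discharging. Assign initial charge $d(v)-4$ to each vertex $v$ and $\ell(f)-4$ to each face $f$; by Euler's formula the total charge is $-8$, and since the girth is at least $6$ every face starts with charge at least $2$. I would then design redistribution rules moving charge from faces (and possibly from high-degree vertices) to the degree-$2$ and degree-$3$ vertices, and verify that every face and every vertex ends with non-negative charge. Summing yields total charge $\ge 0$, contradicting $-8$.

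The main obstacle will be calibrating the discharging rules with the list of forbidden configurations, so that any vertex which would end with negative charge certifies a $2$-degenerate $1$-island of size at most $12$. To rescue a degree-$2$ vertex $v$, charge has to be pulled through its neighbourhood of low-degree vertices, and if the flow has to extend too far then the low-degree vertices along the way already induce such an island. The specific bound $12$ should drop out precisely from bounding how long such a chain of low-degree vertices can be before one is forced to certify an island of size at most $12$, exactly as in the argument of \cite{EsperetO16}; the only adaptation needed relative to their statement is to check that the subgraph produced satisfies our precise $2$-degenerate $1$-island definition rather than the slightly weaker notion they use in the surface-general setting.
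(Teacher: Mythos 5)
Your high-level plan -- a discharging argument modelled on the proof of Theorem 8 of \cite{EsperetO16}, starting from the observation that a vertex of degree at most $1$ is itself a $1$-island and building up forbidden low-degree configurations -- is exactly the strategy the paper uses, and your choice of charge $d(v)-4$, $\ell(f)-4$ is a valid renormalisation of the paper's $2d(v)-6$, $d(f)-6$. However, the proposal stops precisely where the actual work begins, and what is missing is not routine.

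Concretely, the paper does \emph{not} enumerate "a rich catalogue of reducible clusters" of size up to $12$; it needs only two forbidden configurations: (A1) a path on at most $12$ vertices, all of degree at most $3$, whose endpoints have degree $2$; and (A2) a cycle on at most $12$ vertices, all of degree at most $3$, with at least one of degree $2$. The crucial ingredient you leave undetermined -- the discharging rule -- is a nonlocal \emph{facial-walk} rule: from any incidence $(v,f)$ one walks along the boundary of $f$ in a chosen direction through consecutive degree-$3$ vertices; if at least five inner vertices of degree $3$ are encountered the face sends $\frac12$ to $v$, otherwise the first vertex of degree $\neq 3$ at the end of the walk sends $\frac12$ to $v$. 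This rule is what lets charge "travel" along chains of degree-$3$ vertices, and its soundness is exactly what (A1) and (A2) certify: if the walk were to terminate badly one would already have an island. Without specifying some such rule and carrying out the case analysis on $d(v)\in\{2,3,4,5,\geq6\}$ and $d(f)\in\{6,\geq7\}$ (the cases $d(v)=4,5$ and $d(f)\geq7$ require the $1$-island assumption again, not just (A1)/(A2)), the argument is a sketch rather than a proof. You explicitly acknowledge the calibration as "the main obstacle"; in this lemma that calibration \emph{is} the proof, so the gap is genuine even though the intended route is correct.
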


\begin{proof}
Let $G$ be a vertex minimal counterexample. Then $G$ has minimum degree $2$; if not, the vertex of degree $1$ is a $2$-degenerate $1$-island. A similar argument can also be applied to show that $G$ does not contain

\begin{itemize}
\item[(A1)]  a path with at most $12$ vertices each of degree at most $3$ where the endpoints have degree $2$;
\item [(A2)] a cycle $C$ with at most $12$ vertices where all vertices in $C$ have degree at most $3$ and at least one vertex of $C$ has degree $2$. 
\end{itemize}

We now proceed via the classical discharging method. For  every $v \in V(G)$, let $\text{ch}(v) = 2d(v) -6$, and for every  face  $f$ in $G$, let $\text{ch}(f) = d(f) -6$. By Euler's formula, $\sum_{x \in V \cup F} \text{ch}(x) = -12$. 

 For every face $f$ in $G$, choose an orientation of $f$ and call it \emph{positive}, and let the other orientation be called \emph{negative}. We now describe a procedure by which charges are redistributed. For any vertex $v$, for any face $f$ incident to $v$ and for any orientation of $f$, take a maximal facial walk of $f$ 
starting at $v$ and going around $f$ in the prescribed orientation of $f$ such that the inner vertices of the walk have degree precisely $3$. Let $u$ be the other end vertex of the walk (note that $u=v$ is a possibility). Our discharging rules are as follows:

\begin{itemize}
\item If the walk has at least $5$ inner vertices, $f$ sends a charge of $\frac{1}{2}$ to $v$. 
\item Otherwise, $u$ sends a charge of $\frac{1}{2}$ to $v$.
\end{itemize}

 By the maximality of the walk, if the second case occurs, then by (A1) and (A2) $u$ has degree at least $4$ and there are no paths where both endpoints have degree $2$ and all internal vertices have degree at least $3$. 
 
To reach a contradiction, we show that the final charges $\text{ch}^*(f)$ and $\text{ch}^*(v)$ of each face $f$ and vertex $v$ is non-negative.
\medskip

\noindent
\textbf{Case 1: $d(v) =2$}. \\
 In this case, $v$ appears four times in the union of all boundary walks of faces of $G$ (for each face, we consider a boundary walk in the positive orientation and a boundary walk in the negative orientation of the face). Therefore $v$ receives a charge of $4 \times \frac{1}{2} = 2$, and thus $\text{ch}^*(v) = -2 + 2 \geq 0$, as required. 
 \medskip
 
 \noindent
\textbf{Case 2: $d(v) = 3$}. \\ 
Vertices of degree $3$ start with $0$ charge and do not give any charge, hence they end up with a final charge of $0$.
\medskip

\noindent
\textbf{Case 3: $d(v) \geq 4$}. \\
Consider the facial walks through which $v$ gives a charge of $\frac{1}{2}$ to  some vertices of degree $2$. Note that there are at most two such walks for each of the $d(v)$ faces incident to $v$. Therefore we have $\text{ch}^*(v)\geqslant 2d(v)-6-2\times d(v)\times\frac12=d(v)-6$, which is non-negative when $d(v)\geqslant 6$.

To handle the remaining cases, we first claim that if a neighbour $u$ of $v$ is immediately before $v$ in more than one such walk, then $u$ has degree $2$. Assume by contradiction that $u$ has degree $\geq 3$ and is just before $v$ in two facial walks ending at $v$, then by the above procedure $u$ must have degree exactly $3$ and there are two paths starting at $u$ each containing at most three inner vertices (each of degree $3$) and finishing at a vertex of degree $2$ (if there were more than three inner vertices, then together with vertices $u$ and $v$, it would give at least $5$ inner nodes, a contradiction since we assumed that we cannot apply the first discharging rule). It follows that $G$ contains a path on at most $9$ vertices each having degree at most $3$ and with its endpoints having degree $2$, contradicting (A1). 

Let us now complete the proof of Case 3. 
If $d(v) = 5$, then $v$ cannot be adjacent to at least $4$ vertices of degree $2$ since otherwise $v$ together with these vertices form a $2$-degenerate $1$-island of size at most $5$. Therefore, by the claim, $v$ gives a charge of at most $3+2\times \frac 12=4$, implying $\text{ch}^*(v)\geq 0$. 

Suppose  $d(v) =4$. Then a similar argument as above can be applied to show that 
\begin{itemize}
\item $v$ is not adjacent to more than $2$ vertices of degree $2$, 
\item  if $v$ is adjacent to two vertices of degree $2$, then $v$ does not give
any charge through its neighbours of degree at least $3$,
\item  if $v$ is adjacent to one vertex of degree $2$, then $v$ does not give any charge through more than one neighbour of degree at least $3$

\end{itemize} 

In each case, we obtain $\text{ch}^*(v)\geq 0$. 
Lastly, if $v$ has no neighbour of degree $2$, then 
by the claim, $\text{ch}^*(v) \geq 2-4\times 1/2\geq 0$. This completes Case 3. 
\medskip

\noindent
\textbf{Case 4: $d(f) = 6$}.\footnote{The degree of a face being the number of vertices incident to it.} \\ In this case, no vertex receives any charge from $f$, since otherwise $f$ has a vertex of degree $2$ and five vertices of degree $3$, which implies that the face is a $2$-degenerate $1$-island of order $6$. 
\medskip

\noindent
\textbf{Case 5: $d(f) \geq 7$}. \\ For each vertex of degree $2$ that receives a charge of $\frac{1}{2}$ from $f$ in the positive orientation, let $A^{+}(v)$ be the set consisting of the five vertices of degree exactly $3$ following $v$ in the positive orientation of $f$ (notice that these vertices exist). We define $A^{-}(v)$ similarly for the negative orientation of $f$. Then $A^{+}(v)$ and $A^{-}(u)$ are pairwise disjoint, otherwise there is a $2$-degenerate $1$-island of size at most $11$. Therefore each face $f$ sends a charge of at most $2 \times \frac{1}{2}\times \lfloor \frac{d(f)}{6} \rfloor$, and thus as $d(f) \geq 7$, we get that $\text{ch}^*(f) \geq 0$. This completes Case 5 and hence the proof of the theorem. 
\end{proof}

\section{The proof of Theorem \ref{thm:4col-girth5}}
\label{sec:4col-girth5}
\subsection{Outline of the proof}
\label{sub:defs}
We say that $(G, \alpha, \beta)$ is a \emph{special counterexample} to Theorem \ref{thm:4col-girth5} if

\begin{itemize}
    \item $G$ is a planar graph of girth $5$,
    \item $\alpha$ is a $4$-colouring of $G$ and $\beta$ is a $3$-colouring of $G$, and
    \item there is no sequence of recolourings from $\alpha$ to $\beta$.
\end{itemize}

Note that every special counterexample is indeed a counterexample to Theorem~\ref{thm:4col-girth5}. Moreover, because of the celebrated Grötszch's Theorem \cite{grotzsch1959} (stating that every triangle-free planar is $3$-colourable), if there is no special counterexample, then Theorem~\ref{thm:4col-girth5} holds. Therefore, in order to prove Theorem~\ref{thm:4col-girth5}, it is sufficient to show that there is no special counterexample. For clarity, from now on, we will omit the adjective ``special''.


We say that a counterexample $(G, \alpha, \beta)$ is \emph{minimal} if $|V(G)| \leq |V(H)|$ for any other counterexample $(H, \alpha', \beta')$. We prove Theorem~\ref{thm:4col-girth5} by contradiction, showing that no minimal counterexample exists. 

For a face $f$ of $G$, the set of vertices incident with $f$ is denoted $V(f)$.
Let $v$ be a vertex of $G$ of degree exactly $4$ and let $f$ be a $5$-face incident with $v$. A $5$-face $f' \not=f$ of $G$ is \emph{opposite to $f$ with respect to $v$} if $V(f) \cap V(f') = \{v\}$ (note that since $v$ has degree $4$ and $G$ has girth $5$, $f'$ is well defined). We say that a vertex $u$ incident with a $5$-face $f$ is \emph{bad for $f$} if either \begin{itemize}
\item $u$ has degree $3$,  or 
\item $u$ has degree $4$ and $f$ has an opposite $5$-face $f'$ with respect to $u$ such that each vertex in $V(f') - \{u\}$ is bad for $f'$. 
\end{itemize}

If $v$ is bad for $f$, we say that $(v,f)$ is a \emph{bad pair}. Observe that bad pairs are well-defined inductively, and that the definition gives a natural quasi-order on bad pairs.

To prove Theorem~\ref{thm:4col-girth5}, we show that every minimal counterexample cannot contain two infinite families of forbidden structures, namely:
\begin{itemize}
    \item a 5-face $f$ where all vertices are bad for $f$ (Lemma~\ref{lem:config1})
    \item a 5-vertex $v$ adjacent to four 5-faces $f_1,f_2,f_3,f_4$ such that each vertex of $f_i$ (except $v$) is bad for $f_i$ (Lemma~\ref{lem:config2}).
\end{itemize}
We then apply a simple discharging argument to reach a contradiction with the existence of a minimal counterexample.

\subsection{Structure of a minimal counterexample}

In this section, we show a series of lemmas leading to the sought forbidden structures from Lemmas~\ref{lem:config1} and~\ref{lem:config2} (see the end of this section).  

\begin{lemma}\label{lem:deg2}
Let $(G, \alpha, \beta)$ be a minimal counterexample. Then $G$ is connected and has minimum degree at least $3$.
\end{lemma}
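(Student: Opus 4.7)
The plan is to use minimality of $(G,\alpha,\beta)$ in two standard ways: first to exclude multiple components, and then to exclude low-degree vertices by deleting them and lifting the recolouring sequence via Lemma~\ref{lem:bestchoice}.

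For connectedness, I would suppose for contradiction that $G$ has at least two connected components $G_1$ and $G_2$. Each $G_i$ is planar of girth at least $5$ and has strictly fewer vertices than $G$. The key sub-claim is that for each $i$ there is a recolouring sequence from $\alpha_{\restriction G_i}$ to $\beta_{\restriction G_i}$ inside $G_i$: if $G_i$ still has girth exactly $5$, this is forced by the minimality of $G$ (otherwise $(G_i,\alpha_{\restriction G_i},\beta_{\restriction G_i})$ would be a strictly smaller counterexample); and if $G_i$ is acyclic or has girth at least $6$, then $G_i$ is $2$-degenerate and Theorem~\ref{thm:deg}, applied with $k=4$, guarantees that $\cG(G_i,4)$ is connected. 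Since the two components share no vertices or edges, performing these two sequences one after the other yields a recolouring sequence from $\alpha$ to $\beta$ in all of $G$, contradicting the counterexample hypothesis.

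For the minimum degree claim, suppose there is a vertex $v$ with $d(v)\le 2$. Then $G-v$ is planar of girth at least $5$ and smaller than $G$, so the same case analysis (minimality of $G$ when the girth of $G-v$ is still $5$, and Theorem~\ref{thm:deg} when the girth increases or $G-v$ becomes acyclic) furnishes a recolouring sequence from $\alpha_{\restriction(G-v)}$ to $\beta_{\restriction(G-v)}$ in $G-v$. Since $k=4\ge d(v)+2$, the hypothesis of Lemma~\ref{lem:bestchoice} is satisfied, so the lemma lifts this sequence to a recolouring sequence from $\alpha$ to $\beta$ in $G$, again contradicting the counterexample hypothesis.

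The only mildly subtle point in this argument is that deleting a vertex (or passing to a component) may strictly increase the girth, so a naive appeal to minimality does not immediately apply to the smaller instance; this is handled cleanly by falling back on the degeneracy-based Theorem~\ref{thm:deg} in that regime. Beyond that, everything is routine once Lemma~\ref{lem:bestchoice} is in hand, so I do not anticipate any real obstacle.
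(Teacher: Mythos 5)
Your argument is correct and follows essentially the same route as the paper: delete a low-degree vertex, obtain a recolouring sequence on $G-v$ by minimality, and lift it back to $G$ (the paper does this lift by hand rather than by citing Lemma~\ref{lem:bestchoice}, and simply asserts connectedness without elaboration). The girth worry you raise is not actually a gap in the paper's version: in the definition of a special counterexample, ``girth five'' is ``girth at least five'', so $G-v$ and each component of a disconnected $G$ remain eligible for the minimality argument directly, and the fallback to Theorem~\ref{thm:deg} is unnecessary, though harmless.
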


\begin{proof}
Clearly, $G$ is connected. Suppose that $G$ has a vertex $v$ of degree at most $2$. By minimality of $(G, \alpha, \beta)$, there is a recolouring sequence from $\alpha_{ \restriction (G - v)}$ to $\beta_{\restriction (G - v)}$. This sequence can be lifted to a sequence in $G$ by first recolouring $v$ whenever a neighbour of $v$ is recoloured to the colour of $v$ (this is possible since the number $4$ of colours implies there is always a colour not appearing on $v$ or any of its neighbours). At the end of the sequence, we recolour $v$ to $\beta(v)$, which is a contradiction. 
\end{proof}

We say that a vertex $v$ of $G$ is \emph{$\alpha$-frozen} if $\alpha(v) = \gamma(v)$ for \emph{every} colouring $\gamma$ obtainable from $\alpha$ by a sequence of recolourings. We say that $v$ is \emph{$\alpha$-locked} if all colours $\{1, 2, 3, 4\}$ appear in the closed neighbourhood of $v$. Notice that if a vertex is frozen, then it is also locked, but the converse is not necessarily true. A vertex is said to be \emph{$\alpha$-loose} if it is not $\alpha$-frozen and \emph{$\alpha$-free} if it is not $\alpha$-locked.  A locked vertex is said to be \emph{unlocked} if it becomes free after recolouring at least one of its neighbours. 

We have the following simple observations about frozen vertices. The first lemma is obvious. 

\begin{lemma}
\label{rmk:unfreeze-propagate}
Let $G$ be a graph, and let $\varphi$ be a $4$-colouring of $G$. If $v$ is a vertex of $G$ of degree $3$ that is $\varphi$-locked, then recolouring one of the neighbours of $v$ unlocks $v$. 
\end{lemma}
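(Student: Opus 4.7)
The plan is a one-step pigeonhole argument. Since $v$ has degree exactly $3$, its closed neighbourhood $\{v\} \cup N(v)$ consists of exactly four vertices; the hypothesis that $v$ is $\varphi$-locked forces all four colours of $\{1,2,3,4\}$ to appear on them. Hence $\varphi$ restricted to $\{v\} \cup N(v)$ is injective, and each of the four colours appears exactly once in the closed neighbourhood of $v$.

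Now consider any proper recolouring step that changes the colour of one neighbour $u \in N(v)$ from $c = \varphi(u)$ to some $c' \neq c$. By the previous paragraph, $c$ appeared in the closed neighbourhood of $v$ only at the vertex $u$, so after the step the colour $c$ is no longer present on $\{v\} \cup N(v)$. Consequently the new colouring uses at most three colours on the closed neighbourhood of $v$, which by definition means that $v$ has become free; that is, $v$ is unlocked.

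I do not anticipate any real obstacle: the whole argument reduces to a single counting observation together with the fact that, by definition of a recolouring step, the new colour must \emph{differ} from the old one, which is exactly what removes the previous colour from the locked palette. The only subtle point is that the statement is a pure ``effect'' lemma---it says that whenever a neighbour of $v$ is recoloured the vertex $v$ ends up free---and does not assert the existence of a valid colour to recolour $u$ to; that existence question is separate and will presumably be handled where the lemma is applied.
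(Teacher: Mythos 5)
Your proof is correct and gives precisely the argument the paper elides by calling the lemma ``obvious'': since $v$ has degree $3$ and is $\varphi$-locked, the four vertices of $N[v]$ carry the four colours bijectively, so recolouring any neighbour removes its old colour from $N[v]$, leaving at most three colours and hence freeing $v$. Your closing remark that the lemma is a pure ``effect'' statement (it does not assert that a neighbour admits a valid recolouring) is also an accurate reading of how the lemma is used downstream.
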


The following simple consequence of the lemma will be used repeatedly. 

\begin{lemma}\label{lem:extra}
Let $G$ be a graph, and let $\varphi$ be a $4$-colouring of $G$. Let $P = v_1v_2\dots v_k$ be a path in $G$ such that $v_1$ is $\varphi$-free and, for $i \in \{2, \dots, \textcolor{violet}{k}\}$, $v_i$ is $\varphi$-locked and has degree $3$. Then, for $j \in \{2, \dots, \textcolor{violet}{k}\}$ there is a sequence of recolourings from $\varphi$ to some colouring of $G$ that unlocks $v_j$ that only recolour vertices of $P$. 
\end{lemma}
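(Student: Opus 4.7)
The plan is to prove this lemma by induction on $j$, using Lemma \ref{rmk:unfreeze-propagate} as the propagation engine. The intuition is to recolour the vertices $v_1, v_2, \ldots, v_{j-1}$ in order along the path; each such recolouring of $v_l$ will be possible because $v_l$ is free at that point, and in turn it will unlock $v_{l+1}$.

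For the base case $j=2$, I would use the hypothesis that $v_1$ is $\varphi$-free to recolour $v_1$ to some colour absent from its closed neighbourhood. Then, since $v_2$ is originally $\varphi$-locked, has degree $3$, and has $v_1$ as a neighbour, Lemma \ref{rmk:unfreeze-propagate} immediately gives that $v_2$ is unlocked in the resulting colouring.

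For the inductive step $j \geq 3$, I would apply the induction hypothesis to obtain a sequence that unlocks $v_{j-1}$ using only vertices of $P$, yielding a colouring $\varphi'$. If $v_j$ happens to already be free in $\varphi'$ (for instance, because some neighbour of $v_j$ lies among $v_1,\dots,v_{j-2}$ and was recoloured en route), then since $v_j$ itself has not been recoloured yet was initially locked, $v_j$ matches the definition of \emph{unlocked} and we are done. Otherwise $v_j$ remains locked in $\varphi'$; then, using the fact that $v_{j-1}$ is now free, I would recolour $v_{j-1}$ to a colour missing from its closed neighbourhood, and a final application of Lemma \ref{rmk:unfreeze-propagate} to the degree-$3$ locked vertex $v_j$ with its just-recoloured neighbour $v_{j-1}$ yields the desired unlocking.

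I anticipate no substantial obstacle: the argument is essentially a single straightforward induction driven by Lemma \ref{rmk:unfreeze-propagate}. The only points to keep an eye on are (i) checking at each step that the vertex we wish to recolour really is free at that moment, so that a suitable new colour exists and the resulting colouring remains proper (which is exactly what the induction maintains), and (ii) observing that none of the $v_l$ for $l \in \{2,\dots,k\}$ themselves are recoloured in the sequence reaching their unlocking, so any change in their locked status must originate from a recoloured neighbour, exactly matching the definition of \emph{unlocked}.
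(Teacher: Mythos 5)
Your proof is correct and uses the same idea as the paper's: recolour $v_1,\dots,v_{j-1}$ in order along the path, relying on Lemma~\ref{rmk:unfreeze-propagate} at each step. The paper states this in one line; your write-up merely makes the underlying induction explicit, including the harmless case where $v_j$ may already have been freed by an earlier recolouring.
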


\begin{proof}
We recolour $v_i$, $1 \leq i \leq j - 1$ in order, i.e. starting with $v_1$ and moving towards $v_{j - 1}$. This is possible by Lemma \ref{rmk:unfreeze-propagate}. 
\end{proof}

\begin{lemma}
\label{lem:frozen3}
Let $G$ be a graph, $\varphi$ be a $4$-colouring of $G$ and $v$ be a vertex of $G$ with degree $3$ that is $\varphi$-frozen. Then every neighbour of $v$ is $\varphi$-frozen. 
\end{lemma}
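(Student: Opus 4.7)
The plan is to argue by contradiction: suppose some neighbour $u$ of $v$ is not $\varphi$-frozen. Then there is a sequence of recolourings $\varphi = \varphi_{0} \sim \varphi_{1} \sim \cdots \sim \varphi_{m}$ with $\varphi_{m}(u) \neq \varphi(u)$. I would focus on the first step $\varphi_{i} \sim \varphi_{i+1}$ at which some neighbour $w$ of $v$ is recoloured. Before that step, no neighbour of $v$ has changed colour, and $v$ still carries $\varphi(v)$ because $v$ is $\varphi$-frozen; hence the closed neighbourhood $N[v]$ has exactly the same colouring under $\varphi_{i}$ as under $\varphi$.

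The crux is a preliminary observation: a $\varphi$-frozen vertex of degree $3$ must itself be $\varphi$-locked. Indeed, if some colour $c$ were missing from the four values on $N[v]$, a single recolouring $v \mapsto c$ would produce a valid $4$-colouring reachable from $\varphi$ in which $v$ has a different colour, contradicting that $v$ is $\varphi$-frozen. Consequently, $v$ is $\varphi_{i}$-locked, so Lemma~\ref{rmk:unfreeze-propagate} applies and tells us that the recolouring of $w$ in the step $\varphi_{i} \sim \varphi_{i+1}$ unlocks $v$.

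Since $v$ is $\varphi_{i+1}$-free and has degree $3$, there is a colour $c^{*}$ missing from $\varphi_{i+1}(N[v])$; in particular $c^{*} \neq \varphi_{i+1}(v) = \varphi(v)$ and $c^{*}$ does not appear on any neighbour of $v$. Thus I may append the single recolouring $v \mapsto c^{*}$ to the sequence, producing a colouring reachable from $\varphi$ in which $v$ has colour $c^{*} \neq \varphi(v)$, contradicting the hypothesis that $v$ is $\varphi$-frozen. Hence no neighbour of $v$ can ever change colour along a sequence starting at $\varphi$, i.e.\ every neighbour is $\varphi$-frozen. There is no real obstacle here: the entire argument rests on the already-established Lemma~\ref{rmk:unfreeze-propagate} together with the structural remark that \emph{frozen implies locked} for degree-$3$ vertices.
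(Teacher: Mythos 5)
Your proof is correct and follows essentially the same approach as the paper: argue by contradiction, pass to the first moment a neighbour of $v$ changes colour, and use Lemma~\ref{rmk:unfreeze-propagate} to conclude that $v$ becomes free, contradicting frozenness. The only small difference is that you re-derive the fact that frozen implies locked (and that $N[v]$ is unchanged before that step), both of which the paper treats as immediate; your extra care is harmless and if anything makes the appeal to Lemma~\ref{rmk:unfreeze-propagate} at the intermediate colouring $\varphi_i$ more transparent.
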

\begin{proof}
Suppose by contradiction that there is a neighbour $w$ of $v$ that is $\varphi$-loose. By definition of $w$, there is a recolouring sequence starting from $\varphi$ which recolours $w$. Up to truncating the recolouring sequence and renaming $w$, one can assume that $w$ is the only recoloured neighbour of $v$.  Since $v$ is $\varphi$-frozen and has degree precisely $3$, recolouring $w$ in turn unlocks $v$ by Lemma \ref{rmk:unfreeze-propagate}, which is a contradiction.  
\end{proof}

\begin{lemma}
\label{lem:frozen4}
Let $G$ be a graph, $\varphi$ be a $4$-colouring of $G$ and $v$ be a vertex of $G$ with degree $4$ that is $\varphi$-frozen. If $v$ has two frozen neighbours with the same colour in $\varphi$, then all neighbours of $v$ are $\varphi$-frozen.
\end{lemma}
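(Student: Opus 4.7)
The plan is a short proof by contradiction. Write $\varphi(v)=a$, let $u_1,u_2$ be the two frozen neighbours with $\varphi(u_1)=\varphi(u_2)=b$ guaranteed by hypothesis, and let $u_3,u_4$ be the remaining two neighbours of $v$; set $\{c,d\}=\{1,2,3,4\}\setminus\{a,b\}$.

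The key preliminary observation is that $v$ being $\varphi$-frozen forces $v$ to be locked in \emph{every} colouring $\gamma$ reachable from $\varphi$: otherwise, from such a $\gamma$ we could directly recolour $v$ to a colour absent from its closed neighbourhood, contradicting frozenness. Since the frozen vertices $u_1,u_2$ always carry colour $b$, this locking condition forces $\{\gamma(u_3),\gamma(u_4)\}=\{c,d\}$ throughout any recolouring sequence starting from $\varphi$; in particular, $u_3$ and $u_4$ always carry the two distinct colours $c$ and $d$ in some order.

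Now suppose for contradiction that one of $u_3,u_4$ is $\varphi$-loose. Then there is a recolouring sequence from $\varphi$ that recolours some neighbour of $v$; truncate it at the first such step. The recoloured vertex cannot be $u_1$ or $u_2$, so it is $u_3$ or $u_4$, and by symmetry we may assume it is $u_3$. Immediately before this step every neighbour of $v$ still has its original colour, so $u_3$ is being recoloured from $c$ to some new colour $c'$ with $c'\neq c$ and $c'\neq a$ (because $v\in N(u_3)$ has colour $a$). Hence $c'\in\{b,d\}$, and in either case the pair $(u_3,u_4)$ after the step no longer covers $\{c,d\}$: colour $c$ disappears from $N(v)$, so $v$ becomes free and could be recoloured to $c$, contradicting $v$ being $\varphi$-frozen. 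By symmetry both $u_3$ and $u_4$ are $\varphi$-frozen.

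The only genuinely delicate point is isolating the preliminary equivalence between $\varphi$-frozenness of $v$ and $v$ being locked in every reachable colouring; once that is in hand, the rest is a direct two-case analysis on the forbidden new colour of $u_3$.
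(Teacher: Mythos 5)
Your proof is correct and follows essentially the same approach as the paper: assume by contradiction that one of the non-frozen-pair neighbours is loose, look at the first recolouring step that touches a neighbour of $v$, and show that this step would unlock $v$, contradicting frozenness. Your explicit preliminary observation (frozen implies locked in every reachable colouring) makes the case analysis slightly cleaner than the paper's, but it is the same underlying argument.
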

\begin{proof}
Let $u_1, u_2$ be the two $\varphi$-frozen neighbours of $v$ such that $\varphi(u_1) = \varphi(u_2)$, and let $w_1$ and $w_2$ be the other two neighbours of $v$. Suppose for a contradiction that one of $w_1, w_2$, say $w_1$, is $\varphi$-loose. So we can let $\sigma$ be the first colouring obtainable from $\varphi$ via a sequence of recolourings such that $\sigma(w_1) \not= \varphi(w_1)$. 

Note that $v$  being $\varphi$-frozen implies $\varphi(v), \varphi(u_1), \varphi(w_1), \varphi(w_2)$ are pairwise distinct. Thus, $\sigma(w_1) \in \{ \varphi(u_1), \varphi(w_1), \varphi(w_2)\}$ and since $u_1$ and $u_2$ are frozen, $\sigma(u_i) = \varphi(u_i)$ for $i=1, 2$. Hence, $v$ has only two colours appearing in its neighbourhood, and hence is $\sigma$-loose, which is a contradiction. 
\end{proof}


Using these properties, we can prove the following result. Let $G$ be a plane graph with girth $5$, and let $f$ be a face of $G$. The set of neighbours adjacent to some vertex in $V(f)$ (but not on $f$) is denoted by $N(f)$.  A $5$-face $f$ in $G$ with $V(f) = \{v_1, v_2, v_3,v_4, v_5\}$ is \emph{bad} if $d(v_1) \leq 4$ and $d(v_i) = 3$ for $i \in \{2, \dots, 5\}$; it is \emph{very bad} if moreover $d(v_1) = 4$. 

\begin{lemma}
\label{lem:5-face}
Let $(G, \alpha, \beta)$ be a minimal counterexample. Then $G$ does not contain a bad $5$-face with at least one vertex that is $\alpha$-loose. 
\end{lemma}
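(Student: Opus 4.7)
The plan is to argue by contradiction: suppose $G$ contains a bad $5$-face $f$ with cyclic vertices $v_1, v_2, v_3, v_4, v_5$, where $d(v_1)\le 4$, $d(v_i) = 3$ for $i \in \{2,3,4,5\}$, and some vertex $v_j$ on $f$ is $\alpha$-loose. The first step is to reduce to the case where the recolouring affects $v_1$. Using that four of the five face vertices have degree $3$, I would apply Lemma~\ref{lem:extra} along the chain of degree-$3$ vertices on the $5$-cycle to propagate the looseness from $v_j$ around to $v_1$: any sequence witnessing the looseness of $v_j$ must, at some step, free a face-neighbour of $v_j$, and this freeness can be successively pushed along $f$ using Lemma~\ref{rmk:unfreeze-propagate}. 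This yields a colouring $\alpha'$ of $G$ in which $v_1$ is free, after which I recolour $v_1$ to obtain a colouring $\alpha''$.

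Next, I would invoke the minimality of $(G, \alpha, \beta)$ on the graph $G' = G - \{v_2, v_3, v_4, v_5\}$. This graph is planar of girth at least $5$, has strictly fewer vertices than $G$, and inherits a $4$-colouring $\alpha''|_{G'}$ and a $3$-colouring $\beta|_{G'}$; hence $(G', \alpha''|_{G'}, \beta|_{G'})$ cannot be a counterexample, so there exists a recolouring sequence from $\alpha''|_{G'}$ to $\beta|_{G'}$ in $G'$.

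The main step, and the hardest, is lifting this sequence back to a sequence in $G$. Here Lemma~\ref{lem:bestchoice} fails directly because $k - d(v_i) - 1 = 4 - 3 - 1 = 0$ for $i \in \{2,3,4,5\}$, leaving no buffer. To overcome this, I would exploit two structural features: $\beta$ is a $3$-colouring, so one colour is globally unused in the target and provides slack near the end of the process, and the $5$-cycle $f$ admits several $4$-colour extensions of any colouring prescribed on a subset of its vertices. By carefully interleaving the recolourings of $v_2, v_3, v_4, v_5$ with those of their external neighbours, in an order dictated by a traversal of the $5$-cycle and using Lemma~\ref{lem:extra} repeatedly to unlock face vertices on demand, I expect to maintain a proper colouring throughout and ultimately reach $\beta$.

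The main obstacle I anticipate is the subcase where $d(v_1) = 3$ (so $f$ is bad but not very bad), since then all five face vertices have degree $3$ and the only flexibility comes from the single spare colour. Resolving this case should require finer information about frozen neighbourhoods, using Lemmas~\ref{lem:frozen3} and~\ref{lem:frozen4} to constrain the colour pattern around $f$: either the looseness of $v_j$ forces a cascading unfreezing of every external neighbour $w_i$, contradicting the assumption that the only loose face vertex is $v_j$, or the rigid colour pattern on $f$ yields a bad face with all frozen vertices, enabling the lift without any slack. Either conclusion closes the contradiction.
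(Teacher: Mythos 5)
Your high-level strategy matches the paper's — derive a colouring where the face has a free vertex, invoke minimality on a proper subgraph, lift the recolouring sequence back to $G$ — but the crucial technical content is missing, and there are two concrete problems.

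First, the decomposition is different in a way that hurts you. You remove only $\{v_2,\dots,v_5\}$, keeping $v_1$ in $G'$. This means the given sequence in $G'$ freely recolours $v_1$, which is adjacent to the two degree-$3$ vertices $v_2,v_5$ of $f$. Each time $v_1$ is recoloured to the current colour of $v_2$ or $v_5$, you must first move that vertex out of the way, but $v_2,v_5$ have only degree $3$, so there is no buffer colour, and you must argue they can \emph{always} be freed at that moment. The paper avoids this by removing all of $V(f)$ and then treating all five face vertices on the same footing.

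Second, and more seriously, the lifting step is unsupported. You write that by ``carefully interleaving'' and ``using Lemma~\ref{lem:extra} repeatedly'' you ``expect to maintain a proper colouring'' — but the entire difficulty lies in proving an invariant that makes this possible. The paper's proof rests on Claims~\ref{claim:unfreeze} and~\ref{claim:extend}: that whenever some vertex of $f$ is free, one can recolour \emph{only inside} $V(f)$ so as to make any prescribed vertex $z\in V(f)$ avoid a prescribed colour $a$ \emph{while leaving one of the degree-3 vertices $u_2,\dots,u_5$ free again}. This preserved invariant is what allows each external step of $s'$ to be absorbed. Without proving something of this kind, nothing guarantees that the face does not become fully locked midway through the lift (and Lemma~\ref{rmk:frozen-5face} shows a fully-locked configuration does exist when $d(v_1)=4$). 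The proof of Claim~\ref{claim:unfreeze} for the degree-$4$ vertex is itself a nontrivial case analysis (Figure~\ref{fig:recol-vertex-v1}); your phrase ``propagate the looseness from $v_j$ around to $v_1$'' glosses over exactly that step, and Lemma~\ref{lem:extra} does not apply to $v_1$ since $v_1$ may have degree $4$.

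A smaller remark: you treat $d(v_1)=3$ as the harder subcase, but it is in fact the easier one — by Lemma~\ref{rmk:frozen-5face} there is \emph{no} locked colouring of $f$ when all five vertices have degree $3$, so the invariant is trivial to maintain there. The genuinely hard case is $d(v_1)=4$, where a unique locked colouring exists, and that is where Claim~\ref{claim:unfreeze}'s case analysis is needed. Finally, you would still need the end-game argument converting the intermediate colouring $\beta'$ into $\beta$ by recolouring only inside $V(f)$, which the paper achieves by pushing colour $4$ onto two non-adjacent face vertices; this is absent from your sketch even though you note colour $4$ is unused by $\beta$.
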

\begin{proof}
Suppose by contradiction that $G$ contains a bad $5$-face $f$ with an $\alpha$-loose vertex. By definition, $\alpha$ can be transformed to some $4$-colouring $\varphi$ such that $f$ has a vertex that is $\varphi$-free. 

For a colouring $\sigma$ of $G$, we shall slightly abuse notation by saying the face $f$ is $\sigma$-free if at least one vertex incident to $f$ is $\sigma$-free. Our aim is to show that any recolouring sequence of $G - V(f)$ can be lifted into a sequence in $G$, which is a a contradiction to the minimality of $(G, \alpha, \beta)$. 

 Let $u_1$ denote the vertex of degree at most $4$ in $f$, and let $u_2, \ldots, u_5$ denote the vertices of degree precisely $3$ on $f$ in a clockwise ordering starting at $u_1$ around $f$. By Lemma \ref{lem:deg2}, $d(u_1) \geq 3$. We require the following two claims.

\begin{claim}
\label{claim:unfreeze}
If $f$ is $\sigma$-free and $w \in \{u_1, \dots, u_5\}$, then there is a recolouring sequence starting from $\sigma$ that recolours only vertices in $V(f)$ and unlocks $w$.  
\end{claim}

\begin{proof}[Proof of Claim \ref{claim:unfreeze}.]
 If $w$ is $\sigma$-free, there is nothing to prove. 
 
 So we can assume that $w$ is $\sigma$-locked. If $w\neq u_1$, then one of the two facial paths from $w$ to $u_1$ goes through a free vertex (otherwise all vertices of $f$ are locked). Let $u$ be the first such vertex. 
 We can apply Lemma~\ref{lem:extra} with the path from $w$ to $u$, and the claim follows.
 
    
Therefore we can assume that $w = u_1$. Since $f$ is $\sigma$-free and each $u_i \not= w$ has degree precisely $3$, the same argument can be applied to show that for $i \in \{2, \dots, 5\}$ each $u_i$ can be unlocked by recolouring vertices in $V(f) - w$ and if $d(u_1) = 3$, then $w = u_1$ may also be unlocked. To complete the proof, it remains to show that $w$ can also be unlocked if $d(w) = 4$ by recolouring vertices in $V(f)$. 

Let $w_1$ and $w_2$ denote the two neighbours of $w$ outside $f$, and suppose $\sigma(w) = 1$, $\sigma(w_1) = 3$ and $\sigma(w_2) = 4$. Our aim is to try to recolour $w$ to colour $2$ (by possibly first recolouring only vertices in $V(f) - w$).  If $\sigma(u_2), \sigma(u_5) \neq 2$, then $w$ is $\sigma$-free,  a contradiction. Therefore, either $\sigma(u_2) = 2 \not= \sigma(u_5)$ or $\sigma(u_2) = \sigma(u_5) = 2$. We address the two cases separately.   %

\medskip

\noindent
\textbf{Case 1:} $\sigma(u_2) = 2 \not= \sigma(u_5)$. \\ Since $f$ is $\sigma$-free,  we can let $i \in \{2, \dots, 5\}$ be the smallest index such that $u_i$ is $\sigma$-free. If $i = 2$, then we recolour $u_2$ which, in turn, unlocks $v$. And if $i \neq 4$, then recolouring $u_j$ unlocks $u_{j-1}$ for $i \geq j \geq 1$ in order as needed. 

It only remains to address the case $i = 5$. We first try to recolour $u_5$ with a colour distinct from $2$; this in turn unlocks $u_4$ and by recolouring $u_4$ vertex $u_3$ is unlocked etc. until $u_1$ is unlocked. So we can assume that we can only recolour $u_5$ with colour $2$.  Without loss of generality, set $\sigma(u_5) = 3$. Since $u_4$ is locked, $\sigma(u_3) \neq 3 = \sigma(u_5)$ and since $u_2$ is locked, $\sigma(u_3) \not= 1 = \sigma(u_1)$. Therefore, $\sigma(u_3) = 4 \not= \sigma(u_2) = 2$. Since $u_3$ is locked, $\sigma(u_4) \not=  2 = \sigma(u_2)$ and thus $\sigma(u_4) = 1$. By applying the recolouring sequence shown in Figure~\ref{fig:recol-vertex-v1}, $w$ may be recoloured to $2$. This completes Case~1. 
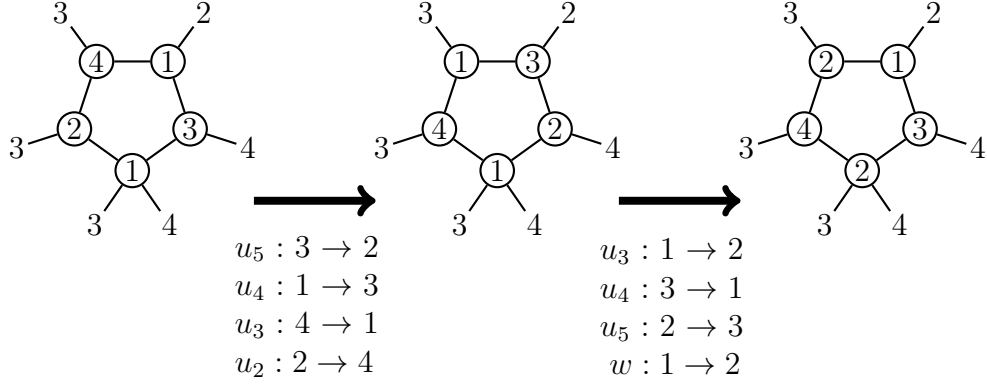
\begin{figure}
    \centering
    \begin{tikzpicture}[scale=.8,thick,
    node/.style = {inner sep = 1pt},
    cnode/.style = {node, draw, circle}
    ]
        \node[cnode] (v) at (270:1) {1} ; 
        \node[cnode] (u1) at (198:1) {2} ;
        \node[cnode] (u2) at (126:1) {4} ;
        \node[cnode] (u3) at (54:1) {1} ;
        \node[cnode] (u4) at (342:1) {3} ;
        \draw (v) -- (u1) -- (u2) -- (u3) -- (u4) -- (v) ;
        \node[node] (vb1) at (252:2) {3} ; 
        \node[node] (vb2) at (288:2) {4} ; 
        \node[node] (ub1) at (198:2) {3} ; 
        \node[node] (ub2) at (126:2) {3} ; 
        \node[node] (ub3) at (54:2) {2} ; 
        \node[node] (ub4) at (342:2) {4} ; 
        
        \draw (vb1) -- (v) -- (vb2) ;
        \foreach \i in {1,2,3,4}{
            \draw (u\i) -- (ub\i) ;
        }

        \node[text width = 2.5cm] at (3.25, -3.25) {
            $u_5 : 3 \rightarrow 2$ 
            $u_4 : 1 \rightarrow 3$
            $u_3 : 4 \rightarrow 1$
            $u_2 : 2 \rightarrow 4$
        } ; 
        
        \draw[->, line width = 3pt] (2, -1.5) -- (4, -1.5) ;

        \begin{scope}[shift={(6, 0)}]
        \node[cnode] (v) at (270:1) {1} ; 
        \node[cnode] (u1) at (198:1) {4} ;
        \node[cnode] (u2) at (126:1) {1} ;
        \node[cnode] (u3) at (54:1) {3} ;
        \node[cnode] (u4) at (342:1) {2} ;
        \draw (v) -- (u1) -- (u2) -- (u3) -- (u4) -- (v) ;
        \node[node] (vb1) at (252:2) {3} ; 
        \node[node] (vb2) at (288:2) {4} ; 
        \node[node] (ub1) at (198:2) {3} ; 
        \node[node] (ub2) at (126:2) {3} ; 
        \node[node] (ub3) at (54:2) {2} ; 
        \node[node] (ub4) at (342:2) {4} ; 
        
        \draw (vb1) -- (v) -- (vb2) ;
        \foreach \i in {1,2,3,4}{
            \draw (u\i) -- (ub\i) ;
        }
        \end{scope}
        \node[text width = 2.5cm] at (9.25, -3.25) {
            $u_3 : 1 \rightarrow 2$ 
            $u_4 : 3 \rightarrow 1$
            $u_5 : 2 \rightarrow 3$
            \hspace*{0.3em}$w : 1 \rightarrow 2$
        } ; 
        
        \draw[->, line width = 3pt] (8, -1.5) -- (10, -1.5) ;

        \begin{scope}[shift={(12, 0)}]
        \node[cnode] (v) at (270:1) {2} ; 
        \node[cnode] (u1) at (198:1) {4} ;
        \node[cnode] (u2) at (126:1) {2} ;
        \node[cnode] (u3) at (54:1) {1} ;
        \node[cnode] (u4) at (342:1) {3} ;
        \draw (v) -- (u1) -- (u2) -- (u3) -- (u4) -- (v) ;
        \node[node] (vb1) at (252:2) {3} ; 
        \node[node] (vb2) at (288:2) {4} ; 
        \node[node] (ub1) at (198:2) {3} ; 
        \node[node] (ub2) at (126:2) {3} ; 
        \node[node] (ub3) at (54:2) {2} ; 
        \node[node] (ub4) at (342:2) {4} ;

        \draw (vb1) -- (v) -- (vb2) ;
        \foreach \i in {1,2,3,4}{
            \draw (u\i) -- (ub\i) ;
        }
        \end{scope}

    \end{tikzpicture}
    \caption{\label{fig:recol-vertex-v1} Recolouring sequence from Case 1 of Claim~\ref{claim:unfreeze}.
    }
    
\end{figure}

\medskip
    
\noindent
\textbf{Case 2:} $\sigma(u_2) = \sigma(u_5) = 2$, \\ 
If either $u_2$ or $u_5$, say $u_2$, is $\sigma$-free, then we recolour $u_2$ with a colour distinct from $2$ and apply Case 1.  

Otherwise, since $f$ is $\sigma$-free, either  $u_3$ or $u_4$ is $\sigma$-free. Assume without loss of generality that $u_3$ is $\sigma$-free; then recolouring $u_3$ unlocks $u_2$, in which case the argument from the preceding paragraph can be applied. This completes Case 2. 
\end{proof}

Claim~\ref{claim:unfreeze} will allow us to prove the following claim, from which we will derive Lemma~\ref{lem:5-face}. 

\begin{claim}
\label{claim:extend}
If  $f$ is $\sigma$-free, $z \in V(f)$ and $a \in \{1, 2, 3, 4\}$, then there is a recolouring sequence from $\sigma$ to a colouring $\sigma'$ that recolours only vertices in $V(f)$ such that $\sigma'(z) \not= a$ and some vertex in $\{u_2, \dots, u_5\}$ is $\sigma'$-free.
\end{claim}
\begin{proof}[Proof of Claim \ref{claim:extend}.]
We distinguish three cases (the cases $z=u_4$ and $z=u_5$ are symmetric to respectively $z=u_3$ and $z=u_2$).

\medskip

\noindent
\textbf{Case 1:} $z = u_1$. \\ 
By Claim~\ref{claim:unfreeze}, we can transform $\sigma$ to some colouring $\sigma'$ by only recolouring vertices in $V(f)$ so that $z$ is $\sigma$-free and so by recolouring $z$ if necessary we can further assume $\sigma'(z) \not=a$. If some vertex in $ \{u_1, \dots, u_5\} \setminus \{z\}$ is $\sigma'$-free, then the claim follows. So we can assume that each $u_i$ is $\sigma'$-locked. By symmetry, we assume that the neighbours of $z$ outside $f$ are coloured with 2 and 3. Thus, $\sigma'(u_2) = \sigma'(u_5) = 2$ and so $\sigma'(u_3) = 3, \sigma'(u_4) = 4 \not= \sigma'(z) = 1$. 
  
 
 Now we proceed with the recolouring sequence shown in Figure~\ref{fig:recol-vertex-v} (note that, at the end of the sequence, the colour of $z$ is $1$ while $u_2$ is free, as needed). 
    
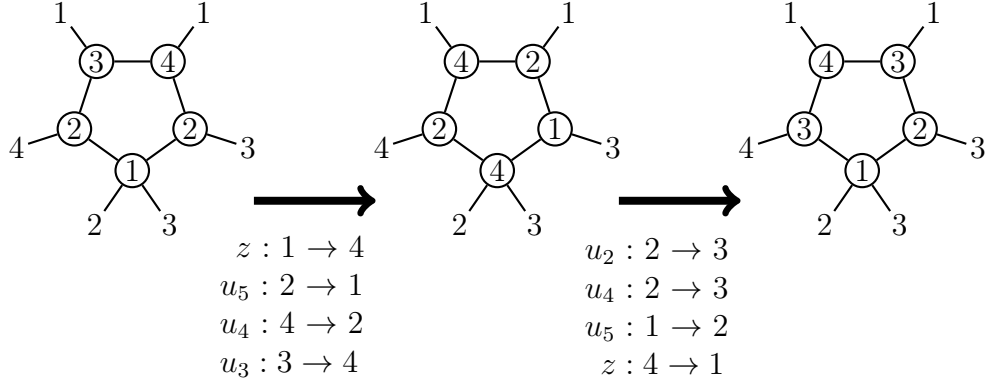
\begin{figure}
    \centering
    \begin{tikzpicture}[thick, scale=0.8,
    node/.style = {inner sep = 1pt},
    cnode/.style = {node, draw, circle}
    ]
        \node[cnode] (v) at (270:1) {1} ; 
        \node[cnode] (u1) at (198:1) {2} ;
        \node[cnode] (u2) at (126:1) {3} ;
        \node[cnode] (u3) at (54:1) {4} ;
        \node[cnode] (u4) at (342:1) {2} ;
        \draw (v) -- (u1) -- (u2) -- (u3) -- (u4) -- (v) ;
        \node[node] (vb1) at (252:2) {2} ; 
        \node[node] (vb2) at (288:2) {3} ; 
        \node[node] (ub1) at (198:2) {4} ; 
        \node[node] (ub2) at (126:2) {1} ; 
        \node[node] (ub3) at (54:2) {1} ; 
        \node[node] (ub4) at (342:2) {3} ; 
        
        \draw (vb1) -- (v) -- (vb2) ;
        \foreach \i in {1,2,3,4}{
            \draw (u\i) -- (ub\i) ;
        }

        \node[text width = 2.5cm] at (3, -3.25) {
            \hspace*{0.5em}$z : 1 \rightarrow 4$ 
            $u_5 : 2 \rightarrow 1$
            $u_4 : 4 \rightarrow 2$
            $u_3 : 3 \rightarrow 4$
        } ; 
        
        \draw[->, line width = 3pt] (2, -1.5) -- (4, -1.5) ;

        \begin{scope}[shift={(6, 0)}]
        \node[cnode] (v) at (270:1) {4} ; 
        \node[cnode] (u1) at (198:1) {2} ;
        \node[cnode] (u2) at (126:1) {4} ;
        \node[cnode] (u3) at (54:1) {2} ;
        \node[cnode] (u4) at (342:1) {1} ;
        \draw (v) -- (u1) -- (u2) -- (u3) -- (u4) -- (v) ;
        \node[node] (vb1) at (252:2) {2} ; 
        \node[node] (vb2) at (288:2) {3} ; 
        \node[node] (ub1) at (198:2) {4} ; 
        \node[node] (ub2) at (126:2) {1} ; 
        \node[node] (ub3) at (54:2) {1} ; 
        \node[node] (ub4) at (342:2) {3} ; 
        
      \draw (vb1) -- (v) -- (vb2) ;
        \foreach \i in {1,2,3,4}{
            \draw (u\i) -- (ub\i) ;
        }
        \end{scope}
        \node[text width = 2.5cm] at (9, -3.25) {
            $u_2 : 2 \rightarrow 3$ 
            $u_4 : 2 \rightarrow 3$
            $u_5 : 1 \rightarrow 2$
            \hspace*{0.5em}$z : 4 \rightarrow 1$
        } ; 
        
        \draw[->, line width = 3pt] (8, -1.5) -- (10, -1.5) ;

        \begin{scope}[shift={(12, 0)}]
        \node[cnode] (v) at (270:1) {1} ; 
        \node[cnode] (u1) at (198:1) {3} ;
        \node[cnode] (u2) at (126:1) {4} ;
        \node[cnode] (u3) at (54:1) {3} ;
        \node[cnode] (u4) at (342:1) {2} ;
        \draw (v) -- (u1) -- (u2) -- (u3) -- (u4) -- (v) ;
        \node[node] (vb1) at (252:2) {2} ; 
        \node[node] (vb2) at (288:2) {3} ; 
        \node[node] (ub1) at (198:2) {4} ; 
        \node[node] (ub2) at (126:2) {1} ; 
        \node[node] (ub3) at (54:2) {1} ; 
        \node[node] (ub4) at (342:2) {3} ; 
        
       \draw (vb1) -- (v) -- (vb2) ;
        \foreach \i in {1,2,3,4}{
            \draw (u\i) -- (ub\i) ;
        }
        \end{scope}

    \end{tikzpicture}
    \caption{\label{fig:recol-vertex-v}Recolouring sequence that unlocks vertices without changing the colour of $v$.}
    
\end{figure}

\medskip    

\noindent
\textbf{Case 2:} $z = u_2$. \\
By Claim~\ref{claim:unfreeze}, we can transform $\sigma$ to a colouring $\sigma'$ by only recolouring vertices in $V(f)$ so that $u_1$ is $\sigma'$-free and by recolouring $u_1$ if necessary we can further assume that $z$ is $\sigma'$-free. If $\sigma'(z) \not=a$ then we are done. Suppose $\sigma'(z) = a$. If $u_3$ is $\sigma'$-locked, then recolouring $z = u_2$ in turn unlocks $u_3$ by Lemma~\ref{rmk:unfreeze-propagate}, and we are done. If, on the other hand, $u_3$ is $\sigma'$-free, then we recolour $u_3$ if necessary to unlock $u_4$. If, at this point, $u_2$ is locked, then we can recolour $u_1$ which is still unlocked, and then $u_2$ becomes free. We finish the sequence by recolouring $z = u_2$ (to a colour distinct from $a$) and as $u_4$ is still free the claim follows. This completes Case 2. 

\medskip
 
 \noindent   
 \textbf{Case 3:} $z = u_3$, \\
As before, by Claim~\ref{claim:unfreeze}, we can transform $\sigma$ to a colouring $\sigma'$ by only recolouring vertices in $V(f)$ so that $u_3$ is $\sigma'$-free and $\sigma'(u_3) \not=a$.  Assume that each vertex in $\{u_2, \dots, u_5\}$ is $\sigma'$-locked (else we are done). 

Now recolouring $u_3$ unlocks both $u_2$ and $u_4$, and in turn recolouring $u_4$ unlocks $u_5$.  If, at this point, $u_3$ is locked, we recolour $u_2$ to unlock $u_3$ and then recolour $u_3$ if necessary so that its colour is distinct from $a$. Since $u_5$ is free, Case 3 is complete. 
\end{proof}

We can now finish the proof of the Lemma~\ref{lem:5-face}. By the minimality of $(G, \alpha, \beta)$, there is a sequence $s'$ of recolourings from $\alpha_{\restriction (G - V(f))}$ to $\beta_{\restriction (G - V(f))}$.

We lift $s'$ to a sequence from $\alpha$ to $\beta$ in $G$ as follows. 
Each time a vertex $v \in N(f)$ is recoloured to the current colour $a$ of some vertex $z \in V(f)$, we precede the recolouring of $v$ by changing the colour of $z$ to a colour distinct from $a$ and such that another vertex of $\{u_2,\ldots,u_5\}$ is unlocked; this is always possible by Claim \ref{claim:extend} since (by hypothesis), $f$ contains a free vertex. Observe that this operation only changes the colours of vertices in $V(f)$ and leaves $f$ with a free vertex in $\{u_2,\ldots,u_5\}$. (Note that $v$ indeed has at most one neighbour in $f$ by girth assumption). This shows that $s'$ lifts to a sequence from $\alpha$ to some colouring $\beta'$ of $G$ such that $\beta'_{\restriction (G - V(f))} = \beta_{\restriction (G - V(f))}$ and some vertex in $\{u_2, \dots, u_5\}$ is $\beta'$-free.  To finish the proof, we describe a sequence of recolourings from $\beta'$ to $\beta$ that recolours only vertices in $V(f)$. 

From $\beta'$ we recolour as many vertices as possible in $V(f)$ to colour $4$ (recall that $\beta$ uses only colours 1, 2 and 3) and let $\beta''$ denote the resulting colouring; notice that $\beta''(u_j) = \beta''(u_{j + 2}) = 4$ for some $j \in [5]$ (here $u_6 = u_1$ and $u_7 = u_2$). From $\beta$, we recolour $u_j$ and $u_{j + 2}$ to colour $4$ and denote the resulting colouring $\beta^*$. Note that $u_{j + 1}$ is free in both $\beta''$ and $\beta^*$ unless possibly if $u_{j + 1} = u_1$, in which case $\beta''(u_1) = \beta^*(u_1)$. Therefore, by recolouring $u_{j + 1}$ if necessary from $\beta''$, we can assume $\beta''(u_{j + 1}) = \beta^*(u_{j + 1})$. The only cause of difficulty is when $\beta^*(u_{j + 3}) = \beta''(u_{j + 4})$, $\beta^*(u_{j + 4}) = \beta''(u_{j + 3})$ and $u_{j + 4}$ and $u_{j + 3}$ are locked in $\beta^*$ (all the other cases are easy and left to the reader). In this case, assuming without loss of generality that $u_j$ is the vertex of degree $3$, we recolour, from $\beta^*$, $u_j$ to colour $\beta(u_j)$, then recolour $u_{j-1} = u_{j + 4}$ to colour $4$, followed by recolouring $u_{j -2} = u_{j + 3}$ to its colour $\beta^*(u_{j+4})$ in $\beta'$ and finally $u_{j+4}$ to $\beta^*(u_{j+3})$. We finish the sequence by recolouring $u_j$ to colour $4$. The proof of the lemma is complete. 
\end{proof}



In order to prove the next lemma, we make use of the following observation (see also Figure \ref{fig:frozen-5-face}). 

\begin{lemma}
\label{rmk:frozen-5face}
Let $f$ be a $5$-face in a plane graph $G$ with girth $5$ and $v\in V(f)$. Assume that all vertices of $f$ but $v$ have degree 3. If $d(v)=3$, there is no  $4$-colouring $\varphi$ of $G[V(f) \cup N(f)-N(v)]$ such that every vertex in $V(f)$ is $\varphi$-locked. Otherwise $d(v)\geqslant 4$, and there is a unique such colouring (see Figure~\ref{fig:frozen-5-face}).
\end{lemma}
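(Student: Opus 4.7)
The plan is to label the vertices of $f$ in cyclic order as $v_1 = v, v_2, v_3, v_4, v_5$ and to write $w_i$ for the unique external neighbour of $v_i$ when $i \in \{2,3,4,5\}$. A short girth argument---any coincidence among the $w_i$'s, or $w_i \in N(v)$ for some $i \in \{2,3,4,5\}$, would create a cycle of length at most $4$---shows that these four vertices are pairwise distinct and all lie outside $N(v)$, so they belong to $N(f)-N(v)$ and are present in the subgraph $G[V(f) \cup N(f)-N(v)]$. Hence for $i \in \{2,3,4,5\}$ the full $G$-neighbourhood of $v_i$ sits inside that subgraph, namely $\{v_{i-1}, v_{i+1}, w_i\}$.

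I would then extract from the assumption that each of $v_2,v_3,v_4,v_5$ is $\varphi$-locked the four identities
\[
\{\varphi(v_{i-1}),\varphi(v_i),\varphi(v_{i+1}),\varphi(w_i)\} = \{1,2,3,4\}, \quad i \in \{2,3,4,5\},
\]
which in particular force $\varphi(v_{i-1}) \neq \varphi(v_{i+1})$ for each such $i$. Running through $i=2,3,4,5$ shows that $\varphi(v_1)$ differs from every one of $\varphi(v_2),\varphi(v_3),\varphi(v_4),\varphi(v_5)$, so the latter four take their values in the three colours distinct from $\varphi(v_1)$. Combining this with $\varphi(v_i)\neq\varphi(v_{i+1})$ (proper colouring) and $\varphi(v_2)\neq\varphi(v_4)$, $\varphi(v_3)\neq\varphi(v_5)$ then pins down $\varphi(v_2)=\varphi(v_5)$ and the pattern
\[
(\varphi(v_1),\varphi(v_2),\varphi(v_3),\varphi(v_4),\varphi(v_5)) = (a,b,c,d,b)
\]
for some permutation $\{a,b,c,d\}$ of $\{1,2,3,4\}$. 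Each $\varphi(w_i)$ is then determined as the unique colour missing from $\{\varphi(v_{i-1}),\varphi(v_i),\varphi(v_{i+1})\}$, which reproduces the colouring displayed in Figure~\ref{fig:frozen-5-face}.

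It remains to treat $v$ itself. Since $\varphi(v_2)=\varphi(v_5)=b$, only the two colours $a$ and $b$ occur on $v$ and its two neighbours on $f$, so for $v$ to be $\varphi$-locked the two remaining colours $c$ and $d$ must be realised by the external neighbours of $v$. If $d(v)=3$, $v$ has a single external neighbour, which can realise at most one colour, and locking is impossible; this gives the first half of the statement. If $d(v)\geq 4$, one can colour two external neighbours of $v$ with $c$ and $d$, which combined with the already-forced pattern on $V(f)\cup\{w_2,w_3,w_4,w_5\}$ yields the unique colouring (up to permutation of the four colours) claimed by the lemma.

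The only non-automatic step is the combinatorial deduction that the locking identities at $v_2,\ldots,v_5$ force the pattern $(a,b,c,d,b)$; once that is in hand, the split on $d(v)$ and the verification of the forced colours of the $w_i$ are immediate.
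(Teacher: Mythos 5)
Your proof is correct and supplies exactly the case analysis that the paper leaves to the reader (it states only that the lemma "follows by case analysis"). The girth argument that the external neighbours $w_2,\dots,w_5$ are pairwise distinct, lie outside $N(v)$, and differ from the vertices of $f$ is needed and you handle it; the deduction that the locking constraints at $v_2,\dots,v_5$ force $\varphi(v_1)$ to differ from all other face colours and then pin down the cyclic pattern $(a,b,c,d,b)$ is the heart of the matter and is carried out cleanly; and the final split on $d(v)$ — observing that $\varphi(v_2)=\varphi(v_5)$ leaves two colours to be realised on the external neighbours of $v$, which is impossible with a single external neighbour — is precisely why the lemma distinguishes $d(v)=3$ from $d(v)\geq 4$ and matches the colouring displayed in Figure~\ref{fig:frozen-5-face}. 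This is the intended argument.
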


The (easy) proof of Lemma \ref{rmk:frozen-5face} follows by case analysis and is left to the reader. We are now ready to prove Lemma \ref{lem:config2-deg3}. 

\begin{figure}
    \centering
    \begin{tikzpicture}[thick,
    node/.style = {inner sep = 1pt},
    cnode/.style = {node, draw, circle}
    ]
        \node[cnode] (v) at (270:1) {4} ; 
        \node[cnode] (u1) at (198:1) {1} ;
        \node[cnode] (u2) at (126:1) {2} ;
        \node[cnode] (u3) at (54:1) {3} ;
        \node[cnode] (u4) at (342:1) {1} ;
        \draw (v) -- (u1) -- (u2) -- (u3) -- (u4) -- (v) ;
        \draw (252:2) -- (v) -- (288:2) ;
        \draw[fill=white] (270:1.9) ellipse (.8cm and 0.4cm);
        \node[node] (vb1) at (270:1.9) {2,3} ; 
        \node[node] (ub1) at (198:2) {3} ; 
        \node[node] (ub2) at (126:2) {4} ; 
        \node[node] (ub3) at (54:2) {4} ; 
        \node[node] (ub4) at (342:2) {2} ; 
        
        \foreach \i in {1,2,3,4}{
            \draw (u\i) -- (ub\i) ;
        }

    \end{tikzpicture}
    \caption{\label{fig:frozen-5-face}The unique locked $4$-colouring of a  $5$-face from Lemma~\ref{rmk:frozen-5face}. 
    }
\end{figure}

\begin{lemma}
\label{lem:config2-deg3}
Let $(G, \alpha, \beta)$ be a minimal counterexample. Then $G$ does not contain four $5$-faces $f_1, \dots, f_4$ such that $\bigcap_{i=1}^4 V(f_i)$ contains a vertex $v$ of degree $5$ and every vertex in $\bigcup_{i=1}^4 V(f_i) - \{v\}$ has degree $3$ (see Figure~\ref{fig:config2-deg3}). \end{lemma}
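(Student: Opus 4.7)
The plan is to derive a contradiction with the minimality of $(G,\alpha,\beta)$ by constructing a recolouring sequence from $\alpha$ to $\beta$ in $G$, assuming the configuration exists. First I would fix notation: label $v$'s neighbours $w_1,\dots,w_5$ in cyclic order around $v$ so that, for each $i\in\{1,2,3,4\}$, the face $f_i$ has boundary $v,w_i,a_i,b_i,w_{i+1}$ (with $a_i,b_i$ the two interior vertices of $f_i$), and write $S:=\{v,w_1,\dots,w_5,a_1,b_1,\dots,a_4,b_4\}$. By hypothesis every vertex of $S\setminus\{v\}$ has degree exactly $3$. Crucially, $w_2,w_3,w_4$ have all three of their neighbours inside $S$ (e.g.\ $w_2$ is adjacent to $v,b_1,a_2$), so the external connections of $S$ go only through $w_1,w_5$ (via the fifth face at $v$) and through the $a_i,b_i$. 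Combined with the girth-$5$ condition, every vertex of $G\setminus S$ has at most one neighbour in $S$.

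The overall strategy is to apply the minimality of $(G,\alpha,\beta)$ to $G\setminus S$, obtaining a recolouring sequence $s'$ from $\alpha_{\restriction G\setminus S}$ to $\beta_{\restriction G\setminus S}$, and then to lift $s'$ to $G$ along the lines of the proof of Lemma~\ref{lem:5-face}. Specifically, each step of $s'$ recolouring a vertex $u\in G\setminus S$ to a colour $c$ will be preceded, if necessary, by recolourings inside $S$ that shift $\sigma(z)$ off $c$, where $z$ is the unique neighbour of $u$ in $S$; a final clean-up will then bring the colouring of $S$ to $\beta_{\restriction S}$ using the intermediate-colour-$4$ device from the end of the proof of Lemma~\ref{lem:5-face}.

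The technical heart of the argument is the following observation, which makes the lifting work: \emph{in every proper $4$-colouring of $G$, at least one vertex of $S$ is free.} To prove it, I would apply Lemma~\ref{rmk:frozen-5face} to each face $f_i$ (permissible because $d(v)=5\geq 4$): if every vertex of $f_i$ is locked, then the unique locked pattern forces $w_i$ and $w_{i+1}$ to share a colour. If this happens simultaneously for $i=1,2,3,4$, then all of $w_1,\dots,w_5$ share a single colour and only two colours appear in $N[v]$, so $v$ itself is free; otherwise some $f_i$ already contains a free vertex.

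From this observation one extracts two sub-claims in direct analogy with Claims~\ref{claim:unfreeze} and~\ref{claim:extend}: (i) for every $4$-colouring $\sigma$ of $G$ and every $x\in S$, recolourings confined to $S$ suffice to unlock $x$; and (ii) for every $\sigma$, every boundary vertex $z\in S$ and every forbidden colour $a$, recolourings confined to $S$ reach a colouring with $\sigma'(z)\neq a$ while keeping some free vertex in $S$ available for the next boundary shift. I would prove these by case analysis, repeatedly propagating unlocking along paths of degree-$3$ vertices via Lemma~\ref{lem:extra} and pinning down how each $f_i$ can fail to be free via Lemma~\ref{rmk:frozen-5face}. The hard part will be precisely this case analysis: the configuration is markedly larger and more intricate than a single $5$-face, the four faces interact through the hub $v$, and the delicate case in which the four faces are simultaneously ``maximally locked'' so that $v$ is the only free vertex of $S$ will require propagating recolourings outward through several spokes. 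Once (i) and (ii) are established, the lift of $s'$ and the final reconciliation of the colouring on $S$ with $\beta_{\restriction S}$ proceed exactly as in Lemma~\ref{lem:5-face}, yielding the desired contradiction.
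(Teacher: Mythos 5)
Your overall framework (reduce on $G \setminus S$, lift the sequence, clean up at the end) matches the paper's, and your ``key observation'' is correct and corresponds to Claim~\ref{claim:config2-1}. However, there is a genuine gap in the foundation of your lifting step: the assertion that \emph{every vertex of $G\setminus S$ has at most one neighbour in $S$} is false. Girth $5$ only forbids an external vertex $u$ from having two neighbours in $S$ at distance $\leq 2$ in $G-u$. For example, $a_1$ and $a_2$ are at distance $3$ (via $b_1, w_2$), so an external $u$ adjacent to both would create only a $5$-cycle; similarly $u$ could simultaneously be adjacent to vertices from several of the ``spokes''. The paper explicitly acknowledges this in its proof of the analogous step: after partitioning the non-central boundary of $H$ into four blocks $Z_1,\dots,Z_4$, it observes that an external $w$ can have at most one neighbour \emph{in each $Z_i$}, i.e.\ up to four neighbours in $H$. (Your intuition is being imported from the single-face Lemma~\ref{lem:5-face}, where the ``one neighbour'' fact is true, but it does not survive the passage to four faces glued at a degree-$5$ hub.)

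This is not a cosmetic issue: it invalidates your plan of handling one boundary conflict at a time with a Claim~\ref{claim:extend}-style statement, because a single recolouring of an external $w$ may require its up-to-four $H$-neighbours to avoid $w$'s new colour \emph{simultaneously}, and shifting one of them could relock or recolour another. This is precisely why the paper introduces extra machinery beyond analogues of Claims~\ref{claim:unfreeze} and~\ref{claim:extend}: it first shows that $v$'s colour can always be changed (Claim~\ref{claim:config2-2}), then uses planarity to align the colour of $v$ with one of $z'_1, z_4$ (Claim~\ref{claim:config2-3}), which creates a durable free pivot $u_2$, and from there propagates outward to unlock \emph{all} of $w$'s $H$-neighbours at once, ordered so that earlier unlockings are not destroyed (Claim~\ref{claim:config2-4}). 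Your proposal does not anticipate any of this. Relatedly, the final reconciliation of $\beta'_{\restriction H}$ with $\beta_{\restriction H}$ in the paper is considerably more delicate than in Lemma~\ref{lem:5-face} — it splits into cases on whether $v$ is coloured $4$, uses a swap-property $(\star)$, and has to account for possible extra edges inside $H$ — so ``proceeds exactly as in Lemma~\ref{lem:5-face}'' undersells that step as well.
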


\begin{figure}[!ht]
\centering
\begin{tikzpicture}[thick,cnode/.style = {inner sep = 1pt, draw, circle},rotate=36]
\node[cnode,label=above:{$v$}] (v) at (0,0) {5};
\node[cnode,label=above:{$u_1$}] (v1) at (90:1) {3};
\node[cnode] (v2) at (162:1) {3};
\node[cnode] (v3) at (234:1) {3};
\node[cnode] (v4) at (306:1) {3};
\node[cnode,label=above:{$u_5$}] (v5) at (18:1) {3};
\node[cnode,label=above:{$z_1$}] (v12) at ($ (v1)+(126:1) $) {3};
\node[cnode,label=left:{$z'_1$}] (v12') at ($ (v2)+(126:1) $) {3};
\node[cnode] (v23) at ($ (v2)+(198:1) $) {3};
\node[cnode] (v23') at ($ (v3)+(198:1) $) {3};
\node[cnode] (v34) at ($ (v3)+(270:1) $) {3};
\node[cnode] (v34') at ($ (v4)+(270:1) $) {3};
\node[cnode,label=right:{$z_4$}] (v45) at ($ (v4)+(342:1) $) {3};
\node[cnode,label=above:{$z'_4$}] (v45') at ($ (v5)+(342:1) $) {3};
\draw (v2) -- (v) -- (v1) -- (v12) -- (v12') -- (v2) -- (v23) -- (v23') -- (v3) -- (v34) -- (v34') -- (v4) -- (v45) -- (v45') -- (v5) -- (v) -- (v4);
\draw (v) -- (v3);
\node at (126:1.25) {$f_1$};
\node at (198:1.25) {$f_2$};
\node at (270:1.25) {$f_3$};
\node at (342:1.25) {$f_4$};
\end{tikzpicture}
\caption{Configuration from Lemma~\ref{lem:config2-deg3}. Numbers indicate degree.}
\label{fig:config2-deg3}
\end{figure}

\begin{proof}
Suppose otherwise, and let $H = G[\bigcup_{i=1}^4 V(f_i)]$. For $i \in \{1, \dots, 4\}$ write $V(f_i) = \{v, u_i, u_{i+1}, z_i, z'_i\}$, where $u_i$ denotes the neighbour of $v$ incident with $f_{i-1}$ and $f_i$, and $u_iz_i, z_iz'_i,z'_i u_{i+1} \in E(f_i)$. 

By the minimality of $(G, \alpha, \beta)$, there is a sequence $s'$ of recolourings from $\alpha_{\restriction (G - H)}$ to $\beta_{\restriction (G - H)}$. To reach a contradiction, we show how to lift $s'$ to a sequence from $\alpha$ to $\beta$ in $G$. Fix a $4$-colouring $\sigma$ of $G$. 


\begin{claim}
\label{claim:config2-1}
For $i = 1, 2$, there is a $\sigma$-free vertex incident to $f_i$, $f_{i+1}$ or $f_{i+2}$. 
\end{claim}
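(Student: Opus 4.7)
\textbf{Proof plan for Claim~\ref{claim:config2-1}.} I will argue by contradiction. Suppose that every vertex of $V(f_i) \cup V(f_{i+1}) \cup V(f_{i+2})$ is $\sigma$-locked; the goal is to show that then $v$ itself cannot actually be $\sigma$-locked, which is already a contradiction since $v$ lies on each of these three faces.

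The key tool is Lemma~\ref{rmk:frozen-5face}. For each $j \in \{i, i+1, i+2\}$, the face $f_j$ has exactly one vertex of degree different from $3$, namely $v$, and $d(v)=5 \geq 4$. Hence Lemma~\ref{rmk:frozen-5face} applies to $f_j$ with $v$ playing the special role, and the colouring on $G[V(f_j) \cup N(f_j) - N(v)]$ is the unique ``locked'' pattern depicted in Figure~\ref{fig:frozen-5-face}. Inspecting that figure, one reads off that the two neighbours of $v$ lying on $f_j$ -- namely $u_j$ and $u_{j+1}$ -- carry the same colour. Chaining this equality across the three consecutive faces via the shared vertices $u_{i+1}$ and $u_{i+2}$ yields
\[
\sigma(u_i) \;=\; \sigma(u_{i+1}) \;=\; \sigma(u_{i+2}) \;=\; \sigma(u_{i+3}).
\]

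Writing $a$ for this common colour, four of the five neighbours of $v$ take colour $a$; only one neighbour of $v$ (either $u_5$ when $i=1$, or $u_1$ when $i=2$) is not forced. Thus the closed neighbourhood $N[v] = \{v, u_1, \dots, u_5\}$ displays at most three distinct colours, namely $\sigma(v)$, $a$, and the colour of the single remaining neighbour. This contradicts the assumption that $v$ is $\sigma$-locked, which would require all four colours in $N[v]$.

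There is no real obstacle: the only thing to notice is that $d(v)=5$ leaves precisely one ``free slot'' once Lemma~\ref{rmk:frozen-5face} pins down the colours on three overlapping faces, and this single slot is not enough room to accommodate the two missing colours that locking $v$ would require.
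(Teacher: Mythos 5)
Your proof is correct and follows essentially the same route as the paper: assume all vertices on the three consecutive faces are locked, invoke Lemma~\ref{rmk:frozen-5face} to force $\sigma(u_i)=\sigma(u_{i+1})=\sigma(u_{i+2})=\sigma(u_{i+3})$, and conclude that $v$ cannot see all four colours, contradicting that $v$ is locked. The only cosmetic difference is that you count colours in the closed neighbourhood $N[v]$ (at most three) whereas the paper counts colours in the open neighbourhood $N(v)$ (at most two); these are equivalent.
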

\begin{proof}[Proof of Claim \ref{claim:config2-1}.]

Suppose by contradiction that all vertices incident to $f_i, f_{i+1}$ and $f_{i+2}$ are $\sigma$-locked. By Lemma \ref{rmk:frozen-5face}, $\sigma(u_i) = \sigma(u_{i+1})$, $\sigma(u_{i+1}) = \sigma(u_{i+2})$ and $\sigma(u_{i+2}) = \sigma(u_{i+3})$. Thus only two colours appear in the neighbourhood of $v$, i.e.,  $v$ is $\sigma$-free, which is a contradiction. 
\end{proof}

\begin{claim}
\label{claim:config2-2}
There is a recolouring sequence from $\sigma$ to some colouring $\sigma'$ such that $\sigma_{\restriction (G - H)} = \sigma'_{\restriction (G - H)}$ and $\sigma(v) \neq \sigma'(v)$.
\end{claim}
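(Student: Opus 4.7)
The plan is to split into two cases based on whether $v$ is $\sigma$-free or $\sigma$-locked. If $v$ is already $\sigma$-free, we directly recolour $v$ to some colour distinct from $\sigma(v)$, and the resulting colouring is the required $\sigma'$.

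Otherwise, $v$ is $\sigma$-locked; since $d(v)=5$ while only four colours are available, the three colours in $\{1,2,3,4\}\setminus\{\sigma(v)\}$ all appear among $\sigma(u_1),\ldots,\sigma(u_5)$, and by pigeonhole some colour $c$ appears on a unique $u_j$. By the girth-$5$ condition and the face structure, $H-v$ is the $13$-vertex path $Q = u_1 z_1 z_1' u_2 z_2 z_2' u_3 z_3 z_3' u_4 z_4 z_4' u_5$, with every vertex of $Q$ of degree $3$ in $G$. By Claim~\ref{claim:config2-1} (applied with $i=1$ if $j \leqslant 4$ and with $i=2$ if $j \geqslant 2$), there is a $\sigma$-free vertex in the $10$-vertex subpath of $Q$ containing $u_j$; choose $w$ to be the one nearest $u_j$ along $Q$, so that the subpath $P$ of $Q$ from $w$ to $u_j$ has all interior vertices $\sigma$-locked of degree $3$. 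Lemma~\ref{lem:extra} then provides a recolouring sequence touching only vertices of $P \subset Q$ that unlocks $u_j$. We then recolour $u_j$ to a colour $\neq c$ (possible, since $u_j$ has only three neighbours). After this operation colour $c$ no longer appears in the neighbourhood of $v$, making $v$ free, and a final recolouring of $v$ produces $\sigma'$ with $\sigma'(v) \neq \sigma(v)$; the whole sequence modifies only vertices of $H$, as required.

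The main obstacle will be to ensure that the propagation along $P$ does not reintroduce colour $c$ on another $u_i \in V(P)$, which would keep $v$ locked after recolouring $u_j$. Handling this should rely on the careful choice of $w$ close to $u_j$, exploiting both free vertices provided by Claim~\ref{claim:config2-1} (one in $V(f_1)\cup V(f_2)\cup V(f_3)$ and another in $V(f_2)\cup V(f_3)\cup V(f_4)$), together with a short case analysis depending on the relative positions of $w$ and $u_j$ along $Q$. As a fallback, one can argue that after the propagation and the recolouring of $u_j$, some colour (possibly different from $c$) is necessarily missing from the neighbourhood of $v$, which still suffices to make $v$ free and complete the argument.
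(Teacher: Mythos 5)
Your approach diverges from the paper's, and it has a real unresolved gap that you yourself flag but do not close. You propose to find a colour $c$ appearing on a unique neighbour $u_j$ of $v$, unlock $u_j$ via Lemma~\ref{lem:extra} along a path $P$ in $H-v$, recolour $u_j$ away from $c$, and then recolour $v$. The problem is exactly what you identify: when the free vertex $w$ given by Claim~\ref{claim:config2-1} is far from $u_j$ (for example $w=u_1$ and $u_j=u_4$), the path $P$ necessarily passes through other $u_i$'s, and Lemma~\ref{lem:extra} recolours every interior vertex of $P$ with no control over the new colour. Those recolourings can deposit colour $c$ (or indeed $\sigma(v)$) onto some other $u_i$, so after recolouring $u_j$ the vertex $v$ may still see all four colours. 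Your ``careful choice of $w$'' cannot in general keep $P$ disjoint from $\{u_1,\dots,u_5\}\setminus\{u_j\}$, and the ``fallback'' claim --- that after the propagation some colour is necessarily missing from $N[v]$ --- is stated without argument and is not obviously true. In its current form the proposal is an outline, not a proof.

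The paper's proof avoids this difficulty by controlling specific neighbours of $v$ rather than chasing a uniquely coloured one. It first uses Claim~\ref{claim:config2-1} together with Lemma~\ref{lem:extra} to make both $u_1$ and $u_5$ free (noting that freeing $u_5$ cannot re-lock $u_1$, since $u_1$ has no neighbour in $V(f_2\cup f_3\cup f_4)-\{v\}$), and then deduces $\sigma(u_1)=\sigma(u_5)$. If $f_2$ and $f_3$ are both locked, Lemma~\ref{rmk:frozen-5face} forces $\sigma(u_2)=\sigma(u_3)=\sigma(u_4)$, so only two colours appear around $v$ and $v$ is already free. Otherwise a free vertex incident to $f_2$ or $f_3$ lets one free some $u_j$ with $j\in\{2,3,4\}$, leaving at most two colours on the still-locked neighbours of $v$, after which the three free neighbours $u_1,u_5,u_j$ can be adjusted so that $v$ can be recoloured. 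This explicit control of $u_1$, $u_5$ and one middle $u_j$ is precisely what your pigeonhole/propagation plan is missing. If you want to salvage your route, you would need a quantitative argument bounding how the Lemma~\ref{lem:extra} propagation can affect $\{\sigma(u_i)\}$, which seems at least as much work as the paper's direct argument.
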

\begin{proof}[Proof of Claim \ref{claim:config2-2}.]
By Claim~\ref{claim:config2-1}, there is a $\sigma$-free vertex $h$ incident  with $f_1$, $f_2$, or $f_3$. Let $P$ denote a shortest path whose internal vertices entirely lie in $V(f_1 \cup f_2 \cup f_3) - \{v\}$ and having ends $h$ and $u_1$. Applying Lemma~\ref{lem:extra} to $P$, we can assume $u_1$ is $\sigma$-free. Similarly, we can assume that $u_5$ is $\sigma$-free. (Note that this keeps $u_1$ free since $u_1$ has no neighbour in $V(f_2\cup f_3\cup f_4)-\{v\}$). This, in turn, yields $\sigma(u_1) = \sigma(u_5)$. Indeed, if they have different colours and none can be recoloured into the other, they can both be recoloured into $\{1,2,3,4\}\setminus\{\sigma(u_1),\sigma(u_5),\sigma(v)\}$. To prove the claim, we must change the colour of $v$. We consider two cases.

\medskip

\noindent
\textbf{Case 1:}  $f_2$ and $f_3$ are both $\sigma$-locked. \\
By Lemma \ref{rmk:frozen-5face},  $\sigma(u_2) = \sigma(u_3) = \sigma(u_4)$. Consequently, only two colours appear in the neighbourhood of $v$ since $\sigma(u_1)=\sigma(u_5)$ by assumption, i.e., $v$ is $\sigma$-free so we simply recolour $v$.      
\medskip

\noindent
\textbf{Case 2:} there is a $\sigma$-free vertex incident to either $f_2$ or $f_3$. \\ 
 By applying Lemma~\ref{lem:extra} as before, we can assume that $u_j$ is $\sigma$-free for some $j \in \{2, 3, 4\}$. Let $C$ be the set of colours appearing on the $\sigma$-locked neighbours of $v$. Then $|C| \leq 2$ by our observations this far and so there is a colour $a \in [4] \setminus (C \cup \{\sigma(v)\})$. We simply recolour the three $\sigma$-free neighbours $u_1$, $u_5$ and $u_j$ of $v$ if their colour is $a$ and finish the sequence by recolouring $v$ to $a$. This completes Case 2 and the proof of the claim. 
\end{proof}

\begin{claim}
\label{claim:config2-3}
There is a recolouring sequence from $\sigma$ to some colouring $\sigma'$ such that $\sigma_{\restriction (G - H)} = \sigma'_{\restriction (G - H)}$ and $\sigma'(v)\in\{\sigma'(z'_1),\sigma'(z_4)\}$. 
\end{claim}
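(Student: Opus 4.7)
The plan is to exploit the flexibility in the proof of Claim~\ref{claim:config2-2} together with a secondary strategy acting on $z'_1$ and $z_4$ directly. If $\sigma(v)\in\{\sigma(z'_1),\sigma(z_4)\}$, we set $\sigma'=\sigma$ and are done. Otherwise, without loss of generality, $\sigma(v)=1$ and $\sigma(z'_1),\sigma(z_4)\in\{2,3,4\}$. I would then proceed via two complementary strategies and analyze when each applies.

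The first strategy is to revisit Claim~\ref{claim:config2-2} and choose the recolouring target for $v$ carefully. In that proof, after the unlocking stage, $v$ ends up with three free neighbours and the remaining (locked) neighbours have colours forming a set $C$ with $|C|\leq 2$, and $v$ may be recoloured to any element of $[4]\setminus(C\cup\{\sigma(v)\})$. If either $\sigma(z'_1)$ or $\sigma(z_4)$ lies in this set, we pick it as the new colour of $v$, first recolouring $z'_1$ (resp. $z_4$) off that colour if necessary (since these are degree-$3$ vertices lying on a face whose free vertex from Claim~\ref{claim:config2-1} can be propagated to unlock them via Lemma~\ref{lem:extra}). This succeeds unless $\{\sigma(z'_1),\sigma(z_4)\}\subseteq C$, which, combined with the fact that $\sigma(u_2)\neq\sigma(z'_1)$ and $\sigma(u_4)\neq\sigma(z_4)$, forces the locked colours around $v$ into a restricted pattern.

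The second strategy is, instead of moving $v$, to recolour $z'_1$ (or $z_4$) to $\sigma(v)=1$. Since $z'_1$ has degree $3$ with neighbours $u_2,z_1$ inside $H$ and a single external neighbour $w_1$ whose colour cannot change, and since $\sigma(u_2)\neq 1$ automatically (as $u_2$ is adjacent to $v$), the only obstructions are $\sigma(z_1)=1$ and $\sigma(w_1)=1$. The former can be removed by first unlocking and recolouring $z_1$ using a free vertex on $f_1$ (again via Lemma~\ref{lem:extra}), leaving only the structural obstruction $\sigma(w_1)=1$. The symmetric argument applies to $z_4$ with external neighbour $w_4$.

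The main obstacle will be handling the simultaneous failure of both strategies, namely when $\{\sigma(z'_1),\sigma(z_4)\}\subseteq C$ and additionally $\sigma(w_1)=\sigma(w_4)=1$. In this residual case, my plan is to first apply Claim~\ref{claim:config2-2} to change $\sigma(v)$ to some $a\neq 1$, obtaining a new colouring $\tilde\sigma$ with $\tilde\sigma(v)=a$ but with $\tilde\sigma(w_1)=\tilde\sigma(w_4)=1$ still holding (the $w_i$ are outside $H$, so untouched). Now the roles change: the obstruction for strategy (ii) vanishes (we can recolour $z'_1$ or $z_4$ to $a$ without conflict with $w_1$ or $w_4$ as long as $a\neq 1$), and I would verify that strategy (ii) now succeeds by a short case analysis on which colour $a$ was achieved. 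Keeping track of the limited interaction between $H$ and the fixed external colours $\sigma(w_1),\sigma(w_4)$ is the delicate bookkeeping I expect to be the most technical part.
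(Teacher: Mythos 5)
Your proposal has a genuine gap: you assert that $z'_1$ ``has degree $3$ with neighbours $u_2,z_1$ inside $H$ and a single external neighbour $w_1$ whose colour cannot change,'' and symmetrically for $z_4$ with $w_4$. But nothing in the hypotheses of Lemma~\ref{lem:config2-deg3} forces the third neighbour of $z'_1$ (or $z_4$) to lie outside $H$. The girth condition rules out $z_2,z'_2$ as the third neighbour of $z'_1$, but it does \emph{not} rule out $z_3,z'_3,z_4,z'_4$: for example $z'_1z_4$ would close a $5$-cycle $z'_1u_2vu_4z_4$, which is permitted. If that third neighbour lies inside $H$, its colour is not fixed under the lift and your analysis of the ``structural obstruction $\sigma(w_1)=1$'' breaks down — in particular, your residual case applies Claim~\ref{claim:config2-2} and then relies on $\tilde\sigma(w_1)=1$ ``still holding (the $w_i$ are outside $H$, so untouched),'' which is false whenever the third neighbour is $z_4,z'_4,z_3$ or $z'_3$.

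The paper deals with exactly this point: by planarity it cannot simultaneously happen that $z'_1$ is adjacent to $\{z_3,z'_3\}$ and $z_4$ is adjacent to $\{z_2,z'_2\}$ (the two chords would cross), so after swapping roles one may assume the third neighbour $x$ of $z'_1$ lies in $\{z_4,z'_4\}$ or outside $H$. Each of these remaining cases is then treated: $x\notin H$ is handled via Claim~\ref{claim:config2-2} (your strategy (i), used as a fallback); $x=z_4$ with $\sigma(x)=\sigma(v)$ is already the conclusion; and $x=z'_4$ with $\sigma(x)=\sigma(v)$ is fixed by observing that a vertex of $f_4$ becomes free so $z'_4$ can be recoloured away from $\sigma(v)$. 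Your two-strategy framework is in the same spirit as the paper's argument (recolour $z'_1$ towards $\sigma(v)$, using Claim~\ref{claim:config2-2} when the external colour blocks it), but without the planarity step and the case distinction on where $x$ lives, the proof is incomplete. A smaller imprecision: the obstruction $\sigma(z_1)=\sigma(v)$ is removable not because there is some free vertex on $f_1$ in general, but specifically because $\sigma(z_1)=\sigma(v)$ forces $u_1$ to be free, after which Lemma~\ref{lem:extra} applies to the length-one path $u_1z_1$; your phrasing leaves the existence of the needed free vertex unjustified.
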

\begin{proof}[Proof of Claim \ref{claim:config2-3}.]

Since $G$ is planar, there cannot be an edge between $z'_1$ and $\{z_3,z'_3\}$ and one between $z_4$ and $\{z_2,z'_2\}$. Up to exchanging $z'_1$ and $z_4$, assume that the former holds. 

Let $x$ be the neighbour of $z'_1$ which is not $z_1$ or $u_2$. By hypothesis $x\notin\{z_3,z'_3\}$, and since $G$ has girth 5, $x\notin\{z_2,z'_2\}$. If $x\notin H$, then by Claim~\ref{claim:config2-2}, we can assume that $\sigma(x) \neq \sigma(v)$. If $x=z_4$, then $\sigma(x)\neq\sigma(v)$ otherwise the result follows. If $x=z'_4$, we can also assume that $\sigma(x)\neq \sigma(v)$ otherwise $u_4$ is $\sigma$-free, and up to recolouring $u_4$, $x$ becomes free and we can recolour it. So we can assume that $\sigma(x) \ne \sigma(v)$.

We try to immediately recolour  $z'_1$ to colour $\sigma(v)$. If this is not possible, then either $\sigma(z_1)$ or $\sigma(u_2)$ is $\sigma(v)$, and as $u_2 v \in E(G)$,  $\sigma(z_1) = \sigma(v)$. Therefore $u_1$ is $\sigma$-free, and up to applying Lemma~\ref{lem:extra}, we can assume that $z_1$ is $\sigma$-free; so we recolour $z_1$ to a colour different from $\sigma(v)$ and then finally recolour $z'_1$ with $\sigma(v)$. The claim is proved.
\end{proof}

\begin{claim}
\label{claim:config2-4}
For every $w \in V(G - H)$, there is a recolouring sequence from $\sigma$ to some colouring $\sigma'$ such that $\sigma_{\restriction (G - H)} = \sigma'_{\restriction (G - H)}$ and every vertex in  $N_G(w) \cap V(H)$  is $\sigma'$-free.  
\end{claim}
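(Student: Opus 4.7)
The plan is first to describe the set $T := N_G(w) \cap V(H)$, then to show how to make a single vertex of $T$ free using the previous claims, and finally to combine these operations to treat all of $T$.

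I would begin by observing that $v$ has all its neighbours in $H$ (the $u_i$'s), and that each $u_i$ has degree $3$ with neighbours $v$, $z_i$, $z'_{i-1}$, all in $V(H)$. Therefore $T \subseteq \bigcup_{i=1}^{4}\{z_i, z'_i\}$. The girth-$5$ hypothesis moreover forbids $w$ from being adjacent to both $z_i$ and $z'_i$ (they share the edge $z_iz'_i$, creating a triangle) or to both $z'_i$ and $z_{i+1}$ (they share the common neighbour $u_{i+1}$, creating a $4$-cycle). In particular $|T| \leq 4$, with at most one representative per face and an independence condition between consecutive faces. If $T = \emptyset$ the conclusion is immediate, so assume $T$ is nonempty.

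To free an individual vertex $z \in T$, say $z = z_i$, I would aim to equalise the colours of two of its three neighbours $u_i, z'_i, w$. Since $\sigma(w)$ is fixed throughout (colours outside $H$ are preserved), the goal is to bring either $\sigma'(u_i)$ or $\sigma'(z'_i)$ to $\sigma(w)$, thereby leaving at most two colours on the neighbourhood of $z_i$ and hence a free colour for $z_i$. Whether either side is feasible depends on the current colours of the remaining neighbours of $u_i$ (namely $v$ and $z'_{i-1}$) and of $z'_i$ (namely $u_{i+1}$ and its external neighbour). I would use Claim~\ref{claim:config2-2}, and when needed Claim~\ref{claim:config2-3}, to first adjust $\sigma(v)$ to a colour compatible with one of the two options, then invoke Lemma~\ref{lem:extra} (anchored at a free vertex supplied by Claim~\ref{claim:config2-1}) to unlock and recolour the target vertex. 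A symmetric argument handles $z = z'_i$ with the roles of $u_{i+1}$ and $u_i$ swapped.

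Finally, for multiple members of $T$, I would process them face by face. Because distinct faces $f_i, f_j$ interact inside $H$ only through $v$ and, when $|i-j|=1$, through a single shared vertex $u_k$, modifications local to one face do not destroy the freeness already established on another, provided the shared vertices are not disturbed. The principal obstacle is exactly this coordination between two adjacent faces $f_i, f_{i+1}$ both contributing to $T$: the colour of the shared vertex $u_{i+1}$ and the colour of $v$ must simultaneously satisfy the constraints coming from both faces. I expect the structural restriction that $T$ omits at least one of $\{z'_i, z_{i+1}\}$ for each $i$, combined with the freedom afforded by Claim~\ref{claim:config2-3} in placing $\sigma(v)$ into a specific relation with $z'_1$ or $z_4$, to provide exactly the flexibility needed to decouple the face-local operations and to carry them out in some prescribed order without conflict.
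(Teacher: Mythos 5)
Your description of $T := N_G(w)\cap V(H)$ is incomplete: you write that each $u_i$ has its three neighbours $v$, $z_i$, $z'_{i-1}$ inside $V(H)$, but this is only true for $u_2, u_3, u_4$. The end vertices $u_1$ and $u_5$ each have exactly two neighbours in $H$ (namely $\{v, z_1\}$ and $\{v, z'_4\}$ respectively) and one neighbour outside $H$, so $u_1$ or $u_5$ may well lie in $T$. This is why the paper partitions the boundary into $Z_1=\{u_1,z_1,z'_1\}$, $Z_2=\{z_2,z'_2\}$, $Z_3=\{z_3,z'_3\}$, $Z_4=\{z_4,z'_4,u_5\}$ rather than just $\{z_i,z'_i\}$; your version silently drops two possible members of $T$.

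The more serious problem is your third paragraph. You correctly identify the crux — several neighbours of $w$ must be simultaneously free in the final colouring, and naive face-by-face unlocking can undo earlier work, with the shared vertices $u_{i+1}$ and $v$ as the potential culprits — but you then only write that you ``expect the structural restriction $\ldots$ to provide exactly the flexibility needed.'' That is a plan, not an argument; it is precisely the part that needs to be nailed down. The paper resolves it with a single, concrete device: invoking Claim~\ref{claim:config2-3} to force $\sigma(z'_1)=\sigma(v)$ makes $u_2$ permanently free as long as neither $z'_1$ nor $v$ is touched. The unlocking then runs along the boundary path of $H-v$ from the anchor $u_2$ towards each target $w_i$, processing $P_4, P_3, P_2, P_1$ in that order; since $P_4\supseteq P_3\supseteq P_2$ all lie on the side of $u_2$ opposite to $z'_1$, none of those recolourings disturb $z'_1$ or $v$, $u_2$ stays free, and vertices already freed are not re-locked by later steps because the later subpaths avoid their neighbourhoods. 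Your "equalise two colours in the neighbourhood" strategy for a single $z$ may also work locally, but without an analogue of the permanent anchor plus the ordered sweep you have no mechanism to guarantee simultaneity, which is the whole content of the claim.
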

\begin{proof}[Proof of Claim \ref{claim:config2-4}.]
Set $Z_1 = \{u_1, z_1, z'_1\}$, $Z_2 = \{ z_2, z'_2\}$, $Z_3 = \{ z_3, z'_3\}$ and $Z_4 = \{ z_4, z'_4, u_5\}$. Note that $N_H(w) \subseteq \bigcup_{i=1}^4 Z_i$; moreover, since $G$ has girth $5$, $w$ can have at most one neighbour in each $Z_i$ that we denote $w_i$. 

By Claim~\ref{claim:config2-3}, we can assume that $\sigma(z'_1) = \sigma(v)$, in turn implying $u_2$ is $\sigma$-free. For $i \in [4]$, let $P_i$ denote the path in $H$ with ends $u_2$ and $w_i$ and that does not contain $v$. We apply Lemma \ref{lem:extra} to subpaths of each $P_i$ (with $P_i$ playing the role of $P$) starting with $P_4$ and working our way backwards to $P_1$ to obtain a sequence of recolourings that unlocks each $w_i$. At each step, $u_2$ stays $\sigma$-free since $z'_1$ and $v$ keep their colour. Therefore, if $w_i$ have not become free, one can always find a subpath of $P_i$ ending at $w_i$ and satisfying the hypotheses of Lemma~\ref{lem:extra}. This implies the claim. 
\end{proof}

We can now finish the proof of Lemma~\ref{lem:config2-deg3}.  By the minimality of $(G, \alpha, \beta)$, there is a sequence $s'$ of recolourings from $\alpha_{\restriction (G - H)}$ to $\beta_{\restriction (G - H)}$. 

We lift $s'$ to a sequence from $\alpha$ to some colouring $\beta'$ of $G$ by, for each vertex $w \in V(G - H)$, recolouring each neighbour $w'$ of $w$ in $H$ whenever $w$ gets recoloured to the colour of $w'$; this is possible by Claim \ref{claim:config2-4} and gives that $\beta'_{\restriction (G - H)} = \beta_{\restriction (G - H)}$. We now conclude the proof by recolouring $\beta'$ to $\beta$ in a similar fashion to Lemma \ref{lem:5-face}. Note that the only possible vertices coloured $4$ are in $H$ since $\beta$ is a $3$-colouring.
 
We consider two cases depending on whether $v$ has colour $4$ or not. Denote by $z'_0$ (resp. $z_5$) the neighbour of $u_1$ (resp. $u_5$) not in $\{v,z_1\}$ (resp. $\{v,z'_4\}$). Note that $z_0'$ and $z_5$ are not in $H$ since otherwise $G$ would contain a $C_4$.
\medskip

\noindent
\textbf{Case 1:} $v$ has colour $1,2$ or $3$. \\
We first show how to recolour $H$ so that all the $u_i$'s get colour $4$. Assume that this is not the case, and that there exists an index $i$ such that $u_i$ has colour 1, 2 or 3. We prove that we can recolour $H$ in such a way that either the number of $u_i$'s coloured with 4 increases, or it does not decrease but the number of $z_i$ and $z'_i$ coloured with 4 decreases. 

If $z_i$ and $z'_{i-1}$ are not coloured 4, then one can directly recolour $u_i$ with 4, which concludes. 

Otherwise, assume by symmetry that $z'_{i-1}$ has colour $4$. Then let $j<i$ be the largest index such that $z'_{j-1}$ is not coloured $4$ (which exists since $z'_0$, which is not in $H$, has colour $1, 2$ or $3$). Now $z'_j$ has colour $4$, hence $z_j$ does not. Now $u_j$ can be recoloured to $4$, which increases the number of $u_\ell$'s coloured with $4$. So we can assume that $u_j$ is coloured $4$. Hence $z_j$ is free (since it has two neighbours coloured with $4$) and then can be recoloured with a colour distinct from $4$. So either we can immediately recolour $z'_j$ or recolour it after the recolouring of $z_j$. So we can recolour $H$ in such a way $z_j'$ is not coloured $4$, which does not change the number of vertices $u_\ell$ coloured with $4$, but the number of $z_\ell,z'_\ell$ coloured with $4$ decreased. 

By iterating this argument, we obtain a colouring of $H$ where colour $4$ lies exactly on the $u_i$'s. We now claim that the following $(\star)$ property holds: if $z_i,z_i'$ are both locked, we can flip the colours of $z_i$ and $z_i'$ by only recolouring vertices among $\{v,u_i,u_{i+1},z_i,z_i'\}$. Indeed, if that is the case, the neighbours of $z_i$ and $z'_i$ outside of $\{v,u_i,u_{i+1},z_i,z'_i\}$ have the same colour. So we can recolour $v$ with the colour of $z_i$, then recolour $u_i$ with a free colour, then $z_i$ with $4$, then $z_i'$ with the previous colour of $z_i$. Now, up to recolouring $v$ (which only has two colours in its neighbourhood) and freeing $u_i$, we can colour $z_i$ with the initial colour of $z_i'$ and put back colour 4 on $u_i$.

Now let us prove that for increasing $i \le 4$, we can colour $z_i,z_i'$ with their target colour. Note that these target colours do not appear on $u_i,u_{i+1}$ since they are coloured with 4 and $\beta$ is a 3-colouring. If they appear on another neighbour in $H$, it should be in $\{z_j,z'_j\}$ for some $j \ne i$. Moreover there is at most one edge between $\{z_i,z'_i\}$ and $\{z_j,z'_j\}$ since $G$ has girth $5$. Now, using property $(\star)$ if needed, we can recolour $z_j,z'_j$ so that they are not coloured anymore with the target colour of their neighbour in $\{z_i,z'_i\}$. Note that this happens only if $j>i$ since $\beta$ is a proper colouring and the vertices $z_j,z'_j$ with $j<i$ already have their final colour. In particular, these recolouring steps do not recolour the vertices $z_j,z'_j$ for $j<i$.

So we can assume that the final colour of $z_i$ (resp. $z'_i$) does not appear on its neighbourhood except maybe on $z'_i$ (resp. $z_i$). So we can directly colour $z_i,z'_i$ with their target colour unless both are locked and their target colours are then flipped. But then we apply again $(\star)$ to swap their colours.

We finally set $v$ to $\beta(v)$, then each $u_i$ to $\beta(u_i)$.

\medskip

\noindent
\textbf{Case 2:} $v$ has colour $4$. \\
We first recolour vertices to 4 as long as it is possible. We then claim that if $u_i$ and $u_{i+1}$ are locked, we can recolour them (together with $z_i,z'_i$) so that they have the same colour. Indeed, assume that $u_i$ is coloured with 1 and $u_{i+1}$ with 2. Note that both $z_i$ and $z'_i$ must have a neighbour with colour 4 by maximality of $\beta'$. Now either $z_i$ is coloured with 2 or $z'_i$ is coloured with 1 (since both cannot have colour 3). By symmetry we consider only the latter. Either $z_i$ is coloured 2 and we recolour it with 3 so that $u_i$ can be recoloured with 2, or $z_i$ is coloured 3, and we recolour it to 2, so that $z'_i$ can be recoloured to 3 and $u_{i+1}$ to 1. Afterwards, we recolour again vertices with 4 whenever possible. 

Therefore, if colours 1, 2 and 3 appear on locked $u_i$'s, it must be on $u_1,u_3$ and $u_5$. By maximality of $\beta'$, the vertices coloured with 4 among $\{z'_1,z_2,z'_3,z_4\}$ must dominate $\{z_1,z'_2,z_3,z'_4\}$. Due to planarity and girth constraints, this implies that at least three vertices among $\{z'_1,z_2,z'_3,z_4\}$ are coloured with 4. Therefore, either $u_2$ or $u_4$ (say $u_2$ by symmetry) sees only colour 4 in its neighbourhood. In that case, either $z_1$ sees two vertices coloured with 4, and we can recolour it, which unlocks $u_1$, or we recolour $u_2$ so that $z'_1$ becomes free, then recolour $z'_1$ and finally recolour $z_1$ with 4. In both cases, only $u_3$ and $u_5$ can be locked in the obtained colouring.

Therefore, we can assume that at most two colours appear on the locked $u_i$'s. Let $c$ be one of the remaining colours. Now, observe that all the $u_i$'s coloured with $c$ are unlocked, hence they can be recoloured. We may now recolour $v$ to $c$, so that we can apply Case~1.
\end{proof}


Let $G$ be a plane graph with girth $5$, let $v$ be a vertex of $G$ of degree $4$ and let $f$ be a $5$-face incident with $v$. A $5$-face $f' \not=f$ of $G$ is \emph{opposite to $f$ with respect to $v$} if $V(f) \cap V(f') = \{v\}$ (note that since $v$ has degree $4$ and $G$ has girth $5$, $f'$ is well defined). We say that a vertex $u$ incident with a $5$-face $f$ is \emph{bad for $f$} if either \begin{itemize}
\item $u$ has degree $3$,  or 
\item $u$ has degree $4$ and $f$ has an opposite $5$-face $f'$ with respect to $u$ such that each vertex in $V(f') - \{u\}$ is bad for $f'$. 
\end{itemize}

If $v$ is bad for $f$, we say that $(v,f)$ is a \emph{bad pair}. Observe that bad pairs are well-defined inductively, and that the definition gives a natural quasi-order on bad pairs. We slightly abuse terminology by saying that a vertex is \emph{bad} if it is bad for at least one of its incident faces. Bad vertices have the following property.

\begin{lemma}
\label{lem:bad-frozen}
Let $(G, \alpha, \beta)$ be a minimal counterexample. If $v$ is a bad vertex of $G$ for some $5$-face $f$ and has degree $4$, then $v$ is $\alpha$-frozen and both neighbours of $v$ which are not incident to $f$ are also $\alpha$-frozen and are coloured alike under $\alpha$.
\end{lemma}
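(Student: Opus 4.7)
My plan is to prove the lemma by strong induction on the depth of the recursive certificate of badness for the pair $(v,f)$. For the induction to go through I prove the stronger statement that \emph{if $(u,f^{*})$ is a bad pair with $d(u)=4$ and $f'$ denotes the opposite face of $f^{*}$ through $u$, then every vertex of $V(f')$ is $\alpha$-frozen and the two neighbours of $u$ on $f'$ share a colour under $\alpha$.} Since $v \in V(f')$ and the two neighbours of $v$ not incident with $f$ are precisely its two neighbours on $f'$, this yields the lemma.

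The base case (depth one) is when all four vertices of $V(f') \setminus \{u\}$ have degree three, so $f'$ is a very bad 5-face. Lemma~\ref{lem:5-face} then forces every vertex of $f'$ to be $\alpha$-frozen, and Lemma~\ref{rmk:frozen-5face} pins down the colouring depicted in Figure~\ref{fig:frozen-5-face}, in which the two $u$-neighbours on $f'$ share a colour. For the inductive step, some $u_i \in V(f') \setminus \{u\}$ has $d(u_i) = 4$; the pair $(u_i, f')$ has strictly smaller depth, so the inductive hypothesis gives that $u_i$ is $\alpha$-frozen with its two neighbours on its own opposite face sharing a colour under $\alpha$. Applying Lemma~\ref{lem:frozen4} to $u_i$ then propagates frozenness to all four of its neighbours, in particular to its two neighbours on $f'$. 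Using Lemma~\ref{lem:frozen3} to push frozenness through any degree-three vertices of $V(f') \setminus \{u\}$, and re-applying the inductive hypothesis to any further degree-four bad vertices on $f'$, we conclude that every vertex of $V(f')$ is $\alpha$-frozen.

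The main obstacle is the same-colour part of the inductive step. Each degree-four $u_i$ on $f'$ inherits from the inductive hypothesis the crucial extra datum that its two neighbours off $f'$ share a colour; combined with the locked condition at $u_i$, this pins down two of the three colours distinct from $c_{u_i}$ on the $f'$-neighbours of $u_i$. The degree-three vertices of $V(f') \setminus \{u\}$ yield, via their locked conditions, strong pairwise-distinctness constraints on consecutive colours along $f'$. I then plan to run the same pigeonhole argument that underlies Lemma~\ref{rmk:frozen-5face}: writing $V(f') = \{u, u_1, u_2, u_3, u_4\}$ in cyclic order, the enhanced locked conditions place all of $c_{u_1}, c_{u_2}, c_{u_3}, c_{u_4}$ in $\{1,2,3,4\} \setminus \{c_u\}$, and every equality of the form $c_{u_i} = c_{u_{i+1}}$ or $c_{u_i} = c_{u_{i+2}}$ (indices taken along $f'$) is excluded by proper colouring or by the locked condition at an adjacent degree-three vertex, so the only admissible collision among four values in three colours is $c_{u_1} = c_{u_4}$. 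The delicate part is verifying that this dichotomy survives when some of the $u_i$ only satisfy the weaker degree-four locked condition (so locked\-ness alone does not pin down their $f'$-neighbours' colours); here the inherited same-colour property supplied by induction at $f_{u_i}$ is exactly the ingredient that restores the pigeonhole, and a short case analysis on the cyclic degree pattern along $f'$ completes the argument.
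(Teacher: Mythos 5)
Your proposal is correct and takes essentially the same route as the paper: an induction over the well-founded order on bad pairs, using Lemma~\ref{lem:5-face} together with Lemma~\ref{rmk:frozen-5face} for the base case, Lemmas~\ref{lem:frozen3} and~\ref{lem:frozen4} (fed by the inductive hypothesis' same-colour datum at degree-$4$ vertices) to propagate frozenness around the opposite face $f'$, and then the five-cycle pigeonhole to force the two $f'$-neighbours of $v$ to share a colour. The paper packages the intermediate step slightly differently --- it records, for every $z\neq v$ on $f'$, that the two $f'$-neighbours of $z$ receive distinct colours, and only then concludes --- but this is exactly the ``enhanced locked conditions'' you describe, so the two arguments coincide in substance.
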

\begin{proof}
 We prove the result by induction on the pairs $(v,f)$ such that $v$ has degree 4 and is bad for $f$. Let $v$ be a vertex of degree $4$ which is bad for the face $f$. 

Let us assume that all the bad pairs smaller than $(v,f)$ satisfy the statement of the lemma. Let $f'$ be the face opposite of $f$ at $v$. 
\begin{claim}
All the vertices of $f'$ are frozen, and if $z\neq v$ is incident to $f'$, then its two neighbours in $f'$ have different colours.
\end{claim}
\begin{proof}
In order to prove the first part of this claim, we show that:
\begin{itemize}
    \item $f'$ contains at least one frozen vertex; 
    \item if $z$ is a frozen vertex of $f'$ different from $v$, then its neighbours are also frozen.
\end{itemize}
If all the vertices of $f'$ different from $v$ have degree 3, then $f'$ is very bad and the result of the claim follows immediately from Lemma~\ref{lem:5-face}. On the other hand, vertices of degree $4$ different from $v$ and incident to $f'$ are frozen using the induction hypothesis, hence the first point holds.

In order to prove the second point, consider a frozen vertex $z$ incident to $f'$ and different from $v$. If $z$ has degree $3$, then the neighbours of $z$ are frozen by Lemma~\ref{lem:frozen3} and in particular its neighbours on $f'$ have different colours. If $z$ has degree $4$, then since we know that $z$ is bad for $f'$, by the induction hypothesis, the two neighbours of $z$ which are not incident to $f'$ are frozen and have the same colour. By Lemma~\ref{lem:frozen4}, this implies that all the neighbours of $z$ are frozen. Moreover, since the neighbours of $z$ outside of $f'$ have the same colour, its neighbours incident to $f'$ have different colours.
\end{proof}

Hence, the claim above shows that $v$ is frozen. Moreover, the two neighbours of $v$ incident to $f'$ must have the same colour since otherwise it is not possible to colour the vertices of $f'$ with $4$ colours such that the property of the claim above holds. This proves the induction step and finishes the proof of the lemma.
\end{proof}

We now have all the tools needed to forbid the structures presented in Subsection~\ref{sub:defs}.

\begin{lemma}
\label{lem:config1}
The graph $G$ does not contain a 5-face $f$ such that every vertex on $f$ is bad for $f$.
\end{lemma}
\begin{proof}
Let us assume by contradiction that $G$ has a face $f$ satisfying this property.
By Lemmas~\ref{lem:5-face} and~\ref{rmk:frozen-5face}, $f$ contains a vertex of degree 4, and all of them are frozen by Lemma~\ref{lem:bad-frozen}. Moreover, as before, if $z$ is a frozen vertex of $f$, then its neighbours are also frozen. This follows from Lemma~\ref{lem:frozen3} if $z$ has degree $3$, and from Lemma~\ref{lem:frozen4} and Lemma~\ref{lem:bad-frozen} if $z$ has degree $4$. Let $u$ and $v$ be two vertices incident to $f$. We will show that $u$ and $v$ have different colours. This is trivially true if $u$ and $v$ are adjacent. If they are not adjacent, then they have a common neighbour $w$ incident to $f$. If $w$ has degree $3$, then since $w$ is locked all its neighbours have different colours, and in particular $\alpha(u) \neq \alpha(v)$. If $w$ has degree $4$, then by Lemma~\ref{lem:bad-frozen}, its two neighbours not incident to $f$ have the same colour, and consequently $u$ and $v$ must have different colours. Hence, all the vertices incident to $f$ have different colours in $\alpha$, which contradicts the assumption that $\alpha$ is a $4$-colouring of $G$.
\end{proof}

An important consequence of Lemma~\ref{lem:config1} is stated in the following corollary.
\begin{corollary}
\label{cor:2bad}
Let $v$ be a vertex of $G$ of degree $4$, incident to the faces $f_1,f_2,f_3,f_4$ (in clockwise order). Then $v$ cannot be bad for both $f_i$ and $f_{i+2\mod 4}$. In particular, every bad 4-vertex is bad for at most two faces.
\end{corollary}
\begin{proof}
Let us assume by contradiction that $v$ is bad for both $f_1$ and $f_3$. Since $v$ is bad for $f_1$, then by definition all the vertices of $f_3$ different from $v$ are bad for $f_3$. Since $v$ is also bad for $f_3$, we obtain a contradiction with Lemma~\ref{lem:config1}.
\end{proof}

\begin{lemma}
\label{lem:config2}
The graph $G$ does not contain a 5-vertex $v$ adjacent to four 5-faces $f_1,f_2,f_3,f_4$ such that, for each $i$, all vertices incident to $f_i$ except $v$ are bad for $f_i$.
\end{lemma}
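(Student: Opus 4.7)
The plan is to derive a contradiction by showing that $v$ is simultaneously $\alpha$-frozen and $\alpha$-free. Label the neighbours of $v$ as $u_1, \dots, u_5$ in cyclic order so that, for each $j \in \{1,\dots,4\}$, $V(f_j) = \{v, u_j, a_j, b_j, u_{j+1}\}$ with facial walk $v\,u_j\,a_j\,b_j\,u_{j+1}$.

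The first step is to show that every vertex of every $f_j$, including $v$, is $\alpha$-frozen. By Lemma~\ref{lem:config2-deg3}, not all non-$v$ vertices on the four faces have degree $3$, so some non-$v$ vertex $u$ on some $f_{j_0}$ has degree exactly $4$ (since bad vertices have degree at most $4$). Lemma~\ref{lem:bad-frozen} then yields that $u$ is $\alpha$-frozen and that its two neighbours not on $f_{j_0}$ are frozen with the same colour, so Lemma~\ref{lem:frozen4} forces all four neighbours of $u$ to be frozen. Frozenness now propagates around $f_{j_0}$: whenever we reach a frozen vertex $z \in V(f_{j_0})\setminus\{v\}$, we apply Lemma~\ref{lem:frozen3} if $d(z)=3$, or Lemma~\ref{lem:bad-frozen} combined with Lemma~\ref{lem:frozen4} if $d(z)=4$ (using the hypothesis that $z$ is bad for $f_{j_0}$), to conclude that both of $z$'s $f_{j_0}$-neighbours are frozen. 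Walking around the $5$-cycle bounding $f_{j_0}$, we obtain that all of its vertices — in particular $v$, $u_{j_0}$ and $u_{j_0+1}$ — are $\alpha$-frozen. The same argument, starting from the freshly frozen $u_{j_0}$ and $u_{j_0+1}$ and using that they are bad for the neighbouring faces, propagates frozenness to $f_{j_0-1}$ and $f_{j_0+1}$, and then to the remaining face.

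Once every vertex of each $f_j$ is frozen, I would reprise the common-neighbour colouring argument from the proof of Lemma~\ref{lem:config1}. For any two vertices $x,y \in V(f_j)$ sharing a common non-$v$ neighbour $w$ on $f_j$, the frozen vertex $w$ is $\alpha$-locked; together with Lemma~\ref{lem:bad-frozen} when $d(w)=4$ (the two neighbours of $w$ outside $f_j$ share a colour, so the two $f_j$-neighbours of $w$ must take the remaining two colours) and with the trivial degree-$3$ case, this yields $\alpha(x)\neq\alpha(y)$. Applied around the $5$-cycle $v\,u_j\,a_j\,b_j\,u_{j+1}$, it forces $\alpha(v), \alpha(a_j), \alpha(b_j)$ to be three distinct colours and each of $\alpha(u_j)$, $\alpha(u_{j+1})$ to be the unique fourth colour, so $\alpha(u_j)=\alpha(u_{j+1})$. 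Iterating over $j \in \{1,\dots,4\}$ gives $\alpha(u_1)=\alpha(u_2)=\alpha(u_3)=\alpha(u_4)=\alpha(u_5)$. Hence only two colours appear in the closed neighbourhood of $v$, so $v$ is $\alpha$-free, contradicting the fact that $v$ is $\alpha$-frozen (and thus $\alpha$-locked).

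The main obstacle, I expect, lies in carefully walking the frozenness propagation across all four faces, especially when some $u_j$ has degree $4$: we then need $u_j$ to be bad for one of its incident faces in order to invoke Lemma~\ref{lem:bad-frozen}, which is precisely supplied by the hypothesis of Lemma~\ref{lem:config2}.
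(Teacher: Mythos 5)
Your proof is correct and follows essentially the same route as the paper's: dispose of the all-degree-$3$ case via Lemma~\ref{lem:config2-deg3}, then use Lemma~\ref{lem:bad-frozen} on a degree-$4$ vertex and propagate frozenness across all four faces via Lemmas~\ref{lem:frozen3} and~\ref{lem:frozen4}, and finally contradict the fact that $v$, being frozen, must be locked. The paper closes by citing Claim~\ref{claim:config2-1}, whose proof (via Lemma~\ref{rmk:frozen-5face}) technically assumes all non-$v$ vertices of the face have degree $3$; your direct common-neighbour colouring argument forcing $\alpha(u_1)=\cdots=\alpha(u_5)$ is a slightly more explicit way to reach the same conclusion and cleanly sidesteps that imprecision.
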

\begin{proof}
Let us assume by contradiction that $G$ contains a vertex $v$ with this property. Let $H$ be the graph induced by the vertices incident to the faces $f_i$. The case where all the vertices of $H$ different from $v$ have degree $3$ has already been handled in Lemma~\ref{lem:config2-deg3}. Hence we can assume that at least one vertex incident to $v_i$ has degree $4$. By Lemma~\ref{lem:bad-frozen} we know that this vertex is frozen, and so are its neighbours. Moreover, by Remark~\ref{lem:frozen3}, all the neighbours of a frozen vertex of degree $3$ are also frozen. Hence, it follows that all the vertices of $H$ are frozen. However, this implies that the colouring $\alpha$ is locked on $H$, which is not possible by Claim~\ref{claim:config2-1}.
\end{proof}

\subsection{Discharging}

We may now reach a contradiction using a discharging argument. We first give an initial weight of $2d(v)-6$ to each vertex $v$ and $\ell(f)-6$ to each face $f$. According to Euler's formula, the total weight is  
\[\sum_{v\in V(G)} (2d(v)-6)+\sum_{f\in F(G)}(\ell(f)-6)=-12.\]
We then redistribute the weights according to the following rules :
\begin{itemize}
\item Every 4-vertex which is bad for at least two faces gives 1 to their opposite faces.
\item Every 4-vertex which is bad for only one face $f$ gives 1 to its opposite face, and then splits its remaining weight equally among its two remaining incident 5-faces.
\item Every 4-vertex which is not bad and every $5^+$-vertex $v$ gives 1 to each incident 5-face $f$ such that all the vertices of $f\setminus\{v\}$ are bad, and $\frac12$ to every other 5-face. 
\end{itemize}

We finally reach a contradiction by showing that every vertex and face ends up with non-negative weight after this process. This contradicts that the sum of the weights is negative and proves Theorem~\ref{thm:4col-girth5}. First recall that every vertex of $G$ has degree at least 3, and that 3-vertices and $6^+$-faces have non-negative initial weight and do not lose any weight. Therefore, we only have to consider $4^+$-vertices and $5$-faces.

We may easily take care of vertices of degree $d\geqslant 6$: they give at most 1 to each incident face, hence they end up with at least
\[2d-6-d=d-6\geqslant 0.\]

\begin{lemma}
Every 5-vertex ends up with non-negative weight.
\end{lemma}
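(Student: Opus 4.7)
The plan is a very short calculation whose only non-trivial ingredient is Lemma~\ref{lem:config2}. A 5-vertex $v$ starts with weight $2\cdot 5 - 6 = 4$, and inspecting the three discharging rules shows that a $5^+$-vertex only ever sends charge to its incident 5-faces: nothing is sent to incident $6^+$-faces or kept by $v$. So it suffices to bound the total charge that $v$ gives to its (at most $5$) incident 5-faces by $4$.

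First I would introduce the relevant counts. Let $a$ be the number of 5-faces $f$ incident to $v$ such that every vertex of $V(f)\setminus\{v\}$ is bad for $f$, and let $b$ be the number of remaining 5-faces incident to $v$. Since $v$ has degree $5$, we have $a+b\leq 5$, and by the third rule the total charge sent out by $v$ equals exactly $a + \tfrac{1}{2}b$.

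The single outside input needed is the bound $a \leq 3$. This is immediate from Lemma~\ref{lem:config2}: if $a \geq 4$, then four of the 5-faces around $v$ would exhibit precisely the forbidden configuration of that lemma, a contradiction. Combining $a\leq 3$ with $a+b\leq 5$ gives
\[
a + \tfrac{1}{2}b \;\leq\; a + \tfrac{1}{2}(5-a) \;=\; \tfrac{a+5}{2} \;\leq\; 4,
\]
so the final weight of $v$ is at least $4 - 4 = 0$, as required.

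There is no real obstacle here; all the difficulty has already been absorbed into Lemma~\ref{lem:config2}, and what remains is a one-line arithmetic verification. It is perhaps worth noting that the bound is tight at $(a,b) = (3,2)$, which confirms that Lemma~\ref{lem:config2} is exactly the correct reducible configuration to rule out in order to close the discharging argument for 5-vertices.
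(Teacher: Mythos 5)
Your argument is correct and matches the paper's proof exactly: both invoke Lemma~\ref{lem:config2} to conclude that at most three incident 5-faces receive charge $1$, while the rest receive at most $\frac12$, and then verify $4 - 3\cdot 1 - 2\cdot\frac12 = 0$. You have merely written out the arithmetic bookkeeping in more detail.
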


\begin{proof}
By Lemma~\ref{lem:config2}, we know that every 5-vertex gives 1 to at most 3 faces and at most $\frac12$ to every other face, hence their final weight is at least $4-3\times 1-2\times\frac12=0$.
\end{proof}

\begin{lemma}
Every 4-vertex ends up with non-negative weight.
\end{lemma}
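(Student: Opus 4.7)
The initial weight of a 4-vertex $v$ is $2d(v)-6 = 2$, so the plan is to verify that $v$ gives away at most $2$ units of charge. By Corollary~\ref{cor:2bad}, $v$ is bad for at most two faces, which naturally splits the argument into three cases: $v$ is bad for exactly two faces, bad for exactly one face, or not bad at all.

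The two ``bad'' cases are routine. If $v$ is bad for two faces $f_1, f_2$, these are adjacent by Corollary~\ref{cor:2bad}, so the faces opposite to them at $v$ are the remaining two incident faces of $v$, and by the definition of a bad pair both of these opposite faces are 5-faces. Thus $v$ gives $1+1=2$ and ends with weight $0$. If $v$ is bad for exactly one face $f$, then $v$ first sends $1$ to the (5-face) opposite of $f$ at $v$ and then distributes the remaining unit equally among its other incident 5-faces, so the total charge given is again at most $2$.

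The main case is when $v$ is not bad for any face. Let $m$ be the number of incident 5-faces of $v$, and let $k$ be the number of those 5-faces $f$ for which every vertex in $V(f)\setminus\{v\}$ is bad for $f$. Under the last discharging rule, the total charge that $v$ gives out is exactly $k + \tfrac{1}{2}(m-k) = \tfrac{1}{2}(m+k)$, so the problem reduces to proving $m+k\leq 4$.

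The step I expect to carry the whole argument is the following observation: for each of the $k$ faces counted above, its opposite face at $v$ cannot be a 5-face. Indeed, if $f$ is such a face and its opposite $f'$ at $v$ were a 5-face, then by the definition of ``bad for'', $v$ itself would be bad for $f'$, contradicting the assumption that $v$ is not bad. Hence the ``opposite at $v$'' map injectively sends the $k$ faces in question into the set of non-5-faces incident to $v$, which has size exactly $4-m$. This yields $k \leq 4-m$, i.e.\ $m+k\leq 4$, and so $v$ ends with non-negative weight. The only delicate point I foresee is the interpretation of the single-bad-face rule when one of the ``two remaining incident 5-faces'' is actually a face of length $\geq 6$; under the natural reading that the leftover unit is split only among the actual incident 5-faces (or kept if there are none), the bound of $2$ on the total discharge still holds.
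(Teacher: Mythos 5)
Your proof is correct and rests on the same key observation as the paper's: the opposite of a face that receives $1$ under the third rule cannot be a 5-face, since otherwise $v$ would be bad for it. Your accounting via $m$ and $k$ (with $k\leq 4-m$ from the injectivity of the opposite map) is a mild repackaging of the paper's direct pairing of $\{f_1,f_3\}$ and $\{f_2,f_4\}$, each pair receiving at most $1$.
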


\begin{proof}
Let $v$ be a 4-vertex and $f_1,f_2,f_3,f_4$ be the faces incident to $v$. If $v$ is bad, then by Corollary~\ref{cor:2bad}, it is bad for at most two faces. Then either $v$ is bad for two faces and loses $2$ by the first rule, or it is bad for one face, and distributes all its weight by the second rule. In both cases, bad 4-vertices end up with weight 0.

We may thus assume that $v$ is not bad. If $v$ gives 1 to an incident face, say $f_1$, then $f_3$ cannot have length 5 otherwise $v$ would be bad for $f_3$, and $v$ would not give any weight to $f_3$. Otherwise, $f$ gives at most $2\times \frac12$ to $f_1$ and $f_3$. Therefore, $v$ gives at most 1 to $\{f_1,f_3\}$, and at most 1 to $\{f_2,f_4\}$ by symmetry. Then the final weight of $v$ is at least $2-2\times 1=0$. This completes the proof of the lemma. 
\end{proof}

\begin{lemma}
Every 5-face receives a weight of at least 1.
\end{lemma}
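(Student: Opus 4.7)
The plan is to show that each 5-face $f$ of $G$ receives total weight at least $1$ during the discharging process. Let $X$ denote the set of vertices on $f$ which are \emph{not} bad for $f$. Since every 3-vertex of $f$ is automatically bad for $f$, and a 5+-vertex is (by definition) never bad for any face, $X$ consists of every 5+-vertex of $f$ together with some of its 4-vertices. By Lemma~\ref{lem:config1}, $X$ is non-empty.

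The heart of the proof is to show that each $v\in X$ sends at least $\frac12$ to $f$, and that $v$ sends exactly $1$ to $f$ as soon as every other vertex of $f$ is bad for $f$. For a 5+-vertex this is immediate from the third discharging rule. For a 4-vertex $u\in X$, let $h$ be the opposite face of $f$ with respect to $u$ and $g_1,g_2$ the two side faces. By Corollary~\ref{cor:2bad}, $u$ cannot be simultaneously bad for $\{f,h\}$, nor for $\{g_1,g_2\}$ (the two opposite pairs at $u$). Going through the remaining cases: if $u$ is not bad at all, it gives $\frac12$ to $f$ (it cannot give $1$, since that would require all of $V(f)\setminus\{u\}$ to be bad for $f$, which by definition would force $u$ to be bad for $h$, contradicting $u$ being non-bad); if $u$ is bad only for a side face, then $f$ is itself a side face of that face with respect to $u$, so $u$ gives $\frac12$ to $f$; and if $u$ is bad for $h$ (alone or together with one of $g_1,g_2$), then $f$ is the opposite of $h$ with respect to $u$, so $u$ gives $1$ to $f$.

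The crucial observation is that whenever $V(f)\setminus\{u\}$ is entirely bad for $f$, the recursive definition of a bad pair forces $u$ to be bad for $h$, and hence $u$ contributes exactly $1$ to $f$; the analogous statement for 5+-vertices is built directly into the third discharging rule. I then split on $|X|$. If $|X|=1$, the unique vertex $v\in X$ is a 4- or 5+-vertex with every other vertex of $f$ bad for $f$, so the observation above yields contribution $1$ from $v$ alone. If $|X|\geq 2$, two distinct vertices of $f$ each send at least $\frac12$ to $f$, summing to $1$.

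The main obstacle is the case analysis for 4-vertices in $X$, where the two uses of the word \emph{bad} (``bad for $f$'' versus ``bad for some face'') must be carefully distinguished and where Corollary~\ref{cor:2bad} is used in an essential way to eliminate the opposite-pair configurations that would otherwise leave a 4-vertex of $X$ contributing $0$ to $f$.
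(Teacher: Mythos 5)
Your overall strategy is the same as the paper's: show that every vertex of $f$ that is not bad for $f$ sends at least $\frac12$ to $f$, then handle separately the case where $f$ has exactly one such vertex. However, your ``crucial observation'' has a genuine gap. You claim that whenever $V(f)\setminus\{u\}$ is entirely bad for $f$ (with $u$ a $4$-vertex), the recursive definition forces $u$ to be bad for $h$, the face opposite $f$ at $u$. This is only true when $h$ is itself a $5$-face: the notion ``bad for $h$'' is not even defined otherwise, since a bad pair $(u,h)$ requires $h$ to be a $5$-face. When $h$ has length at least $6$, nothing forces $u$ to be bad for $h$, and your argument for the case $|X|=1$ breaks down.

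The conclusion you want is still true, but reaching it requires the extra case split that the paper performs. When $|X|=1$ and $h$ is a $5$-face, your reasoning is fine: $u$ is bad for $h$ and so sends $1$ to $f$ by the first or second rule. When $h$ has length at least $6$, one argues differently: either $u$ is not bad at all, in which case the third rule sends $1$ to $f$ because all of $V(f)\setminus\{u\}$ is bad for $f$; or $u$ is bad, hence bad for exactly one side face $g_i$ (not both, by Corollary~\ref{cor:2bad}, and not $h$ since $h$ is not a $5$-face), and then the second rule sends $1$ to the other side face $g_j$ and splits the remaining weight $1$ among the remaining incident $5$-faces, of which $f$ is the only one since $h$ has length at least $6$. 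In both sub-cases $f$ receives $1$, but this is precisely the piece of the argument you elided. (The same flaw also infects your parenthetical remark that a non-bad $4$-vertex ``cannot give $1$'' to $f$ -- in fact it can, when $h$ is not a $5$-face -- but there it is harmless because a contribution of $1$ only strengthens your bound.)
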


\begin{proof}
Let $f$ be a 5-face. By Lemma~\ref{lem:config1}, $f$ contains at most four bad vertices for $f$. Let $v$ be a vertex on $f$ which is not bad for $f$. If $v$ has degree at least $5$ or is a 4-vertex which is not bad, then it gives at least $\frac12$ to $f$ by the third rule. If $v$ has degree 4 and is bad, let $f,f_1,f',f_2$ be the faces incident to $v$ in clockwise order. Observe that if $v$ is bad for $f'$, then it gives 1 to $f$ since $f$ is opposite to $f'$ with respect to $v$. Otherwise, since $v$ is bad but not bad for $f$ by construction, then $v$ is bad for $f_1$ or $f_2$ (but not both since $f_1$ and $f_2$ are opposite with respect to $v$, see Corollary~\ref{cor:2bad}). Then the second rule applies, and $f$ 
receives at least $\frac12$ from $v$. 

Therefore, every vertex on $f$ which is not bad for $f$ gives at least $\frac12$ to $f$. We may thus assume that $f$ has only one such vertex $v$. In that case, $f$ receives 1 from $v$ by the third rule unless $v$ is a bad 4-vertex. In that case, again let $f,f_1,f',f_2$ be the faces incident to $v$ in clockwise order. If $f'$ has length 5, then since $v$ is the only vertex in $f$ which is not bad for $f$, it implies that $v$ is bad for $f'$ and thus gives 1 to $f$ by the first or second rule since $f$ is a face opposite to $f'$ with respect to $v$. Otherwise, $f'$ has length at least 6, and $v$ is bad for $f_1$ or $f_2$ (but not both, again by Corollary~\ref{cor:2bad}). In that case, $f$ 
receives also 1 by the second rule. This completes the proof of the lemma.
\end{proof}

\bibliographystyle{abbrv}
\bibliography{biblio.bib}

\begin{thebibliography}{10}

\bibitem{BartierBH21}
V.~Bartier, N.~Bousquet, and M.~Heinrich.
\newblock Recoloring graphs of treewidth 2.
\newblock {\em Discrete Mathematics}, 344(12):112553, 2021.

\bibitem{BonamyB18}
M.~Bonamy and N.~Bousquet.
\newblock Recoloring graphs via tree decompositions.
\newblock {\em Eur. J. Comb.}, 69:200--213, 2018.

\bibitem{BonamyJ12}
M.~Bonamy, M.~Johnson, I.~Lignos, V.~Patel, and D.~Paulusma.
\newblock {Reconfiguration graphs for vertex colourings of chordal and chordal
  bipartite graphs}.
\newblock {\em Journal of Combinatorial Optimization}, pages 1--12, 2012.

\bibitem{Borodin}
O.~Borodin.
\newblock Solving the {K}otzig and {G}r{\"u}nbaum problems on the separability
  of a cycle in planar graphs.
\newblock {\em Mat. Zametki}, 46:9--12, 1989.

\bibitem{BousquetB19}
N.~Bousquet and V.~Bartier.
\newblock Linear transformations between colorings in chordal graphs.
\newblock In {\em 27th Annual European Symposium on Algorithms, {ESA} 2019},
  pages 24:1--24:15, 2019.

\bibitem{BousquetH19+}
N.~Bousquet and M.~Heinrich.
\newblock A polynomial version of {C}ereceda's conjecture.
\newblock {\em arXiv preprint arXiv:1903.05619}, 2019.

\bibitem{BousquetP16}
N.~Bousquet and G.~Perarnau.
\newblock Fast recoloring of sparse graphs.
\newblock {\em European Journal of Combinatorics}, 52:1--11, 2016.

\bibitem{Cereceda}
L.~Cereceda.
\newblock {\em {Mixing Graph Colourings}}.
\newblock PhD thesis, London School of Economics and Political Science, 2007.

\bibitem{Cereceda09}
L.~Cereceda, J.~van~den Heuvel, and M.~Johnson.
\newblock {Mixing 3-colourings in bipartite graphs}.
\newblock {\em Eur. J. Comb.}, 30(7):1593--1606, 2009.

\bibitem{ChenDMPP19}
S.~Chen, M.~Delcourt, A.~Moitra, G.~Perarnau, and L.~Postle.
\newblock Improved bounds for randomly sampling colorings via linear
  programming.
\newblock In {\em Proceedings of the Thirtieth Annual {ACM-SIAM} Symposium on
  Discrete Algorithms, {SODA} 2019}, pages 2216--2234, 2019.

\bibitem{dvovrak2020update}
Z.~Dvo{\v{r}}{\'a}k and C.~Feghali.
\newblock An update on reconfiguring $10 $-colorings of planar graphs.
\newblock {\em The Electronic Journal of Combinatorics}, pages P4--51, 2020.

\bibitem{dvovrak2021thomassen}
Z.~Dvo{\v{r}}{\'a}k and C.~Feghali.
\newblock A {T}homassen-type method for planar graph recoloring.
\newblock {\em European Journal of Combinatorics}, 95:103319, 2021.

\bibitem{dyer2006randomly}
M.~Dyer, A.~D. Flaxman, A.~M. Frieze, and E.~Vigoda.
\newblock {Randomly coloring sparse random graphs with fewer colors than the
  maximum degree}.
\newblock {\em Random Structures \& Algorithms}, 29(4):450--465, 2006.

\bibitem{EsperetO16}
L.~Esperet and P.~Ochem.
\newblock Islands in graphs on surfaces.
\newblock {\em SIAM Journal on Discrete Mathematics}, 30(1):206--219, 2016.

\bibitem{Feghali19+}
C.~Feghali.
\newblock Reconfiguring colorings of graphs with bounded maximum average
  degree.
\newblock {\em Journal of Combinatorial Theory, Series B}, 147:133 -- 138,
  2021.

\bibitem{frieze2007survey}
A.~Frieze and E.~Vigoda.
\newblock A survey on the use of {M}arkov chains to randomly sample colourings.
\newblock {\em Oxford Lecture Series in Mathematics and its Applications},
  34:53, 2007.

\bibitem{grotzsch1959}
H.~Gr\"{o}tzsch.
\newblock A three-color set for three-circle-free nets on the sphere.
\newblock {\em science Z. Martin Luther Univ. Halle-Wittenberg, Math. Nat.
  Line}, 8:109--120, 1959.

\bibitem{Nishimura17}
N.~Nishimura.
\newblock Introduction to reconfiguration.
\newblock {\em Algorithms}, 11(4):52, 2018.

\bibitem{Heuvel13}
J.~van~den Heuvel.
\newblock {\em {The Complexity of change}}, page 409.
\newblock {Part of London Mathematical Society Lecture Note Series}. {S. R.
  Blackburn, S. Gerke, and M. Wildon} edition, 2013.

\end{thebibliography}

\end{document}